\documentclass[12pt]{article}


\setlength{\textwidth}{16cm}    
\setlength{\oddsidemargin}{0cm}  
\setlength{\evensidemargin}{0cm}

\usepackage{mathptmx} 
\DeclareSymbolFont{cmletters}{OML}{cmm}{m}{it}              
\DeclareSymbolFont{cmsymbols}{OMS}{cmsy}{m}{n}
\DeclareSymbolFont{cmlargesymbols}{OMX}{cmex}{m}{n}
\DeclareMathSymbol{\myjmath}{\mathord}{cmletters}{"7C}     \let\jmath\myjmath 
\DeclareMathSymbol{\myamalg}{\mathbin}{cmsymbols}{"71}     \let\amalg\myamalg
\DeclareMathSymbol{\mycoprod}{\mathop}{cmlargesymbols}{"60}\let\coprod\mycoprod
\DeclareMathSymbol{\myalpha}{\mathord}{cmletters}{"0B}     \let\alpha\myalpha 
\DeclareMathSymbol{\mybeta}{\mathord}{cmletters}{"0C}      \let\beta\mybeta
\DeclareMathSymbol{\mygamma}{\mathord}{cmletters}{"0D}     \let\gamma\mygamma
\DeclareMathSymbol{\mydelta}{\mathord}{cmletters}{"0E}     \let\delta\mydelta
\DeclareMathSymbol{\myepsilon}{\mathord}{cmletters}{"0F}   \let\epsilon\myepsilon
\DeclareMathSymbol{\myzeta}{\mathord}{cmletters}{"10}      \let\zeta\myzeta
\DeclareMathSymbol{\myeta}{\mathord}{cmletters}{"11}       \let\eta\myeta
\DeclareMathSymbol{\mytheta}{\mathord}{cmletters}{"12}     \let\theta\mytheta
\DeclareMathSymbol{\myiota}{\mathord}{cmletters}{"13}      \let\iota\myiota
\DeclareMathSymbol{\mykappa}{\mathord}{cmletters}{"14}     \let\kappa\mykappa
\DeclareMathSymbol{\mylambda}{\mathord}{cmletters}{"15}    \let\lambda\mylambda
\DeclareMathSymbol{\mymu}{\mathord}{cmletters}{"16}        \let\mu\mymu
\DeclareMathSymbol{\mynu}{\mathord}{cmletters}{"17}        \let\nu\mynu
\DeclareMathSymbol{\myxi}{\mathord}{cmletters}{"18}        \let\xi\myxi
\DeclareMathSymbol{\mypi}{\mathord}{cmletters}{"19}        \let\pi\mypi
\DeclareMathSymbol{\myrho}{\mathord}{cmletters}{"1A}       \let\rho\myrho
\DeclareMathSymbol{\mysigma}{\mathord}{cmletters}{"1B}     \let\sigma\mysigma
\DeclareMathSymbol{\mytau}{\mathord}{cmletters}{"1C}       \let\tau\mytau
\DeclareMathSymbol{\myupsilon}{\mathord}{cmletters}{"1D}   \let\upsilon\myupsilon
\DeclareMathSymbol{\myphi}{\mathord}{cmletters}{"1E}       \let\phi\myphi
\DeclareMathSymbol{\mychi}{\mathord}{cmletters}{"1F}       \let\chi\mychi
\DeclareMathSymbol{\mypsi}{\mathord}{cmletters}{"20}       \let\psi\mypsi
\DeclareMathSymbol{\myomega}{\mathord}{cmletters}{"21}     \let\omega\myomega
\DeclareMathSymbol{\myvarepsilon}{\mathord}{cmletters}{"22}\let\varepsilon\myvarepsilon
\DeclareMathSymbol{\myvartheta}{\mathord}{cmletters}{"23}  \let\vartheta\myvartheta
\DeclareMathSymbol{\myvarpi}{\mathord}{cmletters}{"24}     \let\varpi\myvarpi
\DeclareMathSymbol{\myvarrho}{\mathord}{cmletters}{"25}    \let\varrho\myvarrho
\DeclareMathSymbol{\myvarsigma}{\mathord}{cmletters}{"26}  \let\varsigma\myvarsigma
\DeclareMathSymbol{\myvarphi}{\mathord}{cmletters}{"27}    \let\varphi\myvarphi

\usepackage{amsthm,amsmath,amssymb,amscd,graphics,enumerate,latexsym,verbatim,stmaryrd,graphicx}
\usepackage[all]{xy}

\theoremstyle{plain}
\newtheorem{thm}{Theorem}[section]
\newtheorem{cor}[thm]{Corollary}
\newtheorem{lemma}[thm]{Lemma}
\newtheorem{prop}[thm]{Proposition}

\theoremstyle{definition}
\newtheorem{df}[thm]{Definition}

\newtheorem{pg}[thm]{}
\newtheorem{rem}[thm]{Remark}
\newtheorem{ex}[thm]{Example}


\DeclareMathOperator{\SL}{SL}
\DeclareMathOperator{\Quot}{Quot}

\DeclareMathOperator{\Spec}{Spec}
\DeclareMathOperator{\CSpec}{CSpec}
\DeclareMathOperator{\Sch}{Sch}
\DeclareMathOperator{\BSch}{BSch}
\DeclareMathOperator{\res}{res}
\DeclareMathOperator{\Hom}{Hom}

\DeclareMathOperator{\Rad}{Rad}
\DeclareMathOperator{\colim}{colim}

\DeclareMathOperator{\im}{im}

\DeclareMathOperator{\eq}{eq}

\DeclareMathOperator{\zero}{\textrm{\o}}

\def\0{{\bf 0}}

\def\F{{\mathbb F}}

\def\N{{\mathbb N}}
\def\P{{\mathbb P}}
\def\Q{{\mathbb Q}}
\def\R{{\mathbb R}}
\def\Z{{\mathbb Z}}

\def\cB{{\mathcal B}}
\def\cC{{\mathcal C}}
\def\cD{{\mathcal D}}
\def\cE{{\mathcal E}}
\def\cF{{\mathcal F}}
\def\cG{{\mathcal G}}
\def\cI{{\mathcal I}}

\def\cM{{\mathcal M}}

\def\cO{{\mathcal O}}

\def\cR{{\mathcal R}}

\def\cU{{\mathcal U}}

\def\fm{{\mathfrak m}}
\def\fp{{\mathfrak p}}
\def\fq{{\mathfrak q}}

\def\fB{{\mathfrak B}}

\def\Fun{{\F_1}}
\def\Funsq{{\F_{1^2}}}
\def\Funn{{\F_{1^n}}}

\def\int{\textup{int}}

\def\id{\textup{id}}

\def\blanc{-}
\def\bp{{\mathcal{B}lpr}}
\def\bpspaces{{\mathcal{L}\!oc\bp \mathcal{S}\!p}}
\def\canc{{\textup{canc}}}
\def\inv{{\textup{inv}}}
\def\proper{{\textup{prop}}}
\def\gl{{\textup{gl}}}
\def\SRings{{\mathcal{SR}ings}}
\def\Rings{{\mathcal{R}ings}}
\def\={\equiv}
\def\n={\equiv\hspace{-10,5pt}/\hspace{3,5pt}}

\newdir{ >}{{}*!/-5pt/@^{(}}\newcommand{\arincl}[1]{\ar@{ >->}@<-0,0ex>#1} 

\newcommand{\norm}[1]{\left| #1 \right|}
\newcommand{\gen}[1]{\langle #1 \rangle}
\newcommand{\igen}[1]{\langle #1 \rangle}
\newcommand{\bpquot}[2]{#1\!\sslash\!#2}
\newcommand{\bpgenquot}[2]{#1\!\sslash\!\gen{#2}}

\begin{document}

\title{\bf The geometry of blueprints\\[0,8cm] \large\bf Part I: Algebraic background and scheme theory}
\author{Oliver Lorscheid\footnote{Department of Mathematics, The City College of New York, 160 Convent Ave., New York NY 10031, USA., phone +1-212-650-5120, olorscheid@ccny.cuny.edu.}}
\date{}

\maketitle

\begin{abstract}
 In this paper, we introduce the category of blueprints, which is a category of algebraic objects that include both commutative (semi)rings and commutative monoids. This generalization allows a simultaneous treatment of ideals resp.\ congruences for rings and monoids and leads to a common scheme theory. In particular, it bridges the gap between usual schemes and $\Fun$-schemes (after Kato \cite{Kato94}, Deitmar \cite{Deitmar05} and Connes-Consani \cite{CC09}). Beside this unification, the category of blueprints contains new interesting objects as ``improved'' cyclotomic field extensions $\Funn$ of $\Fun$ and ``archimedean valuation rings''. It also yields a notion of semiring schemes.

 This first paper lays the foundation for subsequent projects, which are devoted to the following problems: Tits' idea of Chevalley groups over $\Fun$, congruence schemes, sheaf cohomology, $K$-theory and a unified view on analytic geometry over $\Fun$, adic spaces (after Huber), analytic spaces (after Berkovich) and tropical geometry.\\[5pt]
 \textbf{Keywords:} Monoids, semirings, $\Fun$-geometry, congruences, schemes.
\end{abstract}

\newpage
\begin{footnotesize}\tableofcontents\end{footnotesize}


\section*{Introduction}
\label{intro}\addcontentsline{toc}{section}{Introduction}

In the last decade, a series of suggestions were made what an $\Fun$-geometry should be. These approaches generalized scheme theory from different viewpoints: Soul\'e's approach via endowing a complex algebra with descent data to $\Fun$ (\cite{Soule04}), Durov's approach via monads (\cite{Durov07}), To\"en-Vaqui\'e's approach via functors on the category of multiplicative monoids (\cite{ToenVaquie08}), Borger's approach via endowing schemes with a $\Lambda$-structure (\cite{Borger09}), Haran's approach via categories of coherent sheaves (\cite{Haran07}, \cite{Haran09}), Kato, Deitmar and Connes-Consani's approach via a scheme theory associated to monoids (\cite{Kato94},\cite{Deitmar05},\cite{CC09}), or the approach of Connes-Consani resp.\ of L\'opez Pe\~na and the author via decomposition of schemes into tori (\cite{CC09}, \cite{LL09}). For a more comprehensive overview, see \cite{LL09b}.

What can be learned from all these generalizations of scheme theory is that the important operation for applying methods from algebraic geometry is the multiplicative structure while addition plays a subordinated role. Most of the named geometries are either too restrictive to cover an interesting class of objects or too artificial resp.\ too abstract to be applicable to standard methods from algebraic geometry. We follow a different approach in this text: a blueprint is a generalization of a commutative ring that is general enough to contain monoids and semirings, but close enough to conventional algebraic structures to develop a scheme theory in the vein of algebraic geometry after Grothendieck. To contrast the theory of blueprints to other generalizations of scheme theory, we avoid the name $\Fun$-scheme in this text.

While the author's initial interest was an investigation of Tits' idea of Chevalley groups over $\Fun$ (cf.\ \cite{Tits56}; it has been realized in \cite{L09}), it turned out that the theory of blueprints has multiple connections to other theories and might be suitable for a deeper understanding of other geometric concepts. We first outline the definition of a blueprint and a blue scheme, before we explain these ideas.

\subsubsection*{Summary of the theory}

For simplicity, we will restrict ourselves to proper blueprint with $0$, which we briefly call blueprints in this exposition. See Lemma \ref{lemma:semirings_and_blueprints} for the connection with the axiomatic approach in the main text.

A \emph{blueprint} is a pair $B=(A,R)$ of a semiring $R$ and a multiplicative subset $A$ of $R$ that contains $0$ and $1$ and that generates $R$ as a semiring. A \emph{morphism $f:B_1\to B_2$ of blueprints} is a semiring morphism $f_R:R_1\to R_2$ that restricts to a multiplicative map $f_A:A_1\to A_2$. An \emph{ideal} of a blueprint $B=(A,R)$ is a subset $I$ of $A$ such that there is a morphism $f:B\to B'$ with $f_A^{-1}(0)=I$. A \emph{prime ideal} is an ideal $\fp$ of $B$ such that $A-\fp$ is a multiplicative subset of $A$ that contains $1$. The localization of $B=(A,R)$ at a multiplicative subset $S\subset A$ that contains $1$ is the blueprint $S^{-1}B=(S^{-1}A,S^{-1}R)$ (cf.\ \cite{CC09} for the definition of $S^{-1}A$). If $S=A-\fp$ for a prime ideal $\fp$ of $B$, then we denote $S^{-1}B$ by $B_\fp$.

These properties make it possible to define the \emph{spectrum of a blueprint $B=(A,R)$} as the set $\Spec B$ of all prime ideals endowed with the topology generated by open subsets of the form $U_h=\{\fp\in\Spec B|h\notin \fp\}$ where $h$ ranges through $A$ together with a structure sheaf that is associated to the stalks $B_\fp$ at $\fp\in\Spec B$. A \emph{blue scheme} is a topological space $X$ together with a sheaf $\cO_X$ of blueprints that is locally isomorphic to spectra of blueprints. The usual definition of morphisms applies: morphisms of blue schemes are local morphisms of ``locally blueprinted spaces''.

While most of the theory generalizes scheme theory in an obvious way, a discrepancy arises in the fact that the global sections $\Gamma B=\Gamma(X,\cO_X)$ of $X=\Spec B$ are in general not equal to $B$. However, there is a canonical blueprint morphism $B\to \Gamma B$, which induces an isomorphism $\Spec\Gamma B\to \Spec B$ of blue schemes (Theorem \ref{thm:global_sections_are_global_blueprints}). This allows to prove properties of blue schemes analogous to usual schemes, e.g.\ that morphisms are locally algebraic.

We can consider a semiring $R$ as a blueprint $B=(R,R)$. This defines a fully faithful embedding of the category of semirings into the category of blueprints. In case of rings, a prime ideal of $R$ is the same as a prime ideal of $B$, and the spectrum of $R$ is the same as the spectrum of $B$. This leads to a fully faithful embedding of usual schemes into blue schemes. Additionally, we obtain the notion of a semiring scheme. Similarly, a monoid $A$ with a zero $0$ defines a blueprint $B=(A,A_\Z)$ where $A_\Z=\Z[A]/(0)$ and $(0)$ is the ideal generated by $0\in A\subset\Z[A]$. This leads to a fully faithful embedding of $\Fun$-schemes (after Kato, Deitmar and Connes-Consani, cf.\ \cite{CC09}) into blue schemes. Also the recent theory of sesquiads and their Zariski schemes (\cite{Deitmar11}) embeds fully faithfully into the theory of blue schemes.

\subsubsection*{Future prospects}

The paper lays the foundation for a number of projects, which we will describe in the following. Roughly speaking, these concern two main streams of ideas: making sense of Jacques Tits' dream of explaining thin geometries by an algebraic geometry over $\Fun$ (cf.\ \cite{Tits56}) and developing a unified view on algebraic geometry, $\Fun$-geometry, tropical geometry and analytic geometry (in the sense of Berkovich and Huber). 

Note that we forgo intentionally to give the term $\Fun$-geometry a precise meaning. It can be understood, in a more strict sense, to mean the class of monoidal schemes (after Kato, \cite{Kato94}, Deitmar, \cite{Deitmar05}, and Connes-Consani, \cite{CC09}), or, in a more lax way, to mean all kinds of geometric objects of a combinatorial nature. A posteriori, we can consider blue schemes over $\Fun$ as an $\Fun$-geometry. This is a broad class of objects, which includes next to monoidal schemes examples as the $\Fun$-model $\Spec\bigl(\bpgenquot{\Fun[X,Y]}{X^2+Y^2\=1}\bigr)$ of the unit circle or $\Fun$-models of moduli spaces like, for instance, the projective line 
\[
 \P^1_{\Fun}-\{0,1,\infty\} \quad = \quad \Spec\bigl(\ \bpgenquot{\Fun[X^{\pm1},Y^{\pm1}]}{X\=Y+1}\ \bigr)
\]
over $\Fun$ with $0$, $1$ and $\infty$ omitted. Examples of a more arithmetic nature are ``archimedean valuation rings'' (see Section \ref{section:archimedean_valuation_rings}) or the ``improved'' cyclotomic field extensions $\Funn$ (see Section \ref{section:cyclotomic_field_extensions}) whose base extension to $\Z$ is contained in a cyclotomic field extension of $\Q$. These properties are important for a number theory of blueprints that incorporates a Frobenious action in ``indefinite characteristics'' and allows us to consider archimedean and non-archimedean places simultaneously.

Most of the papers on $\Fun$-geometry address Chevalley groups over $\Fun$ in one or the other way (e.g.\ \cite{CC08,CC09,Deitmar05,Deitmar11,KapranovSmirnov,LL09,L09,L10,Manin95,Soule04,ToenVaquie08}). It became clear that the Weyl group $W$ has an important meaning for the postulated geometry of a Chevalley group $G$ over $\Fun$. In particular, there are many ways, in which one can give meaning to the formula $G(\Fun)=W$. But all explanations so far are of a rather formal nature and withstand an explanation of thin geometries in terms of an algebraic geometry over $\Fun$. In particular, while the role of the Weyl group is understood in a sufficient way, ``the structure of the terms of higher order is [still] more mysterious'' (\cite[p.\ 25]{CC08}).

It was indeed this problem that initially led to the definition of a blueprint: there was a need to give a meaning to expressions like $\Fun[X_1,\dotsc,X_4]/(X_1X_4=X_2X_3+1)$, e.a., which should play the role of the global sections of a Chevalley group over $\Fun$  (in this case $\SL_2$). With the theory of blue schemes, it is possible to explain the connection between the algebraic geometry of Chevalley groups and the combinatorial geometry of their Weyl groups. Moreover, the role of the additive structure will become clear, and it is possible to extend the group law of a Chevalley group to all blueprints, which, for instance, includes semirings. This might help to clarify the role of algebraic groups in tropical geometry along the connection that we mention below. All this will be explained in a subsequent paper, which is based on the methods developed in the present work. 

The latter project is of a more extensive nature and involves several aspects. First of all, it seems to be necessary to establish congruence schemes in the generality of blueprints. There are treatments of congruence schemes in a more restricted setting by Berkovich (for monoids, cf.\ \cite{Berkovich11}) and Deitmar (for sesquiads, cf.\ \cite{Deitmar11}). The important class of congruence spectra of semirings is contained in neither approach. Before one can extend definitions (which differ in the named approaches) to the more general setting of blueprints, one has to answer certain technical questions, which arise around the definition of the structure sheaf. In particular, the analogue of Theorem \ref{thm:global_sections_are_global_blueprints} is desirable. In this paper, we provide all algebraic background on congruences, but we postpone to give the definition of a congruence scheme till the theory is settled enough to be formulated.

With these tools at hands, a unified view on algebraic geometry, $\Fun$-geometry, analytic geometry and tropical geometry seems possible. We establish already in this paper a common approach to usual algebraic geometry, $\Fun$-geometry and an algebraic geometry associated to semirings. Congruence schemes will add to this unified approach, and will be of particular importance when we encounter semiring schemes. For example, tropical schemes as defined by Mikhalkin (cf.\ \cite{Mikhalkin}) seem to have a natural explanation as congruence schemes of tropical semirings.

Paugam introduced in \cite{Paugam09} a category whose objects are ordered semirings such that valuations of fields resp.\ rings are morphisms in this category. This allows us to recover Berkovich's and Huber's analytic spaces from homomorphism sets of Paugam's category. This category fits very naturally in the setting of (ordered) blueprints. Thus we can adopt Paugam's idea and extend it to the geometric setting that is developed in this paper. As a byproduct, this yields the notion of an analytic space over $\Fun$. Of particular interest is to compare this viewpoint with the theory of $\Fun$-analytic spaces of Berkovich (cf.\ \cite{Berkovich11}), which is based on congruence schemes. 

The explicit nature of blue schemes as topological spaces with structure sheaves makes it possible to extend sheaf theory of usual schemes and of monoidal schemes (as developed in \cite{CLS10}) to the more general setting of blue schemes. This enables us to give a unified approach towards $K$-theory for usual schemes and for $\Fun$-schemes. We expect that $K$-theory finds its most naturally explanation within the framework of congruence schemes since the particular role that idempotent elements play for congruence schemes promises to dissolve the gap between locally projective and locally free sheaves that occurs in sheaf theory of $\Fun$-schemes (cf.\ \cite{CLS10}). As a side result, this will define $K$-theory for tropical schemes and analytic spaces. Furthermore, the definition of \v{C}ech cohomology extends to blue schemes over $\Funsq$, which yields a definition for sheaf cohomology.

\subsubsection*{Content overview}

We outline the structure of this paper. In Section \ref{section:blueprints}, we introduce the notion of a blueprint and define certain full subcategories, including the subcategories of monoids (with and without zero), semirings and rings. We review the algebra behind $\Fun$-geometry after Deitmar (\cite{Deitmar05}, \cite{Deitmar11}) and Connes-Consani (\cite{CC09}) from the perspective of blueprints. We define the cyclotomic field extensions $\Funn$ of $\Fun$ and discuss archimedean valuation rings. We conclude the section with certain constructions: free blueprints, localizations, small limits and finite resp. directed colimits.

In Section \ref{section:congruences_and_ideals}, we introduce the notions of a congruence and of an ideal of a blueprint. We give alternative characterizations of congruences and ideals for certain subclasses of blueprints: ideals in blueprints with a zero have a simpler description than in the general context; congruences and ideals for monoids coincide with the usual definitions; blueprint ideals for rings coincide with usual ideals, and every congruence arises from an ideal (which is the reason why there is no need for the congruences in ring theory); in the case of semirings, a blueprint ideal is what is called a $k$-ideal in semiring theory. We conclude the section by introducing prime and maximal congruences and ideals together with some basic facts.

In Section \ref{section:blue_schemes}, we define locally blueprinted spaces and the spectrum of a blueprint. A blue scheme is a locally blueprinted space that is locally isomorphic to spectra of blueprints. We introduce the globalization $B\to \Gamma B$ where $\Gamma B$ is the blueprint of global sections of $\Spec B$, which is in general not an isomorphism (in contrast to usual scheme theory). We prove that the induced morphism $\Spec\Gamma B\to\Spec B$ is an isomorphism of blue schemes. This allows us to prove that the affine open subschemes of a blue scheme form a basis for its topology and that every morphism of blue schemes is locally algebraic. We define residue fields of points of a blue scheme. By restricting the category of blueprints to those that come from monoids or ring, we recover the notions of $\cM$-schemes (resp.\ $\cM_0$-schemes if we consider monoids with a zero) and usual schemes. Furthermore, we obtain the notion of a semiring scheme. We conclude this section with a comparison with $\Fun$-schemes in the sense of Connes and Consani (\cite{CC09}).

\subsubsection*{Acknowledgements}


I would like to thank all those who took the time to discuss my ideas on blueprints with me. This includes, but does not restrict to, the following persons: Vladimir Berkovich, Ethan Cotterill, Anton Deitmar, Paul Lescot, Javier L\'opez Pe\~na, Florian Pop, Matt Szczesny, Lucien Szpiro and Annette Werner. I would like to thank Gautam Chinta for receiving me as a research guest at CUNY and for his hospitality during this year.


\section{Blueprints} 
\label{section:blueprints}


\subsection{Definition}
\label{section:blueprint_definition}


 Throughout this text, a \emph{monoid} is a (multiplicatively written) commutative semi-group $A$ with a neutral element $1$. A \emph{morphism $f:A_1\to A_2$ of monoids} is a multiplicative map with $f(1)=1$. The semiring $\N[A]$ is defined as the (additive) semi-group of all finite formal sums $\sum a_i$, where the $a_i$ are elements in $A$, possibly occurring multiple times in the sum. The empty sum, denoted by $\zero$, is the neutral element for addition. The multiplication of $\N[A]$ is defined by linear extension of the multiplication of $A$, and $1$, considered as formal sum in $\N[A]$, is the neutral element for the multiplication of $\N[A]$. 

 In the following, we will consider relations $\cR \subset\N[A]\times\N[A]$ on $\N[A]$, which should be thought of equalities of sums of elements in $A$. Therefore, we write intuitively $\sum a_i\=\sum b_j$ for the condition that $(\sum a_i,\sum b_j)$ is an element of $\cR$.

\begin{df}
 A \emph{pre-addition for $A$} is a relation $\cR$ on $\N[A]$ that satisfies the following axioms for all $a_i,b_j,c_k,d_l\in A$.
 \begin{enumerate}
  \item[\textup{(A1)}] $\sum a_i\=\sum a_i$.                                                                          \textit{\hfill(reflexive)}     
  \item[\textup{(A2)}] If $\sum a_i\= \sum b_j$, then $\sum b_j\=\sum a_i$.                                           \textit{\hfill(symmetric)}     
  \item[\textup{(A3)}] If $\sum a_i\=\sum b_j$ and $\sum b_j\=\sum c_k$, then $\sum a_i\=\sum c_k$.                   \textit{\hfill(transitive)}    
  \item[\textup{(A4)}] If $\sum a_i\=\sum b_j$ and $\sum c_k\=\sum d_l$, then $\sum a_i+\sum c_k\=\sum b_j+\sum d_l$. \textit{\hfill(additive)}      
  \item[\textup{(A5)}] If $\sum a_i\=\sum b_j$ and $\sum c_k\=\sum d_l$, then $\sum a_i\cdot c_k\=\sum b_j\cdot d_l$. \textit{\hfill(multiplicative)}
  \end{enumerate}
 A pre-addition $\cR$ on $A$ may satisfy one of the following additional axioms. 
 \begin{enumerate}\addtocounter{enumi}{5}
  \item[\textup{(A6)}] If $\sum a_i+\sum c_k\=\sum b_j+\sum c_k$, then $\sum a_i\=\sum b_j$.                              \textit{\hfill(cancellative)}   
  \item[\textup{(A7)}] For every $a\in A$, there is a $b\in A$ such that $a+b\=\zero$.                                    \textit{\hfill(inverses)}      
  \item[\textup{(A8)}] There is an $a\in A$ such that $a\=\zero$.                                                          \textit{\hfill(zero)}          
  \item[\textup{(A9)}] If $a\= b$, then $a=b$ as elements of $A$.                                                        \textit{\hfill(proper)}        
 \end{enumerate}
 We call a pre-addition $\cR$ satisfying one of these additional axioms \emph{a cancellative pre-addition}, \emph{a pre-addition with inverses}, \emph{a pre-addition with a zero} resp.\ \emph{a proper pre-addition}.
\end{df}

Note that axioms \textup{(A1)}--\textup{(A3)} mean that a pre-addition is an equivalence relation on $\N[A]$, and that axioms \textup{(A4)} and \textup{(A5)} mean that a pre-addition is a sub-semiring of the product semiring $\N[A]\times\N[A]$.

\begin{df}
 A \emph{blueprint} is a pair $(A,\cR)$ that consists of a monoid $A$ and a pre-addition $\cR$ on $A$. A \emph{morphism $f:(A_1,\cR_1)\to(A_2,\cR_2)$ of blueprints} is a monoid morphism $\tilde f: A_1\to A_2$ such that the induced semiring morphism $\N[A_1]\times\N[A_1]\to\N[A_2]\times\N[A_2]$ maps $\cR_1$ to $\cR_2$. We denote the category of blueprints by $\bp$.

 A blueprint is \emph{cancellative}, \emph{with inverses}, \emph{with a zero} or \emph{proper} if $\cR$ is so.
\end{df}

If $A$ is a monoid and $\cR$ a pre-addition on $A$, we denote the blueprint $(A,\cR)$ by $B=\bpquot A\cR$. We often write $a\in B$ for $a\in A$ and $f(a)$ for $\tilde f(a)$. In this terminology, a morphism of blueprints is a multiplicative map $f: B_1\to B_2$ with $f(1)=1$ such that $\sum f(a_i)\=\sum f(b_j)$ for all $\sum a_i\=\sum b_j$. A third characterization is that a morphism $\bpquot {A_1}{\cR_1}\to\bpquot {A_2}{\cR_2}$ of blueprints is a semiring morphism $\cR_1\to\cR_2$ that maps $A_1\subset\cR_1$ (embedded as $a\= a$, by reflexivity) to $A_2\subset\cR_2$.

\begin{rem}
 Equivalently, one can define a blueprint to be a multiplicative map $f:A\to R$ from a monoid $A$ to a semiring $R$. The reader might find this viewpoint more accessible on a first reading and is encouraged to have a look at this alternative viewpoint in Section \ref{section:subsets_of_semirings}.
\end{rem}


\subsection{First properties}
\label{section:first_properties}

To begin with, we list first properties concerning the additional axioms \textup{(A6)}--\textup{(A9)}. Beside some immediate implications from the axioms, we provide constructions that close a blueprint with respect to these additional axioms.

Motivated by the next lemma, we define the following subcategories of $\bp$. The category $\bp_\canc$ is the full subcategory of $\bp$ whose objects are \emph{cancellative blueprints}; $\bp_\proper$ is the full subcategory of $\bp$ whose objects are \emph{proper blueprints}; $\bp_\inv$ is the full subcategory of $\bp$ whose objects are \emph{proper blueprints with inverses}; $\bp_0$ is the full subcategory of $\bp$ whose objects are \emph{proper blueprints with a zero}.

\begin{lemma}\label{lemma_bp-fact}
 Let $ B$ be a blueprint.
 \begin{enumerate}
  \item\label{bp-fact1}\label{part1} If $B$ is with inverses, then $ B$ is cancellative.
  \item\label{bp-fact2}\label{part2} The blueprint $ B$ is with inverses if and only if $1$ has an inverse, i.e.\ if there is an element $a\in B$ such that $1+a\=\zero$.
  \item\label{bp-fact3}\label{part3} If $1+a\=\zero$, then $a^2\=1$.
  \item\label{bp-fact4}\label{part4} If $B$ has a zero $e\=\zero$, then $\sum b_j+e\=\sum b_j$ for all $b_j\in B$ and $e\cdot b\= e$ for all $b\in B$.
  \item\label{bp-fact5}\label{part5} If $B$ is proper, then $B$ has at most one zero. 
  \item\label{bp-fact6}\label{part6} If $B$ is proper, then inverses are unique and $(-1)^2=1$ if $-1$ is the (unique) inverse of $1$. 
  \item\label{bp-fact7}\label{part7} If $B$ has unique inverses, then $B$ is proper.
 \end{enumerate}
 If $a\in B$ has a unique inverse, we denote it by $-a$, and if $B$ has a unique zero, we denote it by $0$.
\end{lemma}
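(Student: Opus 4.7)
The plan is to dispatch the seven assertions in order, since each builds on those that precede it.

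For (i), the key observation is that additive inverses extend from elements to finite formal sums: given inverses $d_k$ of the $c_k$, iterating (A4) on the relations $c_k+d_k\=\zero$ yields $\sum c_k+\sum d_k\=\zero$. Starting from $\sum a_i+\sum c_k\=\sum b_j+\sum c_k$, I add the reflexive relation $\sum d_k\=\sum d_k$ to both sides via (A4), then use transitivity together with the identity $\sum a_i+\zero=\sum a_i$ in $\N[A]$ to conclude $\sum a_i\=\sum b_j$. The ``only if'' direction of (ii) is immediate; for ``if'', given $1+a\=\zero$ and any $b\in B$, applying (A5) with the reflexive relation $b\= b$ yields $b+ab\=\zero$, so $ab$ inverts $b$.

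For (iii), multiplying $1+a\=\zero$ by the reflexive relation $a\= a$ via (A5) gives $a+a^2\=\zero$; combining with $1+a\=\zero$ by symmetry and transitivity produces $a+a^2\=1+a$, and cancelling $a$ via (i) (which applies since (ii) guarantees inverses) leaves $a^2\=1$. For (iv), the relation $\sum b_j+e\=\sum b_j$ follows from (A4) applied to $\sum b_j\=\sum b_j$ and $e\=\zero$, since $\zero$ is the neutral element for addition in $\N[A]$; for $e\cdot b\= e$, (A5) applied to $e\=\zero$ and $b\= b$ gives $e\cdot b\=\zero\cdot b=\zero$, and combining with $\zero\= e$ yields the claim.

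For (v), if $e_1$ and $e_2$ are both zeros, applying (iv) twice gives $e_1+e_2\= e_1$ and $e_1+e_2\= e_2$; transitivity then yields $e_1\= e_2$, and (A9) produces the equality $e_1=e_2$. For (vi), uniqueness of inverses follows by cancelling $a$ from $a+b_1\= a+b_2$ via (i) and invoking (A9); the identity $(-1)^2=1$ follows from (iii) applied with $a=-1$ together with (A9).

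Part (vii) is the most delicate step, because the conclusion requires passing from the formal equivalence $\=$ to genuine equality in $A$, and the uniqueness hypothesis is what enables this passage. Given $a\= b$ with $a,b\in A$, applying (A4) to $a\= b$ and $-a\=-a$, and then using transitivity, shows $b+(-a)\=\zero$, so $-a$ is also an inverse of $b$; uniqueness of the inverse of $b$ then forces the equality $-a=-b$ in $A$. Substituting this equality of monoid elements into $b+(-b)\=\zero$ turns it into $b+(-a)\=\zero$, which exhibits $b$ as an inverse of $-a$; since $a$ too is an inverse of $-a$, uniqueness of the inverse of $-a$ yields $a=b$. The main care required throughout this step is to keep the formal relation $\=$ and genuine equalities in $A$ cleanly separated, so that each appeal to the uniqueness hypothesis legitimately delivers an equality of monoid elements rather than merely another $\=$-relation.
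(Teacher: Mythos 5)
Your argument tracks the paper's proof almost step for step: (i) and (ii) are the paper's arguments, (iv), (v) and (vii) are the same ideas with only cosmetic variations (you use the additive half of (iv) for (v) where the paper uses the multiplicative half, and your (vii) makes a harmless detour through $-a=-b$ before concluding), and your (iii) replaces the paper's ``add $1$ and simplify'' step by cancelling $a$, which is legitimate because the hypothesis $1+a\=\zero$ gives inverses via (ii) and hence cancellativity via (i).

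The one step that does not survive a strict reading is (vi): you cancel $a$ from $a+b_1\=a+b_2$ ``via (i)'', but (i) has the hypothesis that $B$ is \emph{with inverses}, i.e.\ that every element of $B$ admits an inverse, whereas in (vi) the only standing assumption is properness and the only element known to possess an inverse is $a$ itself. So axiom (A6) is not available wholesale and the citation of (i) is not justified as written. The repair is immediate and is already contained in your own proof of (i): cancel the specific element $a$ by adding $b_1\=b_1$ to $a+b_1\=a+b_2$ and using $a+b_1\=\zero$ together with additivity and transitivity to get $b_1\=b_2$, whence $b_1=b_2$ by (A9). This local cancellation is exactly how the paper argues, so with that one-line adjustment your proof coincides with the paper's.
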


\begin{proof}
 Part \eqref{bp-fact1} follows immediately from the additivity of the pre-addition. The inclusion of \eqref{bp-fact2} is clear and the reverse inclusion follows by multiplying $1+a\=\zero$ by $b$, which yields the inverse $ab$ for $b$.

 Part \eqref{bp-fact3} is obtained as follows. If $1+a\=\zero$, then $a+a^2\=\zero$ after multiplying by $a$. Adding $1$ gives $1+a+a^2\=1$ and using $1+a\=\zero$ with additivity yields the claim $a^2\=1$.

 The first statement of part \eqref{bp-fact4} follows from $\sum b_j\=\sum b_j$ and $e\=\zero$ by additivity and the latter statement from $b\= b$ and $e\=\zero$ by multiplicativity of the pre-addition.

 Part \eqref{bp-fact5} can be deduced as follows. If $a$ and $b$ are zeros of $ B$, then by \eqref{bp-fact4}, $a\= ab\= b$, which are equalities in $ B$ by properness. 

 Part \eqref{bp-fact6} is obtained as follows. If $a+b\=\zero\= a+c$, then by additivity, $a+b+b\= a+c+b$ and using $a+b\=\zero$ on both sides, we have $b\= c$. By properness, we have thus $b=c$. The equation $(-1)^2=1$ follows from \eqref{bp-fact3} by properness. For the last statement \eqref{bp-fact7} assume that $a\= b$. The relation $a+(-a)\=\zero$ yields $b+(-a)\=\zero$ by additivity. The uniqueness of the inverse of $-a$ implies that $a=b$.
\end{proof}

\begin{pg}\label{pg:generated_pre-addition}
 Let $A$ be a monoid. Every relation $S$ on $\N[A]$ can be closed under the axioms of a pre-addition, i.e.\ there is a smallest pre-addition $\cR$ for $A$ that contains $S$. This is easily verified: the whole product $\N[A]\times\N [A]$ is a pre-addition and pre-additions are closed under intersections. We denote this smallest pre-addition by $\cR=\langle S\rangle$ and the generated blueprint by $ B(A,S)=\bpgenquot AS$. We say that $B(A,S)$ \emph{is generated by $S$ over $A$}, or, that \emph{$S$ is a set of generators for $\cR$}. By abuse of notation, we write, for instance, $\cR=\gen{a\= b, c\=d}$ or $\cR=\gen{a+a\=\zero}_{a\in A}$ for $\cR=\gen S$ where $S\subset\N[A]\times\N[A]$ equals $\{(a,b),(c,d)\}$ resp.\ $\{(a+a,\zero)|a\in A\}$.

 Let $f:A_1\to A_2$ be a morphism of monoids. To see whether a pre-addition $\cR_1$ for $A_1$ is mapped to a pre-addition $\cR_2$ for $A_2$, it is enough to verify whether a set of generators for $\cR_1$ is mapped to $\cR_2$.

 Similarly, there is a smallest pre-addition with axiom \textup{(A6)} containing $S$. We denote this cancellative pre-addition by $\gen S_\canc$ and the generated blueprint by $ B(A,S)_\canc=\bpquot A{\gen S_\canc}$. In particular, this defines $ B_\canc$ in $\bp_\canc$ for every blueprint $ B=\bpquot A\cR$ (take $S=\cR$). The natural inclusion $ B\to B_\canc$ is a morphism of blueprints that satisfies the universal property that every morphism from $ B$ into a cancellative blueprint factors uniquely through $ B_\canc$.

 The other axioms \textup{(A7)}--\textup{(A9)} behave somewhat differently. We begin with the properness axiom \textup{(A9)}. Note that we do not include properness in the definition of a blueprint, but list it as an additional axiom, in spite of the fact that all blueprints of interest will be proper. The reason for this is that we emphasize the multiplicative structure of a blueprint and want to think of the pre-addition as a structure imposed on monoids, and the pre-addition should not be a restriction for the multiplication. We will see in the following that we have to pass to a quotient of the underlying monoid in order to obtain a proper blueprint. 
\end{pg}

\begin{lemma} \label{lemma_proper}
 Let $B=\bpquot A\cR$ be a blueprint. Then there is a blueprint $B_\proper=(A_\proper,\cR_\proper)$ and a morphism $ B\to B_\proper$ such that for every blueprint $C$ in $\bp_\proper$ and every morphism $f: B\to C$, there is a unique morphism $f_\proper: B_\proper\to C$ such that the diagram
 $$\xymatrix{ B\ar[d]\ar[rr]^f &&  C \\  B_\proper\ar[urr]_{f_\proper}} $$
 commutes. If $B$ is cancellative, with inverses or with a zero, then $B_\proper$ is also cancellative, with inverses resp.\ with a zero.
\end{lemma}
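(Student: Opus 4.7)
I will build $B_\proper$ by collapsing the monoid $A$ along the congruence induced by single-element relations, then push $\cR$ forward to the quotient. Concretely, define a relation on $A$ by $a \sim b$ iff $a \= b$ in $\cR$ (viewing $a$, $b$ as length-one sums in $\N[A]$). Axioms \textup{(A1)}--\textup{(A3)} make $\sim$ an equivalence relation on $A$, and axiom \textup{(A5)} applied to $a\= b$ and $c\= c$ yields $ac\= bc$, showing $\sim$ is a monoid congruence. Put $A_\proper := A/{\sim}$ with quotient $\pi\colon A\to A_\proper$, and let $\cR_\proper\subset\N[A_\proper]\times\N[A_\proper]$ be the image of $\cR$ under the induced map $\N[\pi]\times\N[\pi]$. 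Set $B_\proper:=(A_\proper,\cR_\proper)$, and define the structure morphism $B\to B_\proper$ by $\pi$.

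\textbf{Verifying $\cR_\proper$ is a proper pre-addition.} Reflexivity follows because $\N[\pi]$ is surjective; symmetry, additivity, and multiplicativity are immediate from the corresponding properties of $\cR$. The only subtle axiom is transitivity, and this hinges on the key lemma that \emph{any two lifts in $\N[A]$ of the same element of $\N[A_\proper]$ are already $\cR$-equivalent}. Indeed, if $\sum b_i$ and $\sum b_j'$ have equal image, then in the free $\N$-module $\N[A_\proper]$ the multiplicities of each class agree, so we can bijectively pair the $b_i$ with the $b_j'$ so that paired terms are $\sim$-equivalent; summing these single-element relations via \textup{(A4)} gives $\sum b_i\=\sum b_j'$ in $\cR$. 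With this in hand, a pair of relations $\alpha\=\beta$ and $\beta\=\gamma$ in $\cR_\proper$ can be lifted to chains $\tilde\alpha\=\tilde\beta_1$ and $\tilde\beta_2\=\tilde\gamma$ in $\cR$ with $\tilde\beta_1,\tilde\beta_2$ lifting $\beta$; the lemma gives $\tilde\beta_1\=\tilde\beta_2$, and transitivity in $\cR$ produces $\tilde\alpha\=\tilde\gamma$, whence $\alpha\=\gamma$ in $\cR_\proper$. Properness of $\cR_\proper$ follows because any length-one element of $\N[A_\proper]$ has only length-one preimages in $\N[A]$, so a relation $a\= b$ for $a,b\in A_\proper$ lifts to $\tilde a\=\tilde b$ in $\cR$ with $\pi(\tilde a)=a$ and $\pi(\tilde b)=b$, forcing $a=b$ in $A_\proper$.

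\textbf{Universal property.} Given $f\colon B\to C$ with $C\in\bp_\proper$, properness of $C$ together with the defining property of $\sim$ forces $f$ to identify $\sim$-equivalent elements; hence $f$ factors uniquely through a monoid map $f_\proper\colon A_\proper\to A_C$. That $f_\proper$ respects the pre-additions follows because every relation in $\cR_\proper$ is the image of one in $\cR$, which $f$ sends into $\cR_C$. Uniqueness of $f_\proper$ is clear from surjectivity of $\pi$.

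\textbf{Preservation of the extra axioms.} For inverses and zero, the defining relations $1+a\=\zero$ respectively $e\=\zero$ are pushed forward to $\cR_\proper$ directly, so $\pi(a)$ and $\pi(e)$ witness these axioms in $B_\proper$. Cancellativity is the one requiring a little work: given $\mu+\gamma\=\nu+\gamma$ in $\cR_\proper$, lift to $\tilde\xi\=\tilde\eta$ in $\cR$, then use the fact that any lift of a sum may be split accordingly to write $\tilde\xi=\tilde\mu+\tilde\gamma_1$, $\tilde\eta=\tilde\nu+\tilde\gamma_2$ with $\tilde\mu,\tilde\nu,\tilde\gamma_i$ lifting $\mu,\nu,\gamma$. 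The key lemma above yields $\tilde\gamma_1\=\tilde\gamma_2$, so combining with additivity and symmetry gives $\tilde\mu+\tilde\gamma_1\=\tilde\nu+\tilde\gamma_1$; cancellativity of $\cR$ then produces $\tilde\mu\=\tilde\nu$, hence $\mu\=\nu$ in $\cR_\proper$. I expect the main technical obstacle throughout to be the ``equal-lift'' lemma for $\N[\pi]$: it is easy to state but is what drives both the transitivity of $\cR_\proper$ and the preservation of cancellativity, so it should be proved once at the outset and reused.
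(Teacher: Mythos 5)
Your proposal is correct and follows essentially the same route as the paper: quotient $A$ by the restriction of $\cR$ to $A$, push the pre-addition forward, and check the universal property and the preservation of (A6)--(A8) on representatives. The only difference is presentational: you isolate the ``equal-lift'' lemma explicitly (which the paper treats as immediate when verifying axioms (A1)--(A5) and in the cancellation step), and this is a sound and slightly more careful way to organize the same argument.
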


\begin{proof}
 For the construction of $B_\proper$ we have to pass to a quotient of $A$ (see Section \ref{subsection:congruences} for more details on the construction of quotients). Let $\sim$ be the restriction of $\cR$ to $A\subset\N[A]$, i.e.\ the equivalence relation on $A$ that is defined by $a\sim b$ if and only if $a\= b$. Then $[a]\cdot[b]=[ab]$ is a well-defined multiplication for the quotient set $A/\sim$. Indeed, if $a\= a'$ and $b\= b'$, then $ab\= a'b'$; the class of $1\in A$ is the neutral element of $A/\sim$. Thus the quotient $A_\proper=A/\sim$ is a monoid. The relation $\cR_\proper$ defined by $\sum[a_i]\=\sum[b_j]$ if $\sum a_i\=\sum b_j$ in $\cR$ is a pre-addition for $A_\proper$: axioms \textup{(A1)}--\textup{(A5)} follow immediately from the corresponding axioms for $\cR$.

 The blueprint $ B_\proper=\bpquot {A_\proper}{\cR_\proper}$ is proper by construction. Given a morphism $f: B\to C$, we see that if $a\= b$ in $ B$, then $f(a)\= f(b)$ in $C$ and thus $f(a)=f(b)$ by properness of $C$. Therefore we can define $f_\proper$ on equivalence classes of $\sim$, which establishes the universal property of $B_\proper$.

 It is easily seen that $ B_\proper$ satisfies \textup{(A7)} resp.\ \textup{(A8)} if they are satisfied by $ B$. We present the slightly more involved case of the cancellation axiom \textup{(A6)}. If $\sum [a_i]+\sum [c_k]\=\sum [b_j]+\sum [c_k]$ in $B_\proper$, then $\sum a_i+\sum c_k\=\sum b_j+\sum c_k'$ for certain $c_k'\=c_k$ in $B$. By additivity, we have $\sum b_j+\sum c_k'\=\sum b_j+\sum c_k$ and by cancellation in $B$, it follows from $\sum a_i+\sum c_k\=\sum b_j+\sum c_k$ that $\sum a_i\=\sum b_j$ and thus $\sum [a_i]\=\sum [b_j]$, which proves cancellation for $B_\proper$.
\end{proof}

 In the cases of axioms \textup{(A7)} and \textup{(A8)}, we have to assume the uniqueness of inverses resp.\ the zero to make sense of a universal property as above. 

\begin{lemma} \label{lemma_inv}
 Let $ B$ be a blueprint. Then there exist the following universal objects.
 \begin{enumerate}
  \item For every blueprint $ B$, there is a morphism $ B\to B_\inv$ into a proper blueprint $ B_\inv$ with inverses such that every morphism from $ B$ into a proper blueprint with inverses factors uniquely through $ B_\inv$.
  \item For every blueprint $ B$, there is a morphism $ B\to B_0$ into a proper blueprint $ B_0$ with a zero such that every morphism from $ B$ into a proper blueprint with a zero factors uniquely through $ B_0$.
 \end{enumerate}
\end{lemma}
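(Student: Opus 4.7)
The plan is to handle both parts in parallel by a uniform two-step recipe: first, adjoin to the underlying monoid $A$ the element(s) required to witness the new axiom and impose the corresponding generating relation, producing an intermediate blueprint in which the axiom visibly holds; second, apply Lemma \ref{lemma_proper} to pass to the proper quotient, invoking its last sentence to ensure the new axiom survives.

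For part (i), I would take $A' = A \times \{1,\varepsilon\}$ as a monoid, where $\varepsilon$ has order $2$, with embedding $A \hookrightarrow A'$ sending $a$ to $(a,1)$. Using the construction of \ref{pg:generated_pre-addition}, let $\cR'$ be the smallest pre-addition on $\N[A']$ containing the image of $\cR$ together with the single generator $1+\varepsilon \= \zero$, and set $B' = \bpquot{A'}{\cR'}$. Then $1$ has an additive inverse in $B'$, so $B'$ is with inverses by Lemma \ref{lemma_bp-fact}(\ref{part2}). Define $B_\inv := (B')_\proper$; it is proper by construction and still with inverses by the last sentence of Lemma \ref{lemma_proper}. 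For the universal property, given $f: B \to C$ with $C \in \bp_\inv$, Lemma \ref{lemma_bp-fact}(\ref{part6}) supplies the unique $-1 \in C$ with $1 + (-1) \= \zero$ and $(-1)^2 = 1$. Extend $f$ to a monoid morphism $\tilde f: A' \to C$ by $(a,\varepsilon) \mapsto -f(a)$, where multiplicativity uses $(-1)^2 = 1$. Since $\tilde f$ sends the generator $1+\varepsilon$ to $1+(-1)$ and already respects $\cR$, it maps $\cR'$ into the pre-addition of $C$. Thus $\tilde f$ is a morphism $B' \to C$, and the universal property of Lemma \ref{lemma_proper} produces the required unique factorization through $B_\inv$.

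For part (ii), I would form $A'' = A \sqcup \{o\}$ with multiplication extended by declaring $o$ absorbing ($o \cdot x = o$ for all $x \in A''$), so in particular $o^2 = o$. Let $\cR''$ be the smallest pre-addition on $\N[A'']$ containing $\cR$ and the generator $o \= \zero$, and set $B'' = \bpquot{A''}{\cR''}$. Then $B''$ satisfies axiom (A8). Define $B_0 := (B'')_\proper$, which is proper by Lemma \ref{lemma_proper} and retains axiom (A8) by that lemma's final sentence. For universality, given $f: B \to C$ with $C \in \bp_0$, Lemma \ref{lemma_bp-fact}(\ref{part5}) gives a unique zero $0_C \in C$. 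Extend $f$ by $o \mapsto 0_C$; this is a monoid morphism because $0_C$ is absorbing by Lemma \ref{lemma_bp-fact}(\ref{part4}), and it sends the generator $o \= \zero$ to $0_C \= \zero$ which already holds in $C$. Factoring through the proper quotient via Lemma \ref{lemma_proper} yields the universal arrow.

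The only mild subtlety is checking that the extensions $\tilde f$ are well-defined monoid morphisms: in (i) this rests on $(-1)^2 = 1$, and in (ii) on $0_C$ being absorbing — both statements that are true precisely because $C$ is proper with inverses respectively with a zero. Beyond this, the argument is essentially formal, since the construction of $\cR'$ and $\cR''$ as smallest pre-additions containing an explicit generating set makes verification of the universal properties a matter of tracing the generators.
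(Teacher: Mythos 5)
Your proof is correct and follows essentially the same route as the paper: adjoin formal inverses (resp.\ a formal absorbing element) to the underlying monoid, impose the corresponding generating relation, and pass to the proper quotient via Lemma \ref{lemma_proper}, using uniqueness of inverses resp.\ of the zero in a proper target to get the forced extension and hence the universal property. The only cosmetic differences are that you adjoin a single involution $\varepsilon$ with the generator $1+\varepsilon\=\zero$ instead of the paper's symbols $-a$ with all relations $a+(-a)\=\zero$, and you drop the paper's case distinction for when $1$ already has an inverse, which is harmless since the proper quotient identifies $\varepsilon$ with any pre-existing inverse of $1$.
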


\begin{proof}
 We only prove the first part of the lemma. The second statement is proved in a similar way. 

 The construction of $ B_\inv$ will be in two steps. First we enrich $ B$ with inverses, and then we take the proper quotient, which has inverses by Lemma \ref{lemma_proper}. If $1$ has an inverse in $ B$, then $ B$ is with inverses by Lemma \ref{lemma_bp-fact} \eqref{bp-fact2} and we define $\tilde  B_\inv$ as $ B$. 

 If $1$ does not have an inverse, then we define $\tilde A_\inv$ as the union of $A$ with the set of symbols $-A=\{-a|a\in A\}$. We extend the multiplication of $A$ to $\tilde A_\inv$ by the rules $(-a)\cdot b=-(ab)=a\cdot (-b)$ and $(-a)\cdot(-b)=ab$. Further we define $\tilde\cR_\inv$ as the pre-addition on $\tilde A_\inv$ that is generated by $S=\cR\cup\{a+(-a)\=\zero|a\in A\}$. Define $\tilde  B_\inv$ as $\bpquot{\tilde A_\inv}{\tilde\cR_\inv}$, which is a blueprint with inverses by construction.

 As mentioned before, the blueprint $ B_\inv=(\tilde  B_\inv)_\proper$ is proper and with inverses by Lemma \ref{lemma_proper}. We show that it satisfies the universal property of the lemma. Let $f: B\to C$ be a morphism into a proper blueprint with inverses. First we extend $f$ to a morphism $\tilde f_\inv:\tilde  B_\inv\to C$. If $\tilde  B_\inv= B$, then there is nothing to show. If $\tilde A_\inv=A\cup -A$, then the relations $a+(-a)\=\zero$ force $\tilde f_\inv$ to map $-a$ to the (unique) inverse of $\tilde f_\inv(a)=f(a)$ in $C$. This defines the unique extension of $f$ to $\tilde  B_\inv$. The rest of the proof follows by Lemma \ref{lemma_proper}.
\end{proof}

Finally, we show that colimits exist in the subcategories $\bp_\cE$ if they exist in $\bp$ (where $\cE$ stays for one of the attributes $\canc$, $\inv$, $0$ or $\proper$) and that limits coincide while colimits are connected by the functor $(-)_\cE$ from $\bp$ to $\bp_\cE$. We make this precise in the following proposition. Note that the previous lemmas imply that the hypothesis of the proposition are satisfied for the inclusion $\iota:\bp_\cE\to\bp$ as subcategory and $\cG=(-)_\cE$.

\begin{prop}\label{prop:adjoint_functors}
 Let $\iota:\cC\to\cB$ be a functor with a left-adjoint $\cG:\cB\to\cC$ such that $\cG\circ\iota$ is isomorphic to the identity functor $\id_\cC$ on $\cC$. Let $\cD$ be a diagram in $\cC$.
 \begin{enumerate}
  \item If the limit $\lim\iota(\cD)$ exists in $\cB$ and lies in the essential image of $\iota$, then the limit $\lim\cD$ exists in $\cC$, and if the limit $\lim\cD$ exists, then $\lim\iota(\cD)\simeq\iota(\lim\cD)$. 
  \item If the colimit $\colim\iota(\cD)$ exists in $\cB$, then the colimit $\colim\cD$ exists in $\cC$ and $\colim\cD\simeq\cG(\colim\iota(\cD))$.
 \end{enumerate}
\end{prop}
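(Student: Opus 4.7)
The plan is to recognize this as a standard fact about adjunctions in which one of the unit/counit is invertible, and exploit two ingredients: the preservation properties of adjoints (right adjoints preserve limits, left adjoints preserve colimits), and the fact that the hypothesis $\cG \circ \iota \simeq \id_\cC$ forces $\iota$ to be fully faithful (it is equivalent to the counit $\cG\iota \to \id_\cC$ being an isomorphism, which in any $\cG \dashv \iota$ situation characterizes full faithfulness of the right adjoint).

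For part (i), I would first observe that since $\iota$ is a right adjoint it preserves whatever limits exist, so if $\lim \cD$ exists in $\cC$ then $\iota(\lim\cD) \simeq \lim \iota(\cD)$. For the existence direction, suppose $\lim \iota(\cD) \simeq \iota(X)$ for some $X \in \cC$. Given a cone $(f_i\colon Y \to \cD_i)$ in $\cC$, applying $\iota$ gives a cone $(\iota(f_i)\colon \iota(Y) \to \iota(\cD_i))$ in $\cB$ that factors uniquely through $\iota(X)$ as some $g\colon \iota(Y) \to \iota(X)$. Full faithfulness of $\iota$ yields a unique $\tilde g\colon Y \to X$ with $\iota(\tilde g) = g$, and this is the required factorization; uniqueness of $\tilde g$ in $\cC$ follows from uniqueness of $g$ in $\cB$ together with faithfulness of $\iota$.

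For part (ii), assume $L := \colim \iota(\cD)$ exists in $\cB$. The cleanest route is via the Yoneda-style characterization: for any $Y \in \cC$, the adjunction and full faithfulness give a chain of natural bijections
\[
 \Hom_\cC(\cG(L), Y) \;\simeq\; \Hom_\cB(L, \iota(Y)) \;\simeq\; \lim_i \Hom_\cB(\iota(\cD_i), \iota(Y)) \;\simeq\; \lim_i \Hom_\cC(\cD_i, Y),
\]
where the middle bijection comes from the defining universal property of the colimit $L$ and the last from full faithfulness of $\iota$. This exhibits $\cG(L)$ as $\colim \cD$ in $\cC$. The canonical maps $\cD_i \to \cG(L)$ are obtained by applying $\cG$ to the structure maps $\iota(\cD_i) \to L$ and composing with the isomorphism $\cG\iota(\cD_i) \simeq \cD_i$.

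The main obstacle is purely bookkeeping: one must check that the bijection in part (ii) really is induced by composition with a single cocone of $\cG(L)$ under $\cD$ (so that one actually constructs a colimiting cocone and not just an abstract isomorphism of hom-functors), and in part (i) that the lifted factorization $\tilde g$ is genuinely unique in $\cC$ rather than merely mapping to a unique morphism in $\cB$. Both are immediate from faithfulness of $\iota$, but they are the only non-formal steps; everything else is a direct consequence of the adjunction together with $\cG \circ \iota \simeq \id_\cC$.
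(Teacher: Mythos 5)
Your proof is correct and follows essentially the same route as the paper: both arguments derive full faithfulness of $\iota$ from $\cG\circ\iota\simeq\id_\cC$ and then compare hom-sets via the adjunction, your chain of bijections in part (ii) being literally the one in the paper. The only cosmetic differences are that you phrase the existence half of part (i) by lifting cones through the fully faithful $\iota$ instead of writing the hom-set isomorphisms, and you invoke ``right adjoints preserve limits'' for the second half of (i), whereas the paper writes out the standard adjunction computation proving exactly that fact.
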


\begin{proof}
 This proposition basically follows from the fact that left-adjoints preserve limits and right-adjoints preserve colimits. We execute the proof.

 Assume that $\lim\iota(\cD)$ is isomorphic to $\iota(B)$ for $B$ in $\cC$. Since $\cG\circ\iota\simeq\id_\cC$, the category $\cC$ is a full subcategory of $\cB$, and we have for every $A$ in $\cC$ that 
 $$ \Hom_\cC(A,B) \ \simeq \ \Hom_\cB(\iota(A),\iota(B)) \ \simeq \ \lim\; \Hom_\cB(\iota(A),\iota(\cD)) \ \simeq \ \lim\; \Hom_\cC(A,\cD), $$
 which shows that $B$ is the limit of $\cD$ in $\cC$ as desired. On the other hand, if $B=\lim\cD$, then by the adjointness of $\iota$ and $\cG$, we have for all $\tilde A$ in $\cB$ that
 $$ \Hom_\cB(\tilde A,\iota(B)) \ \simeq \ \Hom_\cC(\cG(\tilde A),B) \ \simeq \ \lim\; \Hom_\cC(\cG(\tilde A),\cD) \ \simeq \ \lim\; \Hom_\cB(\tilde A,\iota(\cD)), $$
 which means that $\iota(\lim\cD)\simeq\lim\iota(\cD)$.

 Assume that $A$ is the colimit $\colim\iota(\cD)$ in $\cB$. Again, since $\cC$ is a full subcategory of $\cB$ and by the adjointness of $\iota$ and $\cG$, we have for every $B$ in $\cC$ that 
 $$ \Hom_\cC(\cG(A),B) \simeq \ \Hom_\cB(A,\iota(B)) \ \simeq \ \colim\; \Hom_\cB(\iota(\cD),\iota(B)) \ \simeq \ \colim\; \Hom_\cC(\cD,B), $$
 which shows that $\cG(A)$ is the colimit of $\cD$ in $\cC$ as desired.
\end{proof}



\subsection{Monoids}
\label{section:monoids}

As a first class of examples of blueprints we consider the category of monoids, which we denote by $\cM$. It can be embedded as a full subcategory into $\bp$ in the following way. Let $A$ be a monoid, then $B=\bpgenquot A\emptyset$ is the blueprint with the smallest possible pre-addition, which contains only the relations $\sum a_i\=\sum a_i$ for $a_i\in A$. If $A_1\to A_2$ is a monoid morphism, then it extends to a morphism between the associated blueprints $\bpgenquot {A_1}\emptyset\to\bpgenquot {A_2}\emptyset$, and vice versa every blueprint morphism is a morphism between the underlying monoids. This establishes an embedding $\iota_\cM:\cM\hookrightarrow\bp$ of $\cM$ as a full subcategory of $\bp$. Note that the image of this inclusion lies in both $\bp_\canc$ and $\bp_\proper$. In the following, we call blueprints in the essential image of this embedding monoids. Note that a blueprint $B=\bpquot A\cR$ is a monoid if and only if $\cR=\gen\emptyset$. 

The interest in the category $\cM$ stems from Deitmar's theory of $\Fun$-schemes (cf.\ \cite{Deitmar05}), which are geometric objects associated to the category $\cM$. This theory will turn out to be a special case of the theory of blue schemes, as developed in this paper (cf.\ Section \ref{section:monoidal_schemes}). Connes and Consani defined the \emph{category $\cM_0$ of monoids with a zero} (or, an \emph{absorbing element}) and generalized Deitmar's constructions to $\cM_0$, (cf.\ \cite{CC09}). The terminology $\cM_0$-schemes established itself for the schemes associated to the category $\cM_0$; more theory and a large class of examples of $\cM_0$-schemes can be found in \cite{CLS10}. We show that also the category $\cM_0$ embeds into $\bp$ and that $\cM_0$-schemes are a special kind of blue schemes (cf.\ Section \ref{section:monoidal_schemes}).

We review the definition of $\cM_0$. A \emph{monoid with a zero} is a monoid $A$ with an element $0$ that has the property that $0\cdot a=0$ for all $a\in A$. A morphism between monoids with a zero is a multiplicative map that sends $1$ to $1$ and $0$ to $0$. To a monoid $A$ with a zero, we associate the blueprint $B=\bpgenquot A{0\=\zero}$. A morphism in $\cM_0$ is easily seen to define a morphism between the associated blueprints, and, vice versa, every morphism between the associated blueprints comes from a morphism between monoids with a zero. This establishes an embedding $\iota_{\cM_0}:\cM_0\hookrightarrow\bp$ of $\cM_0$ as full subcategory of $\bp$. Note that the image of this embedding lies in both $\bp_\canc$ and $\bp_0$. In the following, we call blueprints in the essential image of this embedding monoids with a zero. Note that a blueprint $B=\bpquot A\cR$ is a monoid with a zero if and only if $\cR=\gen{a\=\zero}$ for some $a\in A$.

The category $\cM_0$ proved to have certain advantages in contrast to $\cM$, in particular if one considers modules and exact sequences of modules (cf.\ \cite{CLS10}). For this reason, we define $\Fun$, the so-called \emph{field with one element}, as the initial object in $\cM_0$, which is the monoid $\{0,1\}$ with a zero $0\=\zero$. Note that this monoid with a zero, considered as a blueprint, is the initial object of $\bp_0$. The initial object of $\bp$, however, is equal to the initial object of $\cM$, which is the trivial monoid $\{1\}$.

\subsection{Semi-rings}
\label{section:semirings}

The next class of examples are blueprints that come from semirings. 

In this text, a \emph{semiring} has an associative and commutative addition with neutral element $0$ and an associative and commutative multiplication with neutral element $1$, which is distributive over the addition and such that $0\cdot a=0$ for any element $a$ of the semiring. A morphism of semirings is an additive and multiplicative map that maps $0$ to $0$ and $1$ to $1$. We denote the category of semirings by $\SRings$. To a semiring $R$, we associate the blueprint $\fB(R)=\bpquot A\cR$ where $A$ is the multiplicative monoid of $R$ and $\cR=\{\sum a_i\=\sum b_j|\sum a_i=\sum b_j\text{ in }R\}$. Then $\fB(R)$ is a proper blueprint with a zero. It is clear that a morphism between semirings induces a morphism between the associated blueprints. This defines an embedding $\SRings\to\bp_0$. In the following, we call blueprints in the essential image of this embedding semirings.

To show that $\SRings$ is a full subcategory of $\bp$, we characterize those blueprints that are semirings. Namely, it is easily seen that $B=\bpquot A\cR$ is a semiring if and only if $B$ is proper and with a zero and if for every $a,b\in A$, there is a $c\in A$ such that $a+b\= c$. In particular, the pre-addition $\cR$ of a semiring $\bpquot A\cR$ is generated by all relations of the form $a+b\= c$. From this it follows that a morphism $f: \bpquot {A_1}{\cR_1}\to \bpquot {A_2}{\cR_2}$ of blueprints that are semirings satisfies $f(a)+f(b)\= f(c)$ if $a+b\= c$. By properness, such a morphism comes from a morphism in $\SRings$, and $\SRings$ is thus a full subcategory of $\bp$.

The embedding $\fB:\SRings\to\bp$ has a left-adjoint functor $(\blanc)_\N:\bp\to\SRings$. If $B=\bpquot A\cR$ is a blueprint, then we define $B_\N$ as the quotient $\N[A]/\cR$. Note that by additivity and multiplicativity of the pre-addition $\cR$, the addition and multiplication of the quotient $B_\N=\N[A]/\cR$ is well-defined and makes $B_\N$ a semiring. The inclusion $A\to\N[A]$ defines a canonical morphism $\iota:B\to B_\N$ that satisfies the universal property that every morphism from $B$ into a semiring $R$ factors uniquely through $\iota:B\to B_\N$.

This implies that $\Hom_\SRings(B_\N,R)=\Hom_\bp(B,\fB(R))$ for every blueprint $B$ and every semiring $R$. Moreover, $\fB(R)_\N\simeq R$, which means that $(\blanc)_\N\circ\fB\simeq\id_\SRings$, and thus $\iota=\fB$ and $\cG=(\blanc)_\N$ satisfy the hypothesis of Proposition \ref{prop:adjoint_functors}.

\subsection{Rings}
\label{section:rings}

\begin{figure}[t]
 \begin{center}
  \includegraphics[width=0.7\textwidth,angle=0]{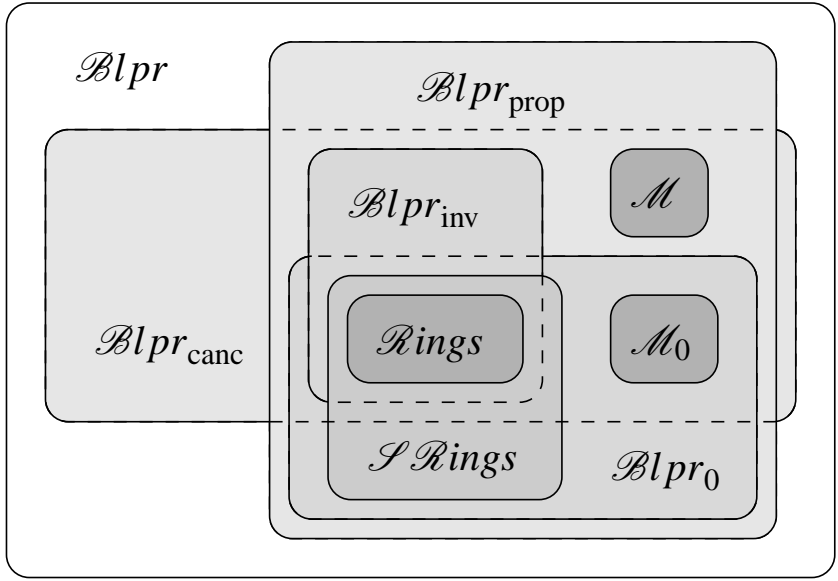}
  \caption{Overview of certain full subcategories of $\bp$}
  \label{figure:subcategories_of_bp}
 \end{center} 
\end{figure}

In this text, a \emph{ring} is always a commutative ring with $1$. Since the category $\Rings$ of rings is a full subcategory of $\SRings$, it can consequently be considered as a full subcategory of $\bp$. In the following, we call blueprints in the essential image of the embedding $\Rings\hookrightarrow\bp$ rings. A blueprint is a ring if and only if it is with inverses and satisfies the conditions of a semiring, i.e.\ it is proper, with a zero and for every $a,b\in B$, there is a $c\in B$ such that $a+b\= c$.

The embedding $\fB:\Rings\to\bp$ has a left-adjoint functor $(\blanc)_\Z:\bp\to\Rings$. If $B=\bpquot A\cR$ is a blueprint, then we define $B_\Z$ as the quotient $\Z[A]/\cI(\cR)$ where $\cI(\cR)=\{\sum a_i-\sum b_j|\sum a_i\=\sum b_j\text{ in }B\}$ is an ideal of the semi-group ring $\Z[A]$ by properties \textup{(A1)}--\textup{(A5)} of the pre-addition $\cR$. The inclusion $A\to\Z[A]$ defines a canonical morphism $\iota:B\to B_\Z$ that satisfies the universal property that every morphism from $B$ into a ring $R$ factors uniquely through $\iota:B\to B_\Z$.

This implies that $\Hom_\Rings(B_\Z,R)=\Hom_\bp(B,\fB(R))$ for every blueprint $B$ and every ring $R$. Moreover, $\cB(R)_\Z\simeq R$, which means that $(\blanc)_\Z\circ\fB\simeq\id_\SRings$, and thus $\iota=\fB$ and $\cG=(\blanc)_\Z$ satisfy the hypothesis of Proposition \ref{prop:adjoint_functors}. Note that $(B)_\N=(B)_\Z$ if $B$ is with inverses.

We summarize all the mentioned subcategories of $\bp$ in Figure \ref{figure:subcategories_of_bp}.

\subsection{Base extension from $\Fun$ to $\Z$}
\label{section:base_extension_from_f1_to_z}

One particular aspect of $\Fun$-geometry is that it is a geometry ``below $\Spec\Z$'', which means that a category $\Sch_\Fun$ of $\Fun$-schemes should come with a functor $\blanc\otimes_\Fun\Z:\Sch_\Fun\to\Sch_\Z$ to the category of (usual) schemes, which is called the \emph{base extension from $\Fun$ to $\Z$}. In the case of $\cM$-schemes (cf.\ \cite[Section 1.1]{LL09b}), this functor is defined in terms of affine coverings and the functor $\blanc\otimes_\Fun\Z:\cM\to\Rings$, which sends a monoid $A$ to the semi-group ring $\Z[A]$. This base extension functor is recovered in the theory of blueprints; namely, the diagram
$$ \xymatrix{\cM\ar[rrr]^{\iota_\cM} \ar[drrr]_{\blanc\otimes_\Fun\Z\quad}&&& \bp\ar[d]^{(\blanc)_\Z}\\ &&& \Rings} $$
commutes (up to equivalence). 

The variant for $\cM_0$-schemes (cf.\ \cite[Section 1.6]{LL09b}) is the functor $\blanc\otimes_\Fun\Z:\cM_0\to\Rings$, which send a monoid $A$ with a zero to the ring $\Z[A]/(0)$ where $(0)$ is the ideal generated by $0\in A\subset\Z[A]$. This base extension functor is recovered in the theory of blueprints in the same way; namely, the diagram
$$ \xymatrix{\cM_0\ar[rrr]^{\iota_{\cM_0}} \ar[drrr]_{\blanc\otimes_\Fun\Z\quad}&&& \bp\ar[d]^{(\blanc)_\Z}\\ &&& \Rings} $$
commutes (up to equivalence).

This means that both categories $\cM$ and $\cM_0$ together with their base extension functors $\blanc\otimes_\Fun\Z$ are special cases of blueprints and the functor $(\blanc)_\Z:\bp\to\Rings$. For the analogous geometric situation, cf.\ Section \ref{section:grothendieck_schemes}.

\subsection{Multiplicative maps into semirings}
\label{section:subsets_of_semirings}

We give a description of blueprints in terms of multiplicative maps from monoids into semirings.

Let $R$ be a semiring and $f:A\to R$ a multiplicative map from a monoid $A$ into $R$ that maps $1$ to $1$. A morphism from $f_1:A_1\to R_1$ to $f_2:A_2\to R_2$ is a monoid morphism $g:A_1\to A_2$ such that every equality $\sum f_1(a_i)=\sum f_1(b_j)$ in $R_1$ where $a_i,b_j\in A_1$ implies $\sum f_2(g(a_i))=\sum f_2(g(b_j))$ in $R_2$. Let $\cB$ be the category of all multiplicative maps $f:A\to R$ from a monoid $A$ into a semiring $R$.

Note that the semiring of an object $f:A\to R$ in $\cB$ plays a subordinated role: for instance, we can replace $R$ by a semiring $R'$ that contains $R$ and define $f':A\to R'$ as the composition of $f$ with the inclusion $R\hookrightarrow R'$. Then $f$ and $f'$ are isomorphic in $\cB$. In fact, only the sub-semiring of $R$ that is generated by $A$ is essential.

We can associate to each $f:A\to R$ in $\cB$ the blueprint $\fB(f)=\bpquot A{\cR_R}$ where $\cR_R$ consists of the relations $\sum a_i\=\sum b_j$ whenever $\sum f(a_i)=\sum f(b_j)$ holds in $R$ for $a_i,b_j\in A$. A morphism between objects in $\cB$ defines naturally a morphism between the associated blueprints; thus we obtain a functor $\fB:\cB\to \bp$.

Conversely, let $B=\bpquot A\cR$ be a blueprint. Then the canonical map $f:A\to A_\N$ is an object of $\cB$. A morphism of blueprints gives rise to a morphism in $\cB$. This defines a functor $\cF:\bp\to\cB$. The following is easy to see.

\begin{lemma}\label{lemma:semirings_and_blueprints}
 The functors 
 $$ \entrymodifiers={+!!<0pt,\fontdimen22\textfont2>}\xymatrix{{\hspace{1cm}\cB}\quad \ar@<-0.5ex>[rr]_\fB && \quad\bp\ar@<-0.5ex>[ll]_\cF} $$
are mutually inverse equivalences of categories. Let $B=\bpquot A\cR$ be a blueprint and $f:A\to A_\N$ the associated object $\cF(B)$ in $\cB$. Then we have the following characterizations:
 \begin{enumerate}
  \item $B$ is proper if and only if $f$ is injective; 
  \item $B$ is with a zero if and only if $0\in\im f$; 
  \item $B$ is cancellative if and only if $f$ is isomorphic to a multiplicative map $f':A\to R$ into a ring $R$; 
  \item $B$ is with inverses if and only if $A_\N$ is a ring.\qed
 \end{enumerate}
\end{lemma}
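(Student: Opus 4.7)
The plan is to establish the equivalence of categories first, then dispatch the four characterizations. For the equivalence, the pivotal observation is that, by construction of $A_\N = \N[A]/\cR$, the canonical map $f: A \to A_\N$ satisfies $\sum f(a_i) = \sum f(b_j)$ in $A_\N$ if and only if $\sum a_i \= \sum b_j$ in $\cR$. Consequently, starting with $B = \bpquot{A}{\cR}$ and computing $\fB(\cF(B))$ recovers $B$ on the nose: the pre-addition extracted from $f$ is exactly $\cR$. In the other direction, for $f: A \to R$ in $\cB$, one has $\fB(f) = \bpquot{A}{\cR_R}$, and then $\cF(\fB(f))$ is the canonical map $A \to A_\N$ with $A_\N = \N[A]/\cR_R$; the universal property of this quotient yields a canonical semiring map $A_\N \to R$ which factors through the sub-semiring $R'$ of $R$ generated by $f(A)$, and by the definition of $\cR_R$, the map $A_\N \to R'$ is an isomorphism. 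Since objects of $\cB$ only depend on the generated sub-semiring of $R$, the maps $f$ and $A \to A_\N$ are isomorphic in $\cB$. Naturality in both directions is a routine check.

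Characterizations (i) and (ii) are immediate from the displayed equivalence of relations. For (i), $f$ is injective if and only if $a \= b$ forces $a = b$, which is exactly properness; for (ii), $0 \in \im f$ if and only if there exists $e \in A$ with $f(e) = 0$ in $A_\N$, equivalently $e \= \zero$ in $\cR$, which is the ``with a zero'' condition. For (iii), cancellation axiom (A6) says precisely that the additive monoid of $A_\N$ is cancellative; I would then invoke the Grothendieck-group construction to produce an embedding $A_\N \hookrightarrow R$ into a ring, so that the composition $A \to A_\N \hookrightarrow R$ yields the desired $f': A \to R$ isomorphic to $f$ in $\cB$, since an injective semiring map neither creates nor destroys equalities of sums. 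Conversely, if $f$ is isomorphic to $f': A \to R$ with $R$ a ring, then cancellativity in $R$ passes directly to $\cR$.

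Finally, (iv) is the most delicate point. The forward direction is clear: if $B$ is with inverses, then each generator $f(a)$ of the additive monoid of $A_\N$ admits an additive inverse $f(b) \in A_\N$, so $A_\N$ is a ring. The converse is where I expect the main obstacle to lie, since the bare assertion that $A_\N$ is a ring only provides, a priori, a formal-sum inverse $\sum b_j$ of each $f(a) \in A_\N$, whereas axiom (A7) demands a single element $b \in A$ with $a + b \= \zero$. To bridge this gap I would invoke Lemma \ref{lemma_bp-fact}\eqref{bp-fact2} to reduce the problem to showing that $1 \in B$ admits a single-element inverse, and then exploit the multiplicative axiom of the pre-addition, together with the structure of $A_\N$, to lift an additive inverse of $1 \in A_\N$ to an element of $A$.
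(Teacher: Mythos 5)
Your verification of the equivalence and of parts (i)--(iii) is sound, and it is exactly the routine check the paper leaves unwritten (the lemma is stated with no proof): the pivotal identity that $\sum f(a_i)=\sum f(b_j)$ holds in $A_\N=\N[A]/\cR$ precisely when $\sum a_i\=\sum b_j$ lies in $\cR$, the reduction of $\cF(\fB(f))$ to the sub-semiring of $R$ generated by $f(A)$, the reading of (A9) and (A8) off this identity, and the identification of (A6) with additive cancellativity of $A_\N$ followed by the Grothendieck-ring embedding are all correct.

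The genuine gap is the converse of (iv), and it sits exactly at the step you postpone. You propose to reduce, via Lemma \ref{lemma_bp-fact} \eqref{bp-fact2}, to producing a single $a\in A$ with $1+a\=\zero$, and then to ``lift'' the additive inverse of $1\in A_\N$ to an element of $A$ using multiplicativity of the pre-addition; but no argument is given, and no such lifting exists in general, because $-1$ need only be represented by a genuine sum of elements of $A$. Concretely, let $A$ be the free commutative monoid on $s,t$ and $\cR=\gen{1+s+t\=\zero}$. In $A_\N$ one has $[1]+[s+t]=0$, so every element $x$ has the additive inverse $x\cdot[s+t]$ and $A_\N$ is a ring. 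On the other hand, the kernel congruence of the semiring morphism $\N[s,t]\to\Z[s]$ sending $t$ to $-1-s$ is a pre-addition containing the generator $1+s+t\=\zero$, hence contains $\cR$; so a relation $1+s^mt^n\=\zero$ would force $1+s^m(-1-s)^n=0$ in $\Z[s]$, which is impossible for all $m,n$. Thus $1$ has no inverse in this blueprint, i.e.\ $B$ is not with inverses, although $A_\N$ is a ring. Consequently the strategy you sketch cannot be completed as stated: multiplicativity of $\cR$ lets you multiply relations but gives no way to trade the sum $s+t$ for a single monoid element. What your forward argument does establish is the clean characterization ``$B$ is with inverses if and only if $1+f(a)=0$ in $A_\N$ for some $a\in A$, i.e.\ $-1\in\im f$'' (which in particular forces $A_\N$ to be a ring); the backward implication of (iv) in its literal form needs either this reformulation or an extra hypothesis, and since the paper offers no proof of the lemma, there is nothing there to fill this direction in for you. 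Your proof of (iv) is therefore incomplete at precisely the point you flagged.
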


Note that the glued category $\mathfrak{MR}$ that Connes and Consani use in their definition of an $\Fun$-scheme (cf.\ \cite{CC09}) has a natural realization as the full subcategory of $\bp$ whose objects are either monoids with a zero or rings. For more details on the connection between Connes and Consani's $\Fun$-schemes and blue schemes (as developed in this paper), see Section \ref{section:CC-schemes}.

\subsection{Sesquiads}
\label{section:sesquiads}

We review Deitmar's theory of sesquiads (cf.\ \cite{Deitmar11}). A \emph{sesquiad} is a submonoid $A$ of a ring $R$ that contains $0\in R$ and that is endowed with a map $\Sigma_k:D_k\to A$ where $D_k=\{(a_i)\in A^n\subset R^n| b=\sum k_ia_i \in A\}$ and $\Sigma(a_i)=b$ for every $k=(k_1,\dotsc,k_n)\in\Z^n$ and every $n\geq2$. A \emph{sesquiad morphism} is a multiplicative map $\varphi:A_1\to A_2$ preserving $0$ and $1$ such that for all $(a_i)\in D_k\subset A^n$, we have $\varphi(\Sigma_k(a_i))=\Sigma_k(\varphi(a_i))$. We can associate to a sesquiad the blueprint $B=\bpquot A\cR$ where $\cR$ is the pre-addition generated by $\{\sum a_i\= b| \Sigma_k(a_i)=b\}$. This defines a fully faithful embedding of the category of sesquiads into the category of blueprints. The essential image of this embedding is the subcategory of cancellative proper blueprints with a zero.

Furthermore, congruences and ideals of sesquiads coincide with congruences and ideals of blueprints as introduced in Section \ref{section:congruences_and_ideals}. Consequently, a Zariski scheme in the sense of \cite{Deitmar11} is nothing else than a blue scheme (to be defined in Section \ref{section:blue_schemes}) that is locally isomorphic to spectra of sesquiads.

\subsection{$B_1$-algebras}
\label{section:b_1-algebras}

Following Lescot's paper \cite{Lescot09}, a $B_1$-algebra is a semiring $R$ with $1+1=1$. In particular, $B_1$ is the semiring $\{0,1\}$ with $1+1=1$ (all other sums and products are determined by the axioms of a semiring). The interest in $B_1$-algebras lies in a formal similarity with algebra over $\Fun$. Paul Lescot develops the notions of ideals, congruences and schemes.

As a semiring, a $B_1$-algebra is in particular a blueprint. Lescot's theory coincides with the theory of blueprints in spirit though there are some technical differences, e.g.\  in the definition of a (prime) congruence.


\subsection{Cyclotomic field extensions of $\Fun$}
\label{section:cyclotomic_field_extensions}

Central objects in the theory of $\Fun$ are, besides $\Fun$ itself, the {cyclotomic field extensions $\Funn$ of $\Fun$}. The ``field'' $\Funn$ is often defined as the group $\mu_n$ of $n$-th roots of unity, or, as the monoid $\{0\}\cup\mu_n$ with a zero. However, these definitions have the following disadvantages: there are morphisms $\Funn\to\Fun$, which are not injective and thus untypical for ``field homomorphisms''; and the extension of $\mu_n$ to $\Z$ is the semi-group ring $\Z[\mu_n]$, which does not equal the ring of integers of the cyclotomic field extension $\Q[\mu_n]$ of $\Q$ unless $n=1$. Therefore we propose the following alternative definition.

The \emph{$n$-th cyclotomic field extension $\Funn$ of $\Fun$} is the blueprint $B=\bpquot A\cR$ where $A=\{0\}\cup\mu_n$ and $\cR$ is generated by the equation $0\= \zero$ and $\sum_{h\in H} h\=0$ for all non-trivial subgroups $H$ of $\mu_n$. The blueprint $\Funn$ is proper, with a zero and cancellative for all $n\geq1$, and it is with inverses if and only if $n$ is even.

This definition satisfies the desired properties mentioned above: every morphism $\F_{1^k}\to\Funn$ is injective. Consequently, $\mu_k$ is a subgroup of $\mu_n$ and $k$ is a divisor of $n$.

The ``base extension to $\Z$'', i.e.\ the generated ring $\cO_n=(\Funn)_\Z$, is isomorphic to the ring of integers of the cyclotomic number field $\Q[\mu_n]$. Note that $(\Funn)_\Z\simeq(\F_{1^{2n}})_\Z$ if $n$ is odd. Note further that if $n=q-1$ where $q=p^a$ is the $a$-th power of a prime $p$, then the inertia degree of a prime $\fp$ over $p$ in $\cO_n$ is $a$, which means that $\cO_n/\fp\simeq\F_q$. Thus we have a morphism of blueprints $\Funn\to\F_q$, which is an isomorphism between the underlying monoids.


\subsection{Archimedean valuation rings}
\label{section:archimedean_valuation_rings}

Blueprints bridge the gap between archimedean and non-archimedean absolute values. 

Let $\norm\ :K\to\R_{\geq0}$ be an absolute value of a number field $K$ with integers $\cO_K$. Then $A=\{a\in K|\norm a\leq1\}$ is a submonoid of $K$. If $\norm\ $ is non-archimedean, the valuation ring of $\norm\ $ corresponds to the blueprint $\fB(A,K)$ (by the means of Lemma \ref{lemma:semirings_and_blueprints}). A first attempt is to define the ``archimedean valuation ring'' as $\fB(A,K)$. However, the only ideal of this blueprint is the zero ideal (for the definition of ideals, see Section \ref{subsection:ideals}). A better definition is $B=\bpquot A\cR$ where $\cR$ is the pre-addition generated by the relations $0\=\zero$ and $\sum_{a\in H}a\=0$ for all finite non-trivial subgroups $H\subset A^\times$. This is a proper blueprint with inverses and a zero. It has a unique maximal ideal, which is $\fm=\{a\in A|\norm a<1\}$ (cf.\ Section \ref{section:maximal_congruences_and_maximal_ideals}), and the quotient $A/\fm$ is isomorphic to $\F_{1^n}[T_1^{\pm1},\dotsc,T_k^{\pm1}]$ where $n$ is the number of roots of unity in $K$ and $k$ is the rank of the group $\cO_K^\times$ of $K$-units (the blueprint $\F_{1^n}[T_1^{\pm1},\dotsc,T_k^{\pm1}]$ is the localization of the free blueprint $B=\F_{1^n}[T_1,\dotsc,T_k]$ over $\Funn$ at $S=B-\{0\}$, cf.\ Sections \ref{section:free_algebras} and \ref{section:localizations} for definitions).


\subsection{Free algebras}
\label{section:free_algebras}

Given a blueprint $B=\bpquot A\cR$ and a set $S=\{T_i\}_{i\in I}$, we define the \emph{free blueprint over $B$ generated by $S$} as the blueprint $B[S]=\bpquot{A[S]}{\cR[S]}$ where 
\[
 A[S] \quad = \quad \Bigr\{ \ a \prod_{i\in I} T_i^{n_i} \ \Bigr| \ a\in B,\ (n_i)\in\bigoplus_{i\in I} \N \ \Bigr\},
\]
which is a monoid with respect to the multiplication $(a\prod T_i^{n_i})\cdot(b\prod T_i^{m_i})=ab\prod T_i^{n_i+m_i}$, and $\cR[S]$ is the pre-addition on $A[S]$ generated by
\[
 \Bigr\{\ \sum a_{k}  \prod T_i^{n_{i}} \= \sum b_{l} \prod T_i^{n_{i}} \ \Bigr| \ \sum a_k \=\sum b_k \ \Bigr\}.
\]
It satisfies the universal property of the free object over $S$: for every map $f:S\to C$ into a blueprint $C$, there is a unique morphism $g: B[S]\to C$ such that $g(T_i)=f(T_i)$ for every $i\in I$. We also write $B[T_i]_{i\in I}$ for $B[S]$ or $B[T_1,\dotsc,T_n]$ if $S=\{T_1,\dotsc,T_n\}$.

If $B$ is contained in $\bp_\canc$ or $\cM$, then $B[S]$ is so, too. If, however, $B$ contains a relation of the form $\sum a_k\=\zero$ with a non-trivial sum $\sum a_k$, then $B[S]$ is not proper since $\sum a_k \prod T_i^{n_i} \=\sum a_k\prod T_i^{m_i}$, independently of $(n_i)$ and $(m_i)$. One sees easily that if $B$ is contained in one of $\bp_\proper$, $\bp_\inv$, $\bp_0$ or $\cM_0$, then $B[S]_\proper$ is so, too, and it is the free object over $S$ in the corresponding subcategory. If $B$ is a ring, then $B[S]$ is not a ring unless $S$ is empty, but $B[S]_\Z$ is the free blueprint over $R$ in the sense of rings. Similarly, if $B$ is a semiring, then $B[S]_\N$ is the free semiring over $B$ generated by $S$.


\subsection{Localizations}
\label{section:localizations}

 Let $B=\bpquot A\cR$ be a blueprint and $S$ a \emph{multiplicative subset} of $B$, i.e.\ a submonoid of $A$. We define $S^{-1}A$ as the quotient of $A\times S$ by the equivalence relation $\sim$ given by $(a,s)\sim(a',s')$ if and only if there is a $t \in S$ such that $tsa'=ts'a$. We write $\frac as$ for the equivalence class of $(a,s)$ in $S^{-1}A$. We define $S^{-1}\cR$ as the set 
 \[
  S^{-1}\cR \quad = \quad \Bigr\{\ \sum\frac{a_i}{s_i}\=\sum\frac{b_j}{r_j}\ \Bigl|\text{ there is a }t\in S\text{ such that }\sum ts^ia_i\=\sum tr^jb_j\ \Bigl\} 
 \]
 \[
  \hspace{6.5cm}\text{ where }\quad s^i=\prod_{k\neq i}s_k\cdot\prod_j r_j\quad \text{ and }\quad r^j=\prod_is_i\cdot\prod_{l\neq j} r_l.
 \]
 We leave the elementary proofs of that $S^{-1}A$ is a monoid (with the multiplication inherited from $A\times S$) and that $S^{-1}\cR$ is a pre-addition for $S^{-1}A$ to the reader. We define the \emph{localization of $B$ at $S$} as the blueprint $S^{-1}B=\bpquot{S^{-1}A}{S^{-1}\cR}$.

 The association $a\mapsto \frac a1$ defines an epimorphism $B\to S^{-1}B$. It satisfies the universal property that every morphism $f:B\to C$ that maps $S$ to the units of $C$ factors uniquely through $B\to S^{-1}B$.

 It easy to see that if $B$ is contained in one of the subcategories $\bp_\canc$, $\bp_\proper$, $\bp_\inv$, $\bp_0$, $\cM$, $\cM_0$, $\SRings$ or $\Rings$, then $S^{-1}B$ is so, too.

\subsection{Limits and colimits}
\label{section:limits-colimits}

We prove the existence of small limits as well as finite and directed colimits. Recall that $\bp$ has an initial object, namely, the blueprint $\bpgenquot{\{1\}}{\emptyset}$, and a terminal object, namely, the trivial blueprint $\bpgenquot{\{1\}}{1\=\zero}$. The trivial blueprint is contained in all the subcategories $\bp_\canc$, $\bp_\proper$, $\bp_\inv$, $\bp_0$, $\cM$, $\cM_0$, $\SRings$ and $\Rings$, and is thus a terminal object of these subcategories. The initial object, which we denote simply by $\{1\}$, is contained in $\bp_\canc$, $\bp_\proper$, $\bp_\inv$ and $\cM$. The initial object of $\cM_0$ and $\bp_0$ is the blueprint associated to the monoid $\{0,1\}$ with zero $0$, the initial object of $\SRings$ is the blueprint associated to the semiring $\N$ and the initial object of $\Rings$ is the blueprint associated to the ring $\Z$.

Recall that the inclusions $\iota$ of $\bp_\canc$, $\bp_\proper$, $\bp_\inv$, $\bp_0$, $\SRings$ and $\Rings$ into $\bp$ have a left-adjoint $\cG$. So we can make use of Proposition \ref{prop:adjoint_functors} in the following proofs to describe limits and colimits in these subcategories in terms of the limits and colimits in $\bp$.

\begin{prop} \label{prop:limits_in_bp}
 The category $\bp$ contains small limits. Small limits in all the subcategories $\bp_\canc$, $\bp_\proper$, $\bp_\inv$, $\bp_0$, $\SRings$ and $\Rings$ exist and coincide with the small limits in $\bp$.
\end{prop}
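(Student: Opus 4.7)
The plan is to first construct small limits in $\bp$ by hand, and then invoke Proposition \ref{prop:adjoint_functors} to transfer the result to each subcategory. Concretely, given a diagram $\cD=(B_i)_{i\in I}$ in $\bp$ with $B_i=\bpquot{A_i}{\cR_i}$ and transition morphisms $\varphi_{ij}:B_i\to B_j$, I would first form the limit $A=\lim A_i$ in the category $\cM$ of commutative monoids (where small limits are constructed set-theoretically with coordinate-wise multiplication, $1=(1)_i$), together with the canonical projections $\pi_i:A\to A_i$. Then I would define a relation $\cR$ on $\N[A]$ by declaring $\sum a_k\=\sum b_l$ in $\cR$ if and only if $\sum\pi_i(a_k)\=\sum\pi_i(b_l)$ in $\cR_i$ for every $i\in I$.

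The next step is to verify that $\cR$ is a pre-addition. Axioms \textup{(A1)}--\textup{(A5)} follow immediately from the corresponding axioms for each $\cR_i$, since each $\pi_i$ extends to a semiring morphism $\N[A]\to\N[A_i]$ which preserves sums and products. For the universal property, I would check that the projections $B=\bpquot A\cR\to B_i$ are morphisms of blueprints (tautological by construction of $\cR$) and that any family of blueprint morphisms $f_i:C\to B_i$ compatible with the $\varphi_{ij}$ assembles into a unique monoid morphism $f:C\to A$ via the limit property in $\cM$, whose extension to $\N[C]$ sends relations in the pre-addition of $C$ into $\cR$ by the defining property of $\cR$.

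For the subcategories, the essential input is Proposition \ref{prop:adjoint_functors}: since each inclusion $\iota:\bp_\cE\hookrightarrow\bp$ (with $\cE\in\{\canc,\proper,\inv,0\}$) and likewise $\SRings\hookrightarrow\bp$, $\Rings\hookrightarrow\bp$ has a left adjoint with $\cG\circ\iota\simeq\id$, it suffices to show that the limit $B=\lim B_i$ constructed above lies in the essential image of $\iota$ whenever all $B_i$ do. I would verify each case directly: if each $\cR_i$ is cancellative, then a relation $\sum a_k+\sum c_m\=\sum b_l+\sum c_m$ in $\cR$ projects to cancellable relations in each $\cR_i$, and the conclusion $\sum a_k\=\sum b_l$ in $\cR$ follows from the defining condition. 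For properness, $a\= b$ in $\cR$ forces $\pi_i(a)=\pi_i(b)$ for all $i$, hence $a=b$ in $A$ by the limit property in $\cM$. For inverses (resp.\ zero, resp.\ semiring/ring structure), the point is that morphisms in $\bp_\inv$ (resp.\ $\bp_0$, $\SRings$, $\Rings$) preserve the inverse of $1$ (resp.\ the zero, the sum $a+b=c$), so the coherent choices in each $B_i$ glue to an element of $A$ witnessing the desired property.

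The only step that is genuinely subtle is the semiring/ring case: I need to produce, for every pair $a,b\in A$, an element $c\in A$ with $a+b\= c$ in $\cR$. The key observation is that in each $B_i$ there is a \emph{unique} such $c_i=\pi_i(a)+\pi_i(b)$ (by properness plus the semiring axiom), and uniqueness forces the family $(c_i)$ to be compatible with the $\varphi_{ij}$, hence defines an element $c\in A=\lim A_i$. By construction of $\cR$, the relation $a+b\= c$ then holds. This coherence argument, together with Proposition \ref{prop:adjoint_functors}\textup{(i)}, yields the claim for all six subcategories and shows that the forgetful functors to $\bp$ create small limits.
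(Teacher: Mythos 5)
Your proposal is correct and follows essentially the same route as the paper: where the paper constructs small products (componentwise pre-addition) and equalizers (restriction of the pre-addition to the equalizing submonoid) and invokes the standard reduction, you merge these into a direct construction of the limit of an arbitrary diagram as the limit of the underlying monoids with the ``holds in every projection'' pre-addition, which amounts to the same thing. For the subcategories both arguments come down to checking that the limit object stays in the subcategory (which suffices by fullness, i.e.\ Proposition \ref{prop:adjoint_functors}), and your case-by-case verification --- in particular the uniqueness/coherence argument producing $a+b\=c$ in the semiring and ring cases --- correctly fills in details the paper leaves to the reader.
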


\begin{proof}
 To prove that $\bp$ contains small limits, it suffices to prove that $\bp$ contains small products and equalizers (cf.\ \cite[Thm.\ 2.8.1]{Borceux94}). The product of a family of blueprints $\{B_i\}_{i\in I}$ is given by the Cartesian product $\prod B_i$ over $I$ together with componentwise multiplication and the pre-addition 
 $$ \cR \quad = \quad \{\ \sum(c_{i,k})_{i\in I}\=\sum (d_{i,l})_{i\in I} \ | \ \sum c_{i,k}\=\sum d_{i,l} \text{ for all }i\in I\ \}. $$
 The canonical projections of the product are the componentwise projection $p_j:\prod B_i\to B_j$. The universal property of a product is verified immediately for $\prod B_i$. If $\cC$ is one of the categories $\bp_\canc$, $\bp_\proper$, $\bp_\inv$, $\bp_0$, $\SRings$ or $\Rings$ and all the $B_i$ are in $\cC$, then the product is also an object in $\cC$ and satisfies the universal property of the product in $\cC$.

 The equalizer of two morphisms $f,g:B\to C$ is the subblueprint $\eq(f,g)=\bpquot A\cR$ where $A$ is the monoid $\{a\in B\mid f(a)=g(a)\}$ and $\cR$ is the restriction of the pre-addition of $B$ to $A$. Since $f(1)=1=g(1)$, the equalizer contains $1$ and since $f(ab)=f(a)f(b)=g(a)g(b)=g(ab)$ for all $a,b\in A$, the set $A$ is multiplicatively closed and thus indeed a monoid. It is clear that the natural inclusion $\eq(f,g)\to B$ satisfies the universal property of the equalizer of $f$ and $g$. It is also easily verified for all subcategories $\cC$ as in the theorem that $\eq(f,g)$ is in $\cC$ if $B$ and $C$ are so.
\end{proof}

\begin{rem}
 Note that there is a digression between the categorical product in $\bp$ and in the subcategories $\cM$ and $\cM_0$ as the following basic example shows. Let $A=\{e,1\}$ be a monoid where $e^2=e$ is an idempotent element and let $B=\bpgenquot A\emptyset$ be the blueprint associated to $A$. Then the product $B\times B$ is the monoid $A\times A=\{(e,e),(e,1),(1,e),(1,1)\}$ together with the pre-addition generated by $(e,1)+(1,e)\=(e,e)+(1,1)$. Consequently $B\times B$ is not a monoid and thus in particular not the blueprint associated to the monoid $A\times A$. The analogous example for the monoid $A=\{0,e,1\}$ with zero $0$ shows that also products in $\cM_0$ fail to coincide with products in $\bp$.
\end{rem}

\begin{prop}
 Let $\gamma:B\to C$ and $\delta:B\to D$ be morphisms of blueprints. Then the tensor product $C\otimes_B D$ exists in $\bp$. If $\cC$ is one of the categories $\cM$ or $\cM_0$ and if $B$, $C$ and $D$ are in $\cC$, then $C\otimes_BD$ is in $\cC$ and represents the tensor product in $\cC$. If $\cC$ is one of the categories $\bp_\canc$, $\bp_\proper$, $\bp_0$, $\bp_\inv$, $\SRings$ or $\Rings$ and $\cG: \bp\to \cC$ is the left-adjoint to the usual embedding $\iota:\cC\to\bp$, then the tensor product in $\cC$ equals $\cG(C\otimes_BD)$ provided $B$, $C$, and $D$ are in $\cC$.
\end{prop}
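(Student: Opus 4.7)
The plan is to recognize $C \otimes_B D$ as the pushout of $\gamma$ and $\delta$ in $\bp$, construct it explicitly, and then transfer to the listed subcategories --- directly for $\cM$ and $\cM_0$, and via Proposition~\ref{prop:adjoint_functors}(ii) for the subcategories with left adjoint $\cG$. For the explicit construction, I would first form the pushout on the level of monoids: take $A = (C \times D)/{\sim}$, where $\sim$ is the smallest equivalence relation on $C \times D$ that is compatible with componentwise multiplication and contains $(c\gamma(b), d) \sim (c, \delta(b) d)$ for all $b \in B$, $c \in C$, $d \in D$; write $c \otimes d$ for the class of $(c, d)$. I would then endow $A$ with the pre-addition $\cR$ generated by the two families $\{\sum c_i \otimes 1 \= \sum c'_j \otimes 1 \mid \sum c_i \= \sum c'_j \text{ in } C\}$ and $\{\sum 1 \otimes d_i \= \sum 1 \otimes d'_j \mid \sum d_i \= \sum d'_j \text{ in } D\}$, and set $C \otimes_B D = \bpquot A \cR$, together with the obvious maps $c \mapsto c \otimes 1$ and $d \mapsto 1 \otimes d$.

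Next I would verify the universal property directly. Given $f: C \to E$ and $g: D \to E$ with $f\gamma = g\delta$, the map $h(c \otimes d) = f(c) g(d)$ is well-defined on $A$ because $f(c\gamma(b)) g(d) = f(c) g(\delta(b) d)$; it is multiplicative and sends $1 \otimes 1$ to $1$. Since both generating families of $\cR$ are carried to valid relations in $E$, and the pre-addition axioms propagate from any generating set, $h$ extends to a morphism of blueprints; the decomposition $c \otimes d = (c \otimes 1)(1 \otimes d)$ forces uniqueness. For $\cM$, the generating relations are all of the trivial form $a \= a$, so $\cR = \gen\emptyset$ and $C \otimes_B D$ lies in $\cM$; since $\cM \hookrightarrow \bp$ is fully faithful, its universal property restricts. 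For $\cM_0$, the pushout relation identifies $0_C \otimes 1$, $0_C \otimes 0_D$ and $1 \otimes 0_D$ (using that $\gamma$ and $\delta$ preserve the distinguished zero), so $C \otimes_B D$ has a single zero and lies in $\cM_0$, and its universal property restricts similarly.

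For the remaining subcategories $\cC \in \{\bp_\canc, \bp_\proper, \bp_0, \bp_\inv, \SRings, \Rings\}$, the tensor product is a finite colimit (the pushout) of a diagram in $\cC$, which has just been constructed in $\bp$; Proposition~\ref{prop:adjoint_functors}(ii) then immediately yields existence in $\cC$ together with the formula $\cG(C \otimes_B D)$. The step I expect to be the main obstacle is the calibration of the pre-addition $\cR$: it must contain exactly the relations forced by compatibility with arbitrary $f$ and $g$, neither too few (otherwise no morphism $h$ exists) nor too many (otherwise $h$ is over-constrained). The observation that resolves this is that multiplicativity of a pre-addition reduces any sum $\sum c_i \otimes d_i$ in $A$ to products of generators of the two special forms, so the generators that we impose are both necessary and sufficient.
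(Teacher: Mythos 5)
Your proposal is correct and follows essentially the same route as the paper: the same explicit construction (quotient of $C\times D$ by the multiplicative equivalence generated by $(c\gamma(b),d)\sim(c,\delta(b)d)$, endowed with the pre-addition generated by the images of the relations of $C$ and of $D$), the same direct verification of the universal property via $h(c\otimes d)=f(c)g(d)$, and the same reduction of the subcategory statements to a direct check for $\cM$, $\cM_0$ and to Proposition \ref{prop:adjoint_functors} for the categories admitting a left adjoint $\cG$.
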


\begin{proof}
 We construct the tensor product $C\otimes_BD$ and prove that it represents the colimit of the diagram 
 $$ \xymatrix@C=1pc@R=1,5pc{C \\ B\ar[u]^{\gamma}\ar[rr]_{\delta}&&D} $$
 in the following. For $b\in B$, $c\in C$ and $d\in D$, we write $b.c$ and $b.d$ for the product $\gamma(b)\cdot c$ in $C$ resp.\ $\delta(b)\cdot d$ in D. Let $\sim$ be the equivalence relation on $C\times D$ that is generated by the relations $(b.c,d)\sim(c,b.d)$ where $b\in B$, $c\in C$ and $d\in D$. The quotient $A=B\times C/\sim$ inherits a multiplication from the product multiplication on $B\times C$ since $(b.c,d)\cdot(c',d')=(b.cc',dd')\sim(cc',b.dd')=(c,b.d)\cdot(c',d')$ for $b\in B$, $c,c'\in C$ and $d,d'\in D$. We write classes $[(c,d)]$in $A$ as $c\otimes d$ and endow $A$ with the pre-addition $\cR$ that is generated by the relations $\sum c_k\otimes 1\=\sum c'_l\otimes 1$ if $\sum c_k\=\sum c'_l$ in $C$ and $\sum 1\otimes d_k\=\sum 1\otimes d'_l$ if $\sum d_k\=\sum d'_l$ in $D$. We denote the blueprint $\bpquot A\cR$ by $C\otimes_BD$. It comes together with the morphisms $\iota_C:C\to C\otimes_BD$ with $\iota_C(c)=c\otimes 1$ and $\iota_D:D\to C\otimes_BD$ with $\iota_D(d)=1\otimes d$. Note that $\iota_C\circ\gamma=\iota_D\circ\delta$.

 We prove that $C\otimes_BD$ together with $\iota_C$ and $\iota_D$ satisfies the universal property of the tensor product. Let 
 $$ \xymatrix@C=1pc@R=1,5pc{C \ar[rr]^f && E \\ B\ar[u]^{\gamma}\ar[rr]_{\delta}&&D\ar[u]_{g}} $$
 be a commutative diagram. We have to show that there is a unique morphism $h:C\otimes_BD\to E$ such that $h\circ\iota_C=f$ and $h\circ\iota_D=g$. Since morphisms of blueprints are in particular multiplicative maps, it is clear that the only possible definition of $h$ is $h(c\otimes d)=f(c)\cdot g(d)$. Then $h$ is indeed multiplicative and we have $h\circ\iota_C=f$ and $h\circ\iota_D=g$ as multiplicative maps. For to show that $h$ maps $\cR$ to the pre-addition on $E$, it satisfies to verify this condition for generators of $\cR$. Let $\sum c_k\otimes 1\=\sum c'_l\otimes 1$ with $\sum c_k\=\sum c'_l$ in $C$. Then we can use that $f$ is a morphism of blueprints to verify that 
 $$ \sum h(c_k\otimes 1)\ =\ \sum f(c_k)\cdot g(1) \ =\ \sum f(c_k)\  \=\ \sum f(c'_l) \ = \ \sum f(c'_l)\cdot g(1) \ = \ \sum h(c'_l\otimes 1). $$\
 An analogous argument shows that $ \sum h(c_k\otimes 1)\=\sum h(c'_l\otimes 1)$, and thus $C\otimes_BD$ represents indeed the tensor product of $C$ and $D$ over $B$.

 It is easily verified that $C\otimes_BD$ is in $\cM$ or $\cM_0$ if $B$, $C$ and $D$ are so. The statement about the tensor product in $\bp_\canc$, $\bp_\proper$, $\bp_0$, $\bp_\inv$, $\SRings$ and $\Rings$ follows from Proposition \ref{prop:adjoint_functors}.
\end{proof}

Since $\bp$ has an initial object, the existence of finite colimits follows from the existence of tensor products.

\begin{cor}\label{cor:finite_colimits_in_bp}
 The category $\bp$ contains finite colimits. \qed
\end{cor}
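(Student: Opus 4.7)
The plan is to reduce the corollary to the standard categorical fact that a category possessing an initial object together with all pushouts necessarily admits all finite colimits (see, e.g., \cite[Thm.~2.8.1]{Borceux94} applied dually, or via the explicit construction of finite coproducts and coequalizers as pushouts). Both ingredients are in place already: Section \ref{section:limits-colimits} exhibits the initial object $\{1\}=\bpgenquot{\{1\}}{\emptyset}$ in $\bp$, and the preceding proposition constructs the tensor product $C\otimes_B D$ together with the morphisms $\iota_C,\iota_D$ and verifies that it satisfies precisely the universal property making $C\otimes_B D$ the pushout of the span $C\leftarrow B\to D$ in $\bp$.

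Given these two inputs, the first step is to observe that binary coproducts exist: for any two blueprints $C$ and $D$, the pushout $C\otimes_{\{1\}}D$ (with respect to the unique morphisms from the initial object) serves as the coproduct $C\amalg D$. Iterating this gives finite coproducts. The second step is to obtain coequalizers: for parallel morphisms $f,g\colon B\to C$, the coequalizer of $f$ and $g$ is computed as the pushout of $f$ and $g$ viewed as a pair of arrows with common domain and common codomain; indeed, the universal property of a pushout over such a pair is exactly the universal property of a coequalizer. Since binary coproducts and coequalizers already suffice to construct finite colimits (every finite diagram yields a coequalizer of two maps between finite coproducts over its object and arrow sets), this finishes the argument.

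The main (very mild) obstacle is merely to make sure the pushout constructed in the preceding proposition really is the colimit of the shape one needs here; but this is already recorded there, since the universal property verified against commutative squares $f\circ\gamma=g\circ\delta$ is the defining property of a pushout. No further computation specific to blueprints is required, and in particular no new ``construction'' of a coequalizer or coproduct needs to be given by hand. Consequently the proof consists essentially of citing the two prior results and the standard categorical reduction, which justifies the \texttt{\textbackslash qed} at the end of the statement.
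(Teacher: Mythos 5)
Your overall strategy---initial object plus pushouts (the tensor products of the preceding proposition) give all finite colimits---is exactly the argument the paper intends with its one-line remark before the corollary, and your first step is fine: since $\{1\}$ is initial, the pushout $C\otimes_{\{1\}}D$ is indeed the coproduct $C\amalg D$. The genuine gap is in your coequalizer step. The pushout of $f,g\colon B\to C$ regarded as a span $C\xleftarrow{f}B\xrightarrow{g}C$ is \emph{not} the coequalizer of the parallel pair $(f,g)$: the pushout comes equipped with \emph{two} maps $i_1,i_2\colon C\to P$ satisfying $i_1\circ f=i_2\circ g$, whereas the coequalizer comes with a \emph{single} map $q\colon C\to Q$ satisfying $q\circ f=q\circ g$, and these universal properties differ. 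Concretely, take $B=\{1\}$ and $f=g$ the unique morphism to $C$: the pushout is the coproduct $C\otimes_{\{1\}}C$, while the coequalizer of the (equal) pair is $C$ itself. Put differently, a parallel pair $\bullet\rightrightarrows\bullet$ is not a span, so the tensor-product proposition, which constructs colimits of spans only, cannot be cited verbatim for coequalizers.

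The repair is standard and uses the coproducts you have already built: the coequalizer of $f,g\colon B\to C$ is the pushout of the span $C\xleftarrow{[\id_C,\,f]}C\amalg B\xrightarrow{[\id_C,\,g]}C$, where $C\amalg B=C\otimes_{\{1\}}B$ (equivalently, the pushout of $[f,g]\colon B\amalg B\to C$ along the fold map $B\amalg B\to B$); one checks directly that the two structure maps of this pushout agree on $C$ and that the resulting single map $C\to P$ has the coequalizer's universal property. With that correction your reduction---finite coproducts together with coequalizers yield all finite colimits---goes through and coincides with the paper's justification that the corollary follows from the existence of the initial object and of tensor products.
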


Recall that a directed diagram in a category is a commutative diagram $\cD$ where for every pair of objects $B_i$ and $B_j$ in $\cD$, there are an object $B_k$ and morphisms $B_i\to B_k$ and $B_j\to B_k$ in $\cD$.

\begin{prop}\label{prop:colimits_of_directed_diagrams_in_bp}
 The category $\bp$ contains colimits of directed diagrams. If $\cD$ is a directed diagram in one of the subcategories $\cM$, $\cM_0$, $\bp_\canc$, $\bp_\proper$, $\bp_\inv$, $\bp_0$, $\SRings$ or $\Rings$, then the colimit of $\cD$ in $\bp$ is also in this subcategory of $\bp$ and equals the colimit of $\cD$ as a diagram in this subcategory.
 \end{prop}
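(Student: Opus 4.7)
The plan is to construct $\colim\cD$ concretely in $\bp$ and observe that its pre-addition admits a finite-stage description, from which all the subcategory assertions follow. Write $\cD=\{B_i=\bpquot{A_i}{\cR_i}\}_{i\in I}$ with transition morphisms $f_{ij}:B_i\to B_j$. Let $A=\colim A_i$ be the directed colimit in $\cM$, realized as the quotient of $\coprod A_i$ by the identifications $a\sim f_{ij}(a)$, and write $\phi_i:A_i\to A$ for the canonical maps, which induce semiring maps $\N[A_i]\to\N[A]$. Define $\cR\subset\N[A]\times\N[A]$ to be the union, over all $i\in I$, of the images of $\cR_i$ under these maps.

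The key step is to verify that $\cR$ is itself a pre-addition, not merely a set of generators. Axioms (A1) and (A2) hold because they already hold inside each $\cR_i$. For (A3)--(A5), two relations in $\cR$ come from some $\cR_i$ and $\cR_j$; by directedness there is an index $k$ with morphisms to $B_k$ from both, and one pushes both relations forward to $\cR_k$ and applies the axiom there to obtain a relation in $\cR_k$ whose image in $\cR$ is the desired conclusion. Thus $B:=\bpquot{A}{\cR}$ is a blueprint. Its universal property is immediate: a compatible cone $\{f_i:B_i\to C\}$ determines a unique multiplicative $f:A\to C$ by the colimit property of $A$ in $\cM$, and $f$ maps $\cR$ into the pre-addition of $C$ because it does so on each generating piece.

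The construction yields the following descent property: $\sum a_k\=\sum b_l$ holds in $B$ if and only if at some stage $i$ there are lifts $a'_k,b'_l\in A_i$ with $\sum a'_k\=\sum b'_l$ in $\cR_i$. This is the engine for all subcategory claims. For $\bp_\canc$, a cancellation hypothesis in $B$ descends to some $B_i$, where (A6) applies, and the conclusion lifts back to $B$. For $\bp_\proper$, an equality $a\= b$ with $a,b\in A$ descends to $\cR_i$, yielding $a=b$ in $A_i$ and hence in $A$. Axioms (A7) and (A8) transfer analogously. For $\cM$ and $\cM_0$, the pre-addition $\cR$ is trivial (respectively, generated only by $0\=\zero$) since each $\cR_i$ is, and $A$ is the colimit in $\cM$ (resp.\ $\cM_0$), proving the coincidence of colimits directly. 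For $\SRings$ and $\Rings$, the requirement that every $a+b$ admit a witness $c\in A$ with $a+b\= c$ holds because any $a,b\in A$ have common lifts in some $A_i$, where such a witness already exists. In every case where Proposition \ref{prop:adjoint_functors} applies, $B$ already lies in the relevant subcategory $\cC$, so the left-adjoint $\cG$ acts as the identity and $\colim_\cC\cD\simeq\cG(\colim_\bp\cD)\simeq\colim_\bp\cD$.

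The main technical point is the closure of $\cR$ under (A3)--(A5). This is precisely where directedness is used essentially: it lets any two-input axiom be reduced to a single stage $B_k$ via a common upper bound. Without directedness, combining two relations from distinct $\cR_i$'s would force one to pass to a strictly larger, generated pre-addition and trace relations back to finite stages by induction on the derivation, which is more delicate.
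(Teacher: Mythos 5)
Your proposal is correct and takes essentially the same route as the paper: both construct the directed colimit explicitly, endow it with the pre-addition ``a relation holds iff it holds (for some lifts) at some stage of the diagram,'' verify the universal property, and check the defining axioms of each subcategory by descending to a finite stage via directedness. The only difference is cosmetic --- the paper realizes the underlying colimit by compatible families indexed over cofinal subsets $J(i)$, while you use the disjoint union of the $A_i$ modulo the identifications $a\sim f_{ij}(a)$ --- and your finite-stage descent arguments merely spell out what the paper dismisses as ``easily verified.''
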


\begin{proof}
 Let $\cD=\{B_i\}_{i\in I}$ be a commutative diagram of blueprints and morphisms indexed by a directed set $I$, i.e.\ for every $i,j\in I$, there is a $k\in I$ and (unique) morphisms $f_i:B_i\to B_k$ and $f_j:B_j\to B_k$ in $\cD$. For $i\in I$, define $J(i)$ as the cofinal directed subset $\{k\in I\mid \exists f:B_i\to B_k\text{ in }\cD\}$ of $I$, and let $\cD(i)$ be the full subdiagram of $\cD$ that contains precisely $\{B_i\}_{i\in J(i)}$. Then the colimit of $\cD$ can be represented by
 $$ \colim \cD \quad = \quad \coprod_{i\in I}\ \biggl\{ (a_j)\in \prod_{j\in J(i)}B_j\biggl| \forall f: B_j\to B_k\text{ in } \cD(i),\, a_k=f(a_j)\biggr\}\; \biggl/ \; \sim $$ 
 where two elements $(a_j)_{j\in J(i_1)}$ and  $(b_j)_{j\in J(i_2)}$ are equivalent if $a_j=b_j$ for all $j\in J(i_1)\cap J(i_2)$. The pre-addition for $\colim \cD$ is defined by $\sum(c_{j,k})_{j\in J(i(k))}\=\sum(d_{j,l})_{j\in J(i(l))}$ if and only if there is a $j$ in the intersection of the $J(i(k))$ and $J(i(l))$ such that $\sum c_{j,k}\=\sum d_{j,l}$.

 The canonical morphisms $\iota_i:B_i\to\colim \cD$ maps $a_i \in B_i$ to $(f(a_i)\mid f:B_i\to B_k\text{ in }\cD(i))$. Given a family of monoid morphisms $g_i:B_i\to C$ that commute with all morphisms in $\cD$, the map $\displaystyle g:\colim \cD \to B$ that sends an element $(a_j)_{j\in J(i)}$ to $g_i(a_i)$ is the unique morphism that satisfies the universal property of the colimit of $\cD$.  

 It is easily verified that the defining properties of the subcategories $\cM$, $\cM_0$, $\bp_\canc$, $\bp_\proper$, $\bp_\inv$, $\bp_0$, $\SRings$ and $\Rings$ are preserved under the above construction. Thus we have the second statement of the proposition.
\end{proof}


\section{Congruences and ideals}
\label{section:congruences_and_ideals}

In this section, we introduce the notions of congruences and ideals. Since in the category of blueprints, like in the category of rings, the initial object is not isomorphic to the terminal object, we cannot define a categorical kernel as the difference kernel with the zero morphism, but have to invent a notion that represents quotients of blueprints. This desire will be fulfilled by congruences, which can be seen as a generalization of congruences of monoids. Congruences are accompanied by their absorbing ideals, which are sets of elements that map to a zero or, more generally, to an absorbing element (to be defined below) of the quotient. This leads to the definition of an ideal as an absorbing ideal of a congruence.


\subsection{Congruences}
\label{subsection:congruences}

 Throughout this section, we let $B=\bpquot A\cR$ be a blueprint. We begin with introducing some preliminary notions. A \emph{subblueprint of $B$} is a submonoid $A'$ of $A$ together with the restriction $\cR\vert_{A'}$ of $\cR$ to $A'$, i.e.\ the pre-addition
 $$\cR\vert_{A'} \quad =\quad \{ \ \sum a_i\=\sum b_j \ | \ a_i,b_j\in A' \ \}$$
 for $A'$. We call $\bpquot{A'}{\cR\vert_{A'}}$ the \emph{subblueprint over $A'$}. It satisfies the universal property that every morphism $C\to B$ whose image is contained in $A'$ factorizes uniquely through the natural inclusion $\bpquot{A'}{\cR\vert_{A'}}\hookrightarrow B$. 

 We define the \emph{integral subblueprint of $B$} as the subblueprint over $A^\int$, which is the multiplicative subset of \emph{integral elements of $A$}, i.e.\ the subset of all $a\in A$ such that multiplication by $a$ defines an injective map $A\to A$. We say that $B$ is an \emph{integral blueprint} if $B$ is proper, if $1\n=\zero$ and if every $a\in B$ is either integral or a zero. A blueprint $B$ is said to be \emph{without zero divisors} if the set $S=\{a\in B|a\n=\zero\}$ is a multiplicative subset of $B$. Every integral blueprint is without zero divisors, but the example of the monoid $B=\{0,e,1\}$ with a zero $0\=\zero$ and an idempotent $e^2=e$ shows that blueprints without zero divisors do not have to be integral.

 We define the \emph{units of $B$} as the subblueprint over $A^\times$, which is the group of invertible elements of $A$. We say that $B$ is a \emph{blue field} if $B$ is proper, if $1\n=\zero$ and if every $a\in B$ is either a unit or a zero.  If $B$ is without zero divisors, then we define the \emph{quotient field $\Quot B$ of $B$} as $S^{-1}B$ where $S=\{a\in B|a\n=\zero\}$. It is clear that $\Quot B$ is a blue field.

 A \emph{quotient of $B=\bpquot A\cR$} is a blueprint $B'=\bpquot{A'}{\cR'}$ together with a morphism $B\to B'$ such that the map $A\to A'$ between the underlying monoids is surjective and such that the image of $\cR'$ in $\cR$ generates $\cR$ as a pre-addition. Note that the word ``quotient'' is used with different meanings: the quotient field of an integral blueprint $B$ is not a quotient of $B$ unless $B$ is a blue field.

 If $f: A\to A'$ is a surjective morphism of monoids, then $\cR'=\gen{\{\sum f(a_i)\=\sum f(b_j)|\sum a_i=\sum b_j\text{ in }B\}}$ is called the \emph{quotient pre-addition for $A'$}. The blueprint $B'=\bpquot{A'}{\cR'}$ together with the induced morphism $B\to B'$ is the unique quotient whose map between the underlying monoids is $A\to A'$. Thus quotients of $B$ are characterized by surjections from $A$ into other monoids. 

 If $B$ is with inverses or with a zero, then a quotient of $B$ has the same properties. This is in general not true for cancellative or proper blueprints. However, we will be mainly interested in proper quotients of blueprints. To characterize the proper quotients of a blueprint, we introduce the notion of a congruence.

 For an equivalence relation $\sim$ on $A$, we define the \emph{linear extension $\sim_\N$ of $\sim$ to $\N[A]$} as the equivalence relation on $\N[A]$ that is generated by $\sum a_i \sim_\N\sum b_i$ if $a_i\sim b_i$ for all $i$. Further define the equivalence relation $\sim_\cR$ as the smallest equivalence relation containing both $\cR$ and $\sim_\N$.

\begin{df}\label{def:congruence}
 A \emph{congruence on $B$} is an equivalence relation $\sim$ on $A$ that satisfies the following properties.
 \begin{enumerate}
  \item[(C1)] The equivalence relation $\sim_\N$ is a pre-addition.
  \item[(C2)] The restriction of $\sim_\cR$ to $A$ equals $\sim$.
 \end{enumerate}
\end{df}

These axioms are equivalent to the following explicit conditions.

\begin{lemma}\label{lemma:equivalent_definition_of_a_congruence}
 An equivalence relation $\sim$ is a congruence if and only if it satisfies the following two conditions for all $a,b,c,d\in A$.
 \begin{enumerate}
  \item[\textup{(C1)$^*$}\!\!] \ If $a\sim b$ and $c\sim d$, then $ac\sim bd$.
  \item[\textup{(C2)$^*$}\!\!] \ If there exists a sequence
        $$ a \ \= \ \sum c_{1,k} \ \sim_\N \ \sum d_{1,k} \ \= \ \sum c_{2,k} \ \sim_\N \quad \dotsb \quad \sim_\N \ \sum d_{n,k} \ \= \ b $$
        with $c_{i,k},d_{i,k}\in A$, then $a\sim b$.
 \end{enumerate}
 More precisely, axiom \textup{(C1)} is equivalent to condition \textup{(C1)}$^*$ and axiom \textup{(C2)} is equivalent to condition \textup{(C2)}$^*$.
\end{lemma}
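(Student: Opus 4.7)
The plan is to prove the two equivalences \textup{(C1)}$\Leftrightarrow$\textup{(C1)}$^*$ and \textup{(C2)}$\Leftrightarrow$\textup{(C2)}$^*$ separately, since each condition of the lemma addresses one axiom of Definition \ref{def:congruence}.

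For \textup{(C1)}$\Leftrightarrow$\textup{(C1)}$^*$, I would observe that $\sim_\N$ is by construction already an equivalence relation on $\N[A]$, so axioms \textup{(A1)}--\textup{(A3)} of a pre-addition hold automatically. Axiom \textup{(A4)} (additivity) is also automatic: if $\sum a_i\sim_\N\sum b_i$ and $\sum c_k\sim_\N\sum d_k$, then concatenating the two chains of generators witnessing these relations and using reflexivity on the other summand yields a chain from $\sum a_i+\sum c_k$ to $\sum b_i+\sum d_k$. Hence the only non-trivial axiom is \textup{(A5)} (multiplicativity), and I would show this is equivalent to \textup{(C1)}$^*$: restricting \textup{(A5)} to single-term sums $a,b,c,d\in A\subset\N[A]$ gives exactly \textup{(C1)}$^*$, while conversely \textup{(C1)}$^*$ propagates to multiplicativity of $\sim_\N$ by an easy induction along the defining chains of $\sim_\N$ (if $\sum a_i\sim b_i$ is a generator and $c\in A$, then $a_ic\sim b_ic$ by \textup{(C1)}$^*$, so $(\sum a_i)c\sim_\N(\sum b_i)c$; iterate and combine).

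For \textup{(C2)}$\Leftrightarrow$\textup{(C2)}$^*$, the key preliminary observation is that the restriction of $\sim_\N$ to $A$ coincides with $\sim$. This is because the generators of $\sim_\N$ preserve the length of formal sums, so any chain of generators connecting two single elements $a,b\in A$ must consist entirely of length-one sums, i.e.\ of single applications of $\sim$. Given this, \textup{(C2)} becomes the statement that $\sim_\cR|_A\subseteq\sim$ (the reverse inclusion being automatic). Since $\sim_\cR$ is the smallest equivalence relation containing $\cR$ and $\sim_\N$, elements $a,b\in A$ satisfy $a\sim_\cR b$ if and only if there exists a finite zig-zag in $\N[A]$ from $a$ to $b$ whose consecutive steps lie alternately in $\cR$ and $\sim_\N$. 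Using reflexivity (insertion of trivial steps) and transitivity within each of the two relations (merging consecutive same-type steps), any such zig-zag can be normalized to begin and end with $\cR$-steps, which is precisely the chain appearing in \textup{(C2)}$^*$. Hence \textup{(C2)} says exactly that such chains force $a\sim b$, which is \textup{(C2)}$^*$.

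The main obstacle I anticipate is the length-preservation observation underlying the identification $\sim_\N|_A=\sim$; it is easy once noticed but is the crucial bookkeeping point that allows the abstract closure conditions of Definition \ref{def:congruence} to be translated into the concrete zig-zag form of \textup{(C1)}$^*$ and \textup{(C2)}$^*$. The remainder consists of routine verifications about chains of generators.
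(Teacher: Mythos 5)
Your proposal is correct and takes essentially the same route as the paper's proof: axioms (A1)--(A4) hold for $\sim_\N$ automatically, (C1) reduces to multiplicativity, which is passed back and forth to (C1)$^*$ via the identification $\sim_\N\vert_A=\sim$ and termwise propagation along generators, and (C2)$\Leftrightarrow$(C2)$^*$ follows from the alternating-chain description of $\sim_\cR$. Your length-preservation argument for $\sim_\N\vert_A=\sim$ and the explicit normalization of zig-zags to begin and end with $\=$-steps are details the paper asserts as evident, so you are if anything slightly more careful.
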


\begin{proof}
 We begin with the equivalence of axiom \textup{(C1)} with condition \textup{(C1)}$^*$. Since the restriction of $\sim_\N$ to $A$ is $\sim$, it is clear that \textup{(C1)}$^*$ follows from the multiplicativity of the pre-addition $\sim_\N$. Conversely, assume \textup{(C1)}$^*$. Note that $\sim_\N$ satisfies axioms \textup{(A1)}--\textup{(A4)} by its definition. Thus we have only to verify the multiplicativity of $\sim_\N$. Let $\sum a_i\sim_\N\sum b_j$ and $\sum c_k\sim_\N\sum d_l$. Then by \textup{(C1)}$^*$, $a_ic_k\sim b_jd_l$ and thus $\sum a_ic_k\sim_\N\sum b_jd_l$, which shows axiom \textup{(C1)} of a congruence.

 We proceed with the equivalence of axiom \textup{(C2)} with condition \textup{(C2)}$^*$. By the definition of $\sim_\cR$, we have that $a\sim_\cR b$ if and only if there exists a sequence of the form
 $$ a \ \= \ \sum c_{1,k} \ \sim_\N \ \sum d_{1,k} \ \= \ \sum c_{2,k} \ \sim_\N \quad \dotsb \quad \sim_\N \ \sum d_{n,k} \ \= \ b. $$
 Given a sequence as above, this means that $a\sim_\cR b$, and thus, by axiom \textup{(C2)}, that $a\sim b$. This shows that \textup{(C2)} implies \textup{(C2)}$^*$. Notice that $\sim_\cR$ contains $\sim$ by its definition. Thus to show the converse implication, we have to deduce $a\sim_\cR b$ from $a\sim b$. But this is clear: if $a\sim_\cR b$, then there is a sequence as above and thus $a\sim b$ by \textup{(C2)}$^*$. 
\end{proof}

\begin{df}
 Let $f:B\to C$ be a morphism of blueprints. The \emph{kernel of $f$} is the relation $\sim_f$ on $B$ that is defined by $a\sim_f b$ if and only if $f(a)\= f(b)$.
\end{df}

The following two propositions show that kernels are congruences and, conversely, that congruences are kernels of a quotient map.

\begin{prop}\label{prop:kernels_are_congruences}
 Let $f:B\to C$ be a morphism of blueprints. Then its kernel $\sim_f$ is a congruence.
\end{prop}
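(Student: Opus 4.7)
The plan is to use Lemma \ref{lemma:equivalent_definition_of_a_congruence} and verify conditions \textup{(C1)}$^*$ and \textup{(C2)}$^*$ directly, after first observing that $\sim_f$ is an equivalence relation. Reflexivity, symmetry and transitivity of $\sim_f$ are inherited immediately from axioms \textup{(A1)}--\textup{(A3)} of the pre-addition $\cR_C$ on $C$.

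For condition \textup{(C1)}$^*$, suppose $a\sim_f b$ and $c\sim_f d$, so $f(a)\=f(b)$ and $f(c)\=f(d)$ in $C$. Since $f$ is in particular a monoid morphism, $f(ac)=f(a)f(c)$ and $f(bd)=f(b)f(d)$, and the multiplicativity axiom \textup{(A5)} for $\cR_C$ gives $f(a)f(c)\=f(b)f(d)$. Therefore $f(ac)\=f(bd)$, i.e.\ $ac\sim_f bd$.

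For condition \textup{(C2)}$^*$, assume we are given a sequence
\[
 a \ \= \ \sum c_{1,k} \ \sim_{f,\N} \ \sum d_{1,k} \ \= \ \sum c_{2,k} \ \sim_{f,\N} \quad \dotsb \quad \sim_{f,\N} \ \sum d_{n,k} \ \= \ b
\]
in $B$. I would apply $f$ termwise. Each equivalence $\sum c_{i,k}\=\sum d_{i,k}'$ of formal sums appearing at a $\=$-step is a relation in $\cR_B$, hence by the definition of a blueprint morphism its image $\sum f(c_{i,k})\=\sum f(d_{i,k}')$ lies in $\cR_C$. At each $\sim_{f,\N}$-step we have $c_{i,k}\sim_f d_{i,k}$ componentwise, so by definition $f(c_{i,k})\=f(d_{i,k})$ in $C$, and additivity \textup{(A4)} for $\cR_C$ promotes this to $\sum f(c_{i,k})\=\sum f(d_{i,k})$. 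Chaining all of these together by transitivity \textup{(A3)} in $C$ yields $f(a)\=f(b)$, i.e.\ $a\sim_f b$.

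There is no serious obstacle; the only thing to be a little careful about is not conflating the relation $\sim_{f,\N}$ on $\N[A]$ with the pre-addition $\cR_C$ on $C$, and seeing that axioms \textup{(A4)} and \textup{(A5)} applied to $\cR_C$ translate each local step of the sequence into a corresponding step in $C$ that can be concatenated by transitivity.
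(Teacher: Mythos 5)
Your proof is correct and follows essentially the same route as the paper: after noting that $\sim_f$ is an equivalence relation, you verify conditions \textup{(C1)}$^*$ and \textup{(C2)}$^*$ of Lemma \ref{lemma:equivalent_definition_of_a_congruence}, using multiplicativity of $f$ and of $\cR_C$ for \textup{(C1)}$^*$, and pushing the sequence forward through $f$ (with additivity and transitivity of $\cR_C$) for \textup{(C2)}$^*$. Your write-up is in fact slightly more explicit than the paper's about why each $\sim_{f,\N}$-step becomes a $\=$-step in $C$.
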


\begin{proof}
 Since $\=$ is an equivalence relation, $\sim_f$ is so, too. We show \textup{(C1)}$^*$. Let $a\sim_f b$ and $c\sim_f d$, i.e.\ $f(a)\= f(b)$ and $f(c)\= f(d)$. Then $f(ac)\= f(a)f(c)\= f(b) f(d)\=f(bd)$ by the multiplicativity of $f$ and $\=$, and thus $ac\sim_f bd$.
 
 We show \textup{(C2)}$^*$. Consider a sequence 
 $$ a \ \= \ \sum c_{1,k} \ \sim_{f,\N} \quad \dotsb \quad \sim_{f,\N} \ \sum d_{n,k} \ \= \ b, $$
 then it follows that 
 $$ f(a) \ \= \ \sum f(c_{1,k}) \ \= \quad \dotsb \quad \= \ \sum f(d_{n,k}) \ \= \ f(b) $$
 and thus $f(a)\= f(b)$. This shows that $a\sim_f b$ as desired.
\end{proof}

Conversely, we can divide by congruences. Let $\sim$ be a congruence of $B$. Then we define the quotient $B/\sim$ as the blueprint $\bpquot{(A/\sim)}{\cR_\sim}$ where $A/\sim$ is the quotient monoid and $\cR_\sim$ is the pre-addition generated by $\{\sum[a_i]\=\sum[b_j]| \sum a_i\=\sum b_j\text{ in }B\}$. Note that $A/\sim$ is indeed a monoid since condition \textup{(C1)}$^*$ guarantees that the multiplication on equivalence classes is well-defined.

\begin{prop}\label{prop:congruences_are_kernels}
 Let $\sim$ be a congruence on $B$ and let $p:B\to B/\sim$ be the canonical projection. Then $\sum[a_i]=\sum[b_j]$ if and only if there is a sequence 
 $$ \sum a_i\ \=\ \sum c_{1,k} \ \sim_\N \quad \dotsb \quad \sim_\N\ \sum d_{n,k} \ \= \ \sum b_j. $$
 Consequently, $B/\sim$ is proper and $\sim$ is the kernel of $p$.
\end{prop}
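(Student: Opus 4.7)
The strategy is to relate the pre-addition $\cR_\sim$ on $A/\sim$ to the relation $\sim_\cR$ on $\N[A]$ via the semiring morphism $\phi : \N[A] \to \N[A/\sim]$ induced by $a \mapsto [a]$. First I observe, as in the proof of Lemma~\ref{lemma:equivalent_definition_of_a_congruence}, that the chain condition stated in the proposition is exactly a normal form for membership in $\sim_\cR$, viewed now as a relation on all of $\N[A]$: the argument used there for single elements works verbatim for arbitrary formal sums. Moreover, $\sim_\cR$ is itself a pre-addition on $\N[A]$, since both $\cR$ and $\sim_\N$ satisfy axioms \textup{(A1)}--\textup{(A5)} (the latter by axiom \textup{(C1)} of a congruence), and concatenating chains of either kind under coordinatewise addition or multiplication preserves the chain structure, so the transitive closure is automatically closed under \textup{(A4)} and \textup{(A5)}.

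With this in hand, the forward (easy) direction of the iff follows by applying $\phi$ stepwise to a chain: each $\cR$-step in $B$ becomes a defining generator of $\cR_\sim$, while each $\sim_\N$-step becomes a literal equality in $\N[A/\sim]$, hence is absorbed by reflexivity of $\cR_\sim$; transitivity then delivers $\sum [a_i] \= \sum [b_j]$. For the reverse direction I will establish that $\phi(\sim_\cR) = \cR_\sim$ as relations on $\N[A/\sim]$. The nontrivial inclusion $\cR_\sim \subseteq \phi(\sim_\cR)$ is obtained by checking that $\phi(\sim_\cR)$ contains all the generators $\phi(\cR)$ and is itself a pre-addition on $A/\sim$. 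Axioms \textup{(A1)}, \textup{(A2)}, \textup{(A4)} and \textup{(A5)} transfer directly from the corresponding axioms of $\sim_\cR$ along the semiring morphism $\phi$; the only subtle point is transitivity, where two chains with a common endpoint in $\N[A/\sim]$ must be glued together back in $\N[A]$. The key observation is that if two lifts $Y, Y' \in \N[A]$ satisfy $\phi(Y) = \phi(Y')$, then $Y$ and $Y'$ differ only by a term-by-term replacement by $\sim$-equivalent elements, so $Y \sim_\N Y'$ and the two chains may be concatenated through this bridging step. Once $\cR_\sim \subseteq \phi(\sim_\cR)$ is known, an instance $\sum [a_i] \= \sum [b_j]$ yields \emph{some} lifts $X, Y$ with $(X, Y) \in \sim_\cR$, and one final $\sim_\N$-step on each end replaces them by the originally given $\sum a_i$ and $\sum b_j$.

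The two remaining statements follow readily from the iff. For properness: if $[a] \= [b]$ in $B/\sim$ with $a, b \in A$, the chain characterization produces $(a, b) \in \sim_\cR$ as single-term formal sums, and axiom \textup{(C2)} of a congruence asserts that the restriction of $\sim_\cR$ to $A$ is exactly $\sim$, so $a \sim b$ and hence $[a] = [b]$. The kernel assertion is then formal: $a \sim_p b$ means $[a] \= [b]$ in $B/\sim$, which by properness is $[a] = [b]$, i.e.\ $a \sim b$. The main obstacle throughout is bookkeeping the distinction between $\sim_\cR$ on $\N[A]$ and $\cR_\sim$ on $\N[A/\sim]$; the two are related by the direct image under $\phi$, but since $\phi$ collapses elements of $A$ to equivalence classes, care is needed when lifting chains and in particular when verifying transitivity of $\phi(\sim_\cR)$.
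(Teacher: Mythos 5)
Your argument is correct and is essentially the paper's proof in different packaging: your pushforward $\phi(\sim_\cR)$ is exactly the relation $\cR_\sim'$ of chain-connected pairs that the paper shows to be a pre-addition by the same padding-and-componentwise-combination of chains, the forward direction is handled identically via transitivity of $\cR_\sim$, and properness/kernel follow from \textup{(C2)} just as in the paper. The only visible difference is that you spell out the transitivity-of-the-image (``bridging'') step that the paper treats as clear, which is a welcome clarification rather than a different route.
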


\begin{proof}
 Since $\cR_\sim$ is transitive, we have $\sum[a_i]\=\sum[b_j]$ if there is a sequence of the form 
 $$ \sum a_i\ \=\ \sum c_{1,k} \ \sim_\N \quad \dotsb \quad \sim_\N \ \sum d_{n,k} \ \= \ \sum b_j. $$
 If we show that the set $\cR_\sim'$ of relations $\sum[a_i]\=\sum[b_j]$ that come from such a sequence form a pre-addition, then it equals $\cR_\sim$ since $\cR_\sim$ is generated by a subset of $\cR_\sim'$. It is clear that $\cR_\sim'$ is an equivalence relation. So we are left with showing additivity and multiplicativity.

 Consider two sequences
 $$ \sum a_i\ \=\ \sum c_{1,k} \ \sim_\N \quad \dotsb \quad \sim_\N \ \sum d_{n,k} \ \= \ \sum b_j $$
 and 
 $$ \sum \tilde a_{\tilde i}\ \=\ \sum \tilde c_{1,\tilde k} \ \sim_\N  \quad \dotsb \quad \sim_\N \ \sum \tilde d_{\tilde n,\tilde k} \ \= \ \sum \tilde b_{\tilde j}. $$
 Since we have the trivial relations $\sum a_i\=\sum a_i$ and $\sum a_i\sim_\N\sum a_i$, we may insert some trivial relations to make sure that the two sequences have the same length, this means, we may assume that $\tilde n=n$. Since both $\=$ and $\sim_\N$ are pre-additions, we can form the sequences
 $$  \sum a_i+\sum \tilde a_{\tilde i}\ \=\ \sum c_{1,k}+\sum \tilde c_{1,\tilde k} \ \sim_\N \  \dotsb \quad \sim_\N \ \sum d_{n,k}+\sum \tilde d_{n,\tilde k} \ \= \ \sum b_j+\sum \tilde b_{\tilde j} $$
 and 
 $$  \sum a_i\tilde a_{\tilde i}\ \=\ \sum c_{1,k}\tilde c_{1,\tilde k} \ \sim_\N \  \dotsb \quad \sim_\N \ \sum d_{n,k}\tilde d_{n,\tilde k} \ \= \ \sum b_j\tilde b_{\tilde j}, $$
 which shows the additivity and multiplicativity of $\cR_\sim'$. 

 We proceed to show that $B/\sim$ is proper and that $\sim$ is the kernel of $p:B\to B/\sim$, i.e.\ we show that $p(a)\=p(b)$ implies $p(a)=p(b)$ and $p(a)=p(b)$ implies $a\sim b$, respectively. Since, conversely, $p(a)\=p(b)$ follows from $p(a)=p(b)$, and $p(a)=p(b)$ follows from $a\sim b$, everything is proven if we show that $a\sim b$ follows from $p(a)\=p(b)$. By what we have shown above, $p(a)\=p(b)$ implies that there is a sequence
 $$ a\ \=\ \sum c_{1,k} \ \sim_\N \quad \dotsb \quad \sim_\N \ \sum d_{n,k} \ \= \ b, $$
 and property \textup{(C2)}$^*$ of a congruence implies in turn that $a\sim b$, which completes the proof.
\end{proof}

We prove some further properties of congruences.

\begin{lemma}\label{lemma:easy_properties_of_congruences}
 If $\sim$ is a congruence on $B$, then it satisfies the following properties.
 \begin{enumerate}
  \item If $a\= b$, then $a\sim b$.
  \item If $a+\sum c_k\=b+\sum d_l$ and $c_k\sim e\sim d_l$ where $e\=\zero$ is a zero of $B$, then $a\sim b$.
 \end{enumerate}
\end{lemma}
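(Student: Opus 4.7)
\medskip

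\noindent\textbf{Proof plan.} For part (i), I would invoke axiom \textup{(C2)} directly. Since $a\= b$ means $(a,b)\in\cR$, and $\cR$ is by construction contained in $\sim_\cR$, we have $a\sim_\cR b$. Both $a$ and $b$ lie in $A$, so the restriction property \textup{(C2)} gives $a\sim b$. Alternatively, one can use the equivalent formulation \textup{(C2)$^*$} with the trivial chain
\[ a \ \= \ a \ \sim_\N \ a \ \= \ b, \]
where the first and last steps use reflexivity of $\=$ and the hypothesis, respectively.

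For part (ii), my strategy is to construct an explicit chain of the form required by condition \textup{(C2)$^*$} linking $a$ to $b$. Write $\sum c_k=c_1+\dotsb+c_m$ and $\sum d_l=d_1+\dotsb+d_n$, and let $m\cdot e$ denote the sum of $m$ copies of the zero $e$. First, iterating Lemma \ref{lemma_bp-fact}\eqref{bp-fact4} yields the identities $a\=a+m\cdot e$ and $b+n\cdot e\=b$ in $B$. Next, the hypothesis $c_k\sim e$ for every $k$ together with the reflexivity $a\sim_\N a$ and the additivity of $\sim_\N$ gives $a+m\cdot e\sim_\N a+\sum c_k$; analogously, $b+\sum d_l\sim_\N b+n\cdot e$. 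Combined with the given relation $a+\sum c_k\=b+\sum d_l$, the entire chain reads
\[ a\ \=\ a+m\cdot e\ \sim_\N\ a+\sum c_k\ \=\ b+\sum d_l\ \sim_\N\ b+n\cdot e\ \=\ b, \]
which starts with $\=$, ends with $\=$, and alternates with $\sim_\N$. Applying \textup{(C2)$^*$} then produces $a\sim b$.

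I do not foresee a genuine obstacle here; both parts reduce to bookkeeping. The only mildly delicate point in (ii) is ensuring that the chain satisfies the strict alternation pattern demanded by \textup{(C2)$^*$}, which is handled by sandwiching the given $\=$-relation between two $\sim_\N$-steps and then capping the chain with two further $\=$-steps built from the absorbing-zero property of $e$ via Lemma \ref{lemma_bp-fact}\eqref{bp-fact4}.
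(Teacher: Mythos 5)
Your proposal is correct and follows essentially the same route as the paper: part (i) is an immediate application of \textup{(C2)}/\textup{(C2)$^*$}, and for part (ii) the paper uses exactly the chain $a\=a+\sum e\sim_\N a+\sum c_k\=b+\sum d_l\sim_\N b+\sum e\=b$, which is your chain with $m\cdot e$ and $n\cdot e$ written as $\sum e$. Your extra care about the alternation pattern and the explicit appeal to Lemma \ref{lemma_bp-fact}\eqref{bp-fact4} only makes explicit what the paper leaves implicit.
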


\begin{proof}
 The first statement follows immediately from \textup{(C2)}$^*$. The second statement follows from \textup{(C2)}$^*$ applied to the sequence
 \[ 
   a \ \= \ a +\sum e \ \sim_\N \ a+\sum c_k \ \= \ b+\sum d_l \ \sim_\N b+\sum e \ \= \ b. \qedhere 
 \]
\end{proof}

\begin{ex}
 Every blueprint has a maximal and a minimal congruence. The maximal congruence is the relation defined by $a\sim b$ for all $a,b\in B$. The relation $\sim$ is indeed a congruence since it is the kernel of the morphism $B\to \{1\}$ into the trivial blueprint with $1\=\zero$ (cf.\ Proposition \ref{prop:kernels_are_congruences}).

 The minimal congruence is the equivalence relation on $B$ defined by $a\sim b$ if and only if $a\= b$. We described the quotient $B/\sim$ in Lemma \ref{lemma_proper}, which is nothing else that $B_\proper$. Since $\sim$ is the kernel of the quotient map $B\to B/\sim$, it follows a posteriori that $\sim$ is a congruence. By Lemma \ref{lemma:easy_properties_of_congruences}, $\sim$ is contained in any other congruence on $B$.
\end{ex}


\subsection{Ideals}
\label{subsection:ideals}

 We introduce ideals of blueprints, led by the idea that ideals should coincide with the inverse images of $0$ under morphisms. Since not every blueprint has a zero, we have find a generalization of this definition. This will be motivated by the notion of an absorbing ideal of a congruence as defined below.

\begin{df}
 Let $\sim$ be a congruence on $B$. The \emph{absorbing ideal of $\sim$} is the set 
 $$ I_\sim \ = \ \{\ e\in B\ |\ eb\sim e\text{ for all }b\in B\ \}. $$
 Let $f:B\to C$ be a morphism of blueprints. The \emph{absorbing ideal of $f$} is $I_f=I_{\sim_f}$ where $\sim_f$ is the kernel of $f$.
\end{df}

We collect some immediate properties of $I_\sim$.

\begin{lemma}\label{lemma:absorbing_ideal_is_an_equivalence_class}
 The absorbing ideal $I_\sim$ is either empty or an equivalence class of $\sim$.
\end{lemma}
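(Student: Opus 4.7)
The plan is to show two inclusions: first, that $I_\sim$ is a union of equivalence classes of $\sim$ (i.e.\ closed under $\sim$-equivalence), and second, that any two elements of $I_\sim$ are $\sim$-equivalent. Together, these imply that a non-empty $I_\sim$ is exactly one equivalence class.

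For the first part, suppose $e \in I_\sim$ and $e' \sim e$. To see $e' \in I_\sim$, pick any $b \in B$. Multiplicativity of the congruence (condition \textup{(C1)}$^*$ of Lemma \ref{lemma:equivalent_definition_of_a_congruence}) applied to $e' \sim e$ and $b \sim b$ gives $e'b \sim eb$. Since $e \in I_\sim$ we have $eb \sim e$, and transitivity combined with $e \sim e'$ yields $e'b \sim e'$. Thus $e' \in I_\sim$, so $I_\sim$ is a union of $\sim$-classes.

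For the second part, let $e, e' \in I_\sim$. Applying the defining property of $I_\sim$ to $e$ with $b = e'$ gives $ee' \sim e$, and applying it to $e'$ with $b = e$ gives $e'e \sim e'$. Since $A$ is commutative, $ee' = e'e$, so transitivity of $\sim$ gives $e \sim e'$. Hence any two elements of a non-empty $I_\sim$ lie in a common $\sim$-class.

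Combining the two parts: if $I_\sim$ is non-empty, fix $e \in I_\sim$; by the second part $I_\sim \subseteq [e]_\sim$, and by the first part $[e]_\sim \subseteq I_\sim$, so $I_\sim = [e]_\sim$ is exactly one equivalence class. I do not anticipate any real obstacle here — the argument uses only the multiplicativity of $\sim$ (automatic for a congruence) and the commutativity of $A$, which are both built into the setup.
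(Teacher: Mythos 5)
Your argument is correct and coincides with the paper's own proof: closure of $I_\sim$ under $\sim$ via multiplicativity and transitivity ($e'b\sim eb\sim e\sim e'$), and equivalence of any two elements via $e\sim ee'=e'e\sim e'$. Nothing to add.
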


\begin{proof}
 If $e,e'\in I_\sim$, then $e\sim ee'\sim e'$ and, conversely, if $e\in I_\sim$ and $e\sim e'$, then $e'b\sim eb\sim e\sim e'$ for all $b\in B$, which means that $e'\in I_\sim$. Thus $I_\sim$ equals an equivalence class of $\sim$ if it is not empty.
\end{proof}

An \emph{absorbing element of $B$} is an element $e\in B$ with the property that $eb\=e$ for all $b\in B$. If $e$ and $e'$ are absorbing elements, then $e'\=ee'\=e$. If $e\=\zero$ is a zero of $B$, then $eb\=\zero\=e$, which implies that $e$ is an absorbing element. 

\begin{lemma}\label{lemma:absorbing_elements_in_the_absorbing_ideal}
 If $B$ has an absorbing element $e$, then $I_\sim=p^{-1}(p(e))$ where $p:B\to B/\sim$ is the canonical projection.
\end{lemma}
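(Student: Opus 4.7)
The plan is to show both inclusions by first establishing that $e$ itself lies in $I_\sim$, then appealing to Lemma \ref{lemma:absorbing_ideal_is_an_equivalence_class} to identify $I_\sim$ with the $\sim$-equivalence class of $e$, and finally checking that this class coincides with the fiber $p^{-1}(p(e))$.

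First I would verify $e \in I_\sim$. For any $b \in B$, absorbency gives $eb \= e$, and by Lemma \ref{lemma:easy_properties_of_congruences}(i) (which says $a \= b$ implies $a \sim b$), this yields $eb \sim e$. Hence $e$ satisfies the defining condition of $I_\sim$, so $I_\sim \neq \emptyset$. By Lemma \ref{lemma:absorbing_ideal_is_an_equivalence_class}, $I_\sim$ must then be the full $\sim$-equivalence class of $e$, namely $\{a \in B \mid a \sim e\}$.

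It remains to identify this equivalence class with $p^{-1}(p(e))$. By definition $a \in p^{-1}(p(e))$ iff $[a] = [e]$ in $B/\sim$. Since the morphism $p$ restricted to $A$ is simply the quotient map by $\sim$, we have $[a] = [e]$ iff $a \sim e$. (More pedantically, one could invoke Proposition \ref{prop:congruences_are_kernels}: $[a] = [e]$ means $p(a) \= p(e)$ by properness of $B/\sim$, which by the fact that $\sim$ is the kernel of $p$ gives $a \sim e$.) Combining this with the previous paragraph yields $I_\sim = \{a \in B \mid a \sim e\} = p^{-1}(p(e))$.

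I do not anticipate a significant obstacle here: the key content has already been packaged into Lemmas \ref{lemma:absorbing_ideal_is_an_equivalence_class} and \ref{lemma:easy_properties_of_congruences} together with Proposition \ref{prop:congruences_are_kernels}. The only subtle point worth being careful about is that the identification $p^{-1}(p(e)) = \{a \mid a \sim e\}$ relies on properness of $B/\sim$ (so that equality on underlying monoids corresponds to the relation $\sim$ on $B$), which is exactly the content of Proposition \ref{prop:congruences_are_kernels}.
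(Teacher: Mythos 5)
Your proof is correct and follows essentially the same route as the paper: show $e\in I_\sim$ via absorbency together with Lemma \ref{lemma:easy_properties_of_congruences}, then use Lemma \ref{lemma:absorbing_ideal_is_an_equivalence_class} to conclude $I_\sim$ is the class of $e$, i.e.\ the fiber $p^{-1}(p(e))$. Your extra appeal to Proposition \ref{prop:congruences_are_kernels} for identifying the class with the fiber is harmless but not needed, since $p$ is by construction the quotient map on the underlying monoid, so its fibers are literally the equivalence classes of $\sim$.
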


\begin{proof}
 By Lemma \ref{lemma:absorbing_ideal_is_an_equivalence_class}, $I_\sim$ equals the inverse image of precisely one element $[a]$ in $B/\sim$ if $I_\sim$ is not empty. Since $eb\=e$ for all $b\in B$ by the definition of an absorbing element and since $eb\=e$ implies $eb\sim e$ by Lemma \ref{lemma:easy_properties_of_congruences} \textup{(C1)}, it follows that $e\in I_\sim$. Thus $[e]=p(e)$ is the class whose inverse image is $I_\sim$.
\end{proof}

Let $I\subset B$ be a subset. Let $\sim^I$ be the equivalence relation on $A$ that is defined by $a\sim^I b$ if and only if $a=b$ or $a,b\in I$, and $\sim_I$ be the restriction of $\sim^I_\cR$ to $A$ (cf.\ Definition \ref{def:congruence}). Recall from the proof of Lemma \ref{lemma:equivalent_definition_of_a_congruence} that $a\sim_I b$ if and only if there exists a sequence
$$ a \ \= \ \sum c_{1,k} \ \sim^I_\N \ \sum d_{1,k} \ \= \ \sum c_{2,k} \ \sim^I_\N \quad \dotsb \quad \sim^I_\N \ \sum d_{n,k} \ \= \ b $$
where $\sim^I_\N$ is the linear extension of $\sim^I$ to $\N[A]$, i.e.\ $\sum c_k\sim^I_\N\sum d_k$ if $c_{k}\sim^I d_{k}$ for all $k$.

\begin{df}
 An \emph{ideal of $B$} is a subset $I$ of $B$ that satisfies the following properties.
 \begin{enumerate}
  \item[\textup{(I1)}] If $a\in I$ and $b\in B$, then $ab\in I$.
  \item[\textup{(I2)}] If $e$ is a zero of $B$, then $e\in I$.
  \item[\textup{(I3)}] If $a\sim_I b$ and $b\in I$, then $a\in I$.
 \end{enumerate}
 If $I$ is an ideal, then $\sim_I$ is called the \emph{congruence generated by $I$}.
\end{df}

Note that if $B$ does not have a zero, then the empty set is an ideal of $B$. If $B$, however, has an absorbing element, the empty set cannot be an absorbing ideal. This digression is unavoidable if we want inverse images of ideals to be ideals (cf.\ Proposition \ref{prop_inverse_images}). The following two propositions show that apart from this case, ideals and absorbing ideals coincide. In particular, in $\bp_0$, there is a complete analogy between ideals and absorbing ideals. Note that if $I$ is non-empty, then $I$ contains the absorbing elements of $B$. 

\begin{prop}\label{prop:congruence_of_an_ideal}
 Let $I$ be an ideal that contains all absorbing elements of $B$. Then $\sim_I$ is the smallest congruence whose absorbing ideal is $I$.
\end{prop}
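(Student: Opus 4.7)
The plan is to break the proof into three parts: first verify that $\sim_I$ is a congruence, then compute its absorbing ideal, and finally establish minimality among congruences with absorbing ideal $I$. A unifying viewpoint is to work with the equivalence relation $\sim^I_\cR$ on $\N[A]$ (generated by $\cR$ together with $\sim^I_\N$), show that it is itself a pre-addition in the sense of (A1)--(A5), and then exploit that $\sim_I$ is simply its restriction to $A$.

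For the first part, I would check that $\sim^I_\cR$ satisfies additivity (A4) and multiplicativity (A5). Additivity is inherited because both $\cR$ and $\sim^I_\N$ are additive, and a zig-zag of generators can be summed termwise into another zig-zag. Multiplicativity is the only step where the ideal axioms really enter: for direct generators $\sum c_k \sim^I_\N \sum d_k$ and $\sum e_l \sim^I_\N \sum f_l$ (pointwise $\sim^I$), one must verify $c_k e_l \sim^I d_k f_l$, which rests precisely on axiom (I1), since a product involving an element of $I$ again lies in $I$. Once $\sim^I_\cR$ is known to be a pre-addition, Lemma~\ref{lemma:equivalent_definition_of_a_congruence} makes both conditions on $\sim_I$ transparent: (C1)$^*$ is multiplicativity of $\sim^I_\cR$ on singleton sums, and for (C2)$^*$, given a sequence $a \= \sum c_{1,k} \sim_{I,\N} \cdots \sim_{I,\N} \sum d_{n,k} \= b$, the pointwise relations $c_{i,k} \sim^I_\cR d_{i,k}$ assemble via additivity of $\sim^I_\cR$ into $\sum c_{i,k} \sim^I_\cR \sum d_{i,k}$ at every stage, so transitivity yields $a \sim^I_\cR b$ and hence $a \sim_I b$.

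Next, I would show $I_{\sim_I} = I$. The inclusion $I \subseteq I_{\sim_I}$ is direct: if $e \in I$ and $b \in B$, then $eb \in I$ by (I1), so $eb \sim^I e$ and a fortiori $eb \sim_I e$. For the reverse inclusion, I would split on whether $I$ is empty. If $I \neq \emptyset$, pick any $e_0 \in I$; then $e_0 \in I_{\sim_I}$, so Lemma~\ref{lemma:absorbing_ideal_is_an_equivalence_class} identifies $I_{\sim_I}$ with the $\sim_I$-class of $e_0$, and axiom (I3) forces this entire class into $I$. If $I = \emptyset$, the hypothesis that $I$ already contains every absorbing element means $B$ has none; but then $\sim^I$ is just equality on $A$, so $\sim^I_\cR = \cR$ and $\sim_I = \cR|_A$, which makes $I_{\sim_I}$ reduce to the (empty) set of absorbing elements of $B$.

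Finally, for minimality, let $\sim'$ be any congruence with $I_{\sim'} = I$, and suppose $a \sim_I b$, witnessed by a sequence $a \= \sum c_{1,k} \sim^I_\N \sum d_{1,k} \= \cdots \sim^I_\N \sum d_{n,k} \= b$. Each coordinate relation $c_{i,k} \sim^I d_{i,k}$ is either an equality (trivially a $\sim'$-relation) or pairs two elements of $I = I_{\sim'}$, which lie in a common $\sim'$-class by Lemma~\ref{lemma:absorbing_ideal_is_an_equivalence_class}; either way $c_{i,k} \sim' d_{i,k}$. So the very same zig-zag is also a witness against $\sim'$, and Proposition~\ref{prop:congruences_are_kernels} delivers $a \sim' b$, proving $\sim_I \subseteq \sim'$. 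The main obstacle throughout is the multiplicativity check in the first step: it is exactly where axiom (I1) is used, and without it $\sim^I_\cR$ would fail to be a pre-addition and the construction of $\sim_I$ as a congruence would collapse.
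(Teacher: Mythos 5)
Your proof is correct and follows essentially the same route as the paper: axiom (I1) gives multiplicativity of $\sim^I$, hence $\sim^I_\N$ (and with it $\sim^I_\cR$) is a pre-addition, which yields \textup{(C1)}$^*$ and \textup{(C2)}$^*$ for $\sim_I$ by the termwise zig-zag argument, and the identification $I_{\sim_I}=I$ uses (I1), (I3) and the hypothesis on absorbing elements for the empty case exactly as in the text. The only differences are organizational: you package the congruence check through the pre-addition $\sim^I_\cR$ and you write out the minimality step (via Lemma \ref{lemma:absorbing_ideal_is_an_equivalence_class} and property \textup{(C2)}$^*$ of the competing congruence), which the paper treats as immediate from the definition of $\sim_I$.
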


\begin{proof}
 Once we have proven that $\sim_I$ is a congruence with vanishing ideal $I$, it is clear that it is the smallest congruence with this property by its definition.

 We prove that $\sim_I$ is a congruence. First note that $\sim^I$ is multiplicative: given $a\sim^I b$ and $c\sim^I d$, then either $a=b$ and $c=d$ or $ac,bd\in I$. In both cases it follows that $ac\sim^I bd$. By Lemma \ref{lemma:equivalent_definition_of_a_congruence} \textup{(C1)}$^*$, this implies that $\sim^I_\N$ is a pre-addition. Thus we can use the same argument as in the proof of Proposition \ref{prop:congruences_are_kernels} to show that $\sim_I$ satisfies property \textup{(C1)}$^*$.

 We proceed to show that $\sim_I$ satisfies property \textup{(C2)}$^*$. Consider a sequence
 $$ a \ \= \ \sum c_{1,k} \ \sim_{I,\N} \ \sum d_{1,k} \ \= \ \sum c_{2,k} \ \sim_{I,\N} \quad \dotsb \quad \sim_{I,\N} \ \sum d_{n,k} \ \= \ b. $$
 Unraveling the definition of $\sim_{I,\N}$, we see that $\sum c_k\sim_{I,\N}\sum d_k$ if and only if for every $k$, there is a sequence
 $$ c_k \ \= \sum e_{1,l} \ \sim^I_\N \quad \dotsb \quad \sim^I_\N \ \sum f_{m,l} \ = \ d_k. $$
 Adding up these sequences (both $\=$ and $\sim^I_\N$ are additive) and inserting them into the sequence for $a$ and $b$ yields a sequence of the form
 $$ a \ \= \ \sum \tilde c_{1,\tilde k} \ \sim^I_{\N} \ \sum \tilde d_{1,\tilde k} \ \= \ \sum \tilde c_{2,\tilde k} \ \sim^I_{\N} \quad \dotsb \quad \sim^I_{\N} \ \sum \tilde d_{\tilde n,\tilde k} \ \= \ b. $$
 Thus $a\sim_I b$, which completes the proof that $\sim_I$ is a congruence.

 Let $I_{\sim_I}$ be the absorbing ideal of $\sim_I$. We show that $I_{\sim_I}=I$. Let $a\in I$, then $ab\in I$ for any $b\in B$, or in other words, $ab\sim^I a$. This implies $ab\sim_I a$. Thus $a\in I_{\sim_I}$, and we have shown that $I\subset I_{\sim_I}$. Let, conversely, $a\in I_{\sim_I}$, i.e.\ $ab\sim_I a$ for all $b\in B$. Assume that $I$ contains an element $e$. Then $a\sim_I ae$, where $ae\in I$ by axiom \textup{(I1)}. By axiom \textup{(I3)}, also $a\in I$.

 We have to exclude the case that $I$ is empty while $I_{\sim_I}$ is not. If $I$ is empty, then both $\sim^I$ and $\sim_I$ equal $\=$. Therefore $I_{\sim_I}$ equals the set of the absorbing elements of $B$. By the hypothesis of the proposition, $I$ contains all absorbing elements, which implies that $I_{\sim_I}$ is empty. This completes the proof of the proposition.
\end{proof}

\begin{prop}\label{prop:vanishing_ideals_are_ideals}
 The absorbing ideal of a congruence is an ideal.
\end{prop}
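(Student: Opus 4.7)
The plan is to verify the three axioms (I1), (I2), (I3) of an ideal directly for the absorbing ideal $I_\sim$ of a congruence $\sim$ on $B=\bpquot{A}{\cR}$. The main tool is Lemma \ref{lemma:absorbing_ideal_is_an_equivalence_class}, which says that $I_\sim$ is either empty or a single equivalence class of $\sim$, together with Lemma \ref{lemma:easy_properties_of_congruences}\,(i), which says $\=\,\subset\,\sim$.

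For (I1), I would argue as follows: if $a \in I_\sim$ and $b \in B$, then for every $c \in B$ we have $(ab)c = a(bc) \sim a \sim ab$ (using $a \in I_\sim$ twice, once with the element $bc$ and once with $b$), hence $ab \in I_\sim$. For (I2), if $e$ is a zero, then Lemma \ref{lemma_bp-fact}\,(iv) gives $eb \= e$ for every $b \in B$, and Lemma \ref{lemma:easy_properties_of_congruences}\,(i) upgrades this to $eb \sim e$, so $e \in I_\sim$.

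The interesting axiom is (I3). Here I would first observe that the relation $\sim^{I_\sim}$ (equal to equality, or identifying any two elements of $I_\sim$) is contained in $\sim$: if $I_\sim$ is empty this is trivial, and otherwise $I_\sim$ is a single $\sim$-class by Lemma \ref{lemma:absorbing_ideal_is_an_equivalence_class}, so any two elements of $I_\sim$ are $\sim$-equivalent. Taking the linear extension, $\sim^{I_\sim}_\N$ is contained in $\sim_\N$. Now suppose $a \sim_{I_\sim} b$, so by the description recalled before the definition of an ideal, there is a sequence
\[
a \ \= \ \sum c_{1,k} \ \sim^{I_\sim}_\N \ \sum d_{1,k} \ \= \ \sum c_{2,k} \ \sim^{I_\sim}_\N \ \cdots \ \sim^{I_\sim}_\N \ \sum d_{n,k} \ \= \ b.
\]
Using $\=\,\subset\,\sim$ and $\sim^{I_\sim}_\N\,\subset\,\sim_\N$, this is a sequence witnessing $a \sim_\cR b$, and axiom (C2) of a congruence yields $a \sim b$. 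If furthermore $b \in I_\sim$, then $a$ lies in the same $\sim$-class as $b$, so $a \in I_\sim$ by Lemma \ref{lemma:absorbing_ideal_is_an_equivalence_class}.

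I do not anticipate serious obstacles; the only subtlety is keeping the empty/non-empty alternative for $I_\sim$ in mind when invoking Lemma \ref{lemma:absorbing_ideal_is_an_equivalence_class} (so that the statement ``$I_\sim$ is an equivalence class'' is used only when $I_\sim \neq \emptyset$, which is precisely the case needed in (I3)), and correctly reading off the definition of $\sim_I$ in terms of finite zig-zag sequences so that the inclusions $\sim^{I_\sim}\subset\sim$ and $\=\,\subset\,\sim$ can be transported through the zig-zag.
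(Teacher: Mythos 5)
Your proof is correct. The verifications of (I1) and (I2) coincide with the paper's (the paper checks (I1) via multiplicativity, $(ab)c\sim(ac)b\sim ab$, while you apply the absorbing property of $a$ twice; this is the same computation). The only genuine divergence is in (I3). The paper, after replacing the $\sim^I_\N$-steps of the zig-zag by $\sim_\N$-steps (which it justifies by appealing to the minimality statement of Proposition \ref{prop:congruence_of_an_ideal}), multiplies the entire sequence by an arbitrary $e\in B$ and uses \textup{(C2)}$^*$ to obtain $ae\sim be\sim b\sim a$, thereby verifying the defining condition of $I_\sim$ by hand. You instead justify the inclusion $\sim^{I_\sim}\subset\;\sim$ directly from Lemma \ref{lemma:absorbing_ideal_is_an_equivalence_class} (valid since $b\in I_\sim$ forces $I_\sim\neq\emptyset$), apply \textup{(C2)}$^*$ once to get $a\sim b$, and finish by the saturation of $I_\sim$ under $\sim$, which is the second half of Lemma \ref{lemma:absorbing_ideal_is_an_equivalence_class}. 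Both routes prove the same thing; yours is a bit more economical because the ``multiply by an arbitrary element'' computation is already packaged inside that lemma (its proof is exactly $e'b\sim eb\sim e\sim e'$), and it avoids invoking Proposition \ref{prop:congruence_of_an_ideal}, whose hypothesis that $I$ is an ideal is precisely what is being established here---the paper's citation works only because the minimality argument does not actually use that hypothesis, a subtlety your version sidesteps entirely.
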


\begin{proof}
 Let $\sim$ be a congruence on $B$ and $I_\sim$ its absorbing ideal. We verify the axioms of an ideal. Let $a\in I_\sim$ and $b\in B$, then we have for all $c\in B$ that $(ab)c\sim (ac)b\sim ab$ and thus $ab\in I_\sim$. This shows axiom \textup{(I1)} of an ideal.

 Let $a$ be a zero of $B$. Then $ab\=a$ for all $b\in B$. This implies $ab\sim b$ for all $b\in B$ and thus $a\in I_\sim$. This shows axiom \textup{(I2)} of an ideal.

 Let $\sim_I$ be the congruence generated by $I=I_\sim$. Let $a\sim_I b$ and $b\in I_\sim$. We have to show that $a\in I_\sim$. Since $a\sim_I b$, there is a sequence
 $$ a \ \= \ \sum c_{1,k} \ \sim_{I,\N} \quad \dotsb \quad \sim_{I,\N} \ \sum d_{n,k} \ \= \ b, $$
 which implies
 $$ a \ \= \ \sum c_{1,k} \ \sim_{\N} \quad \dotsb \quad \sim_{\N} \ \sum d_{n,k} \ \= \ b $$
 since $\sim_I$ is the smallest congruence with absorbing ideal $I_\sim$ by Proposition \ref{prop:congruence_of_an_ideal}. Let $e\in B$, then the multiplicativity of $\=$ and $\sim$ yields
 $$ ae \ \= \ \sum c_{1,k}e \ \sim_{\N} \quad \dotsb \quad \sim_{\N} \ \sum d_{n,k}e \ \= \ be, $$
 which means that $ae\sim be$ by property \textup{(C2)}$^*$ of a congruence. Thus we have that $ae\sim be\sim b\sim a$ for every $e\in B$ and consequently $a\in I_\sim$. This shows axiom \textup{(I3)} of an ideal.
\end{proof}

Define the quotient $B/I$ as $B/\sim_I$ for an ideal $I$ of $B$. The following statements follow immediately from the above propositions.

\begin{cor}
 Let $I$ be an ideal that contains all absorbing elements of $B$. Then $I$ equals $p^{-1}(e)$ if $p:B\to B/I$ is the quotient map and $e$ is an absorbing element of $B/I$. If $B/I$ does not have an absorbing element, $I$ is empty and $p$ is an isomorphism. More generally, if $f:B\to C$ is a morphism of blueprints and $C$ has an absorbing element $e$, then $I_f=f^{-1}(e)$. \qed
\end{cor}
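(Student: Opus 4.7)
I would deduce the corollary from Proposition \ref{prop:congruence_of_an_ideal} and Lemma \ref{lemma:absorbing_ideal_is_an_equivalence_class}, splitting the first assertion into the cases $I\neq\emptyset$ and $I=\emptyset$ and then passing to the morphism version.

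If $I$ is non-empty, Proposition \ref{prop:congruence_of_an_ideal} tells me that $\sim_I$ is a congruence whose absorbing ideal is exactly $I$, and Lemma \ref{lemma:absorbing_ideal_is_an_equivalence_class} pins $I$ down as a single equivalence class of $\sim_I$. For any fixed $a\in I$, axiom (I1) gives $ab\in I$ and therefore $p(a)\cdot p(b)=p(ab)=p(a)$ in $B/I$ for every $b\in B$. Since $p$ is surjective this forces $p(a)$ to be absorbing in $B/I$, and the single-class property yields $p^{-1}(p(a))=\{b\in B\mid b\sim_I a\}=I$, as required. If instead $I=\emptyset$, the hypothesis that $I$ contains every absorbing element of $B$ forces $B$ itself to have none; consequently $\sim^I$ is equality on $A$, its linear extension $\sim^I_\N$ is equality on $\N[A]$, the saturation $\sim^I_\cR$ collapses to $\=$, and $\sim_I$ becomes the restriction of $\=$ to $A$. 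Thus $B/I=B_\proper$ (compare Lemma \ref{lemma_proper}), which under the standing convention that $B$ is proper is just $B$ itself, so $p$ is the identity. Conversely, the computation of the non-empty case shows that any absorbing element of $B/I$ lifts along the surjection $p$ to an element of $I$, so $I=\emptyset$ whenever $B/I$ has no absorbing element.

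For the general morphism $f\colon B\to C$ with absorbing element $e\in C$, the inclusion $f^{-1}(e)\subseteq I_f$ is immediate: $f(a)=e$ gives $f(ab)=e\cdot f(b)\=e=f(a)$, hence $ab\sim_f a$ for every $b\in B$, so $a\in I_f$. For the reverse inclusion, $a\in I_f$ means $f(a)\cdot f(b)\=f(a)$ for every $b\in B$; the plan is to feed in a $b\in B$ whose image lies in the $\=$-class of $e$ — in the paper's standing setting $\bp_0$ the zero $0_B$ serves, since $f(0_B)$ is a zero of $C$ and hence in the same $\=$-class as $e$ — and then combine $f(a)\cdot f(b)\=f(a)$ with the absorbency relation $e\cdot f(a)\=e$ to conclude $f(a)\=e$. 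Properness of $C$ upgrades this to the literal equality $f(a)=e$, completing the inclusion.

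The single delicate point, and the main obstacle, is exactly this production of a witnessing test element $b$: one needs the image of $f$ to meet the $\=$-class of the prescribed absorbing element $e$ of $C$. In $\bp_0$ with proper codomain this is automatic via the zero of $B$, so the statement of the corollary in the paper's working environment is clean; in full generality one would have to restrict to the image factorisation of $f$, or pass to proper quotients, before the equality $I_f=f^{-1}(e)$ becomes literal rather than an equality of equivalence classes.
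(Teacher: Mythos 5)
Your derivation follows exactly the route the paper intends: the corollary carries no written proof (it is declared an immediate consequence of Proposition \ref{prop:congruence_of_an_ideal}, Lemmas \ref{lemma:absorbing_ideal_is_an_equivalence_class} and \ref{lemma:absorbing_elements_in_the_absorbing_ideal}, and Proposition \ref{prop:congruences_are_kernels}), and your treatment of the first assertion is complete and correct --- note only that you tacitly use that $B/I$ is proper (Proposition \ref{prop:congruences_are_kernels}) both to identify the fibres of $p$ with the $\sim_I$-classes and to know that the absorbing element of $B/I$ is unique, so that the absorbing class you exhibit is the prescribed $e$.

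One correction of attribution, though: there is no standing convention in Section \ref{section:congruences_and_ideals} that $B$ is proper, nor that all blueprints lie in $\bp_0$; Section \ref{subsection:congruences} only fixes an arbitrary blueprint $B=\bpquot A\cR$. So the hypotheses you invoke (``standing convention that $B$ is proper'', ``the paper's standing setting $\bp_0$'') are not actually available --- rather, the corollary is stated loosely and genuinely needs them, exactly as your own analysis shows. In the empty case you only obtain $B/I=B_\proper$, so $p$ is an isomorphism precisely when $B$ is proper; and for the last assertion the paper's own example following Proposition \ref{prop_inverse_images} (the inclusion $\{1\}\hookrightarrow\{0,1\}$ with $e=0$) has $I_f=\{1\}$ but $f^{-1}(e)=\emptyset$, confirming that the inclusion $I_f\subset f^{-1}(e)$ requires a witness $b\in B$ with $f(b)$ in the $\=$-class of $e$ (automatic when $B$ has a zero) together with properness of $C$ to pass from $f(a)\=e$ to $f(a)=e$ --- the two points you isolate. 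So your proof is the intended one and is correctly executed wherever the statement is provable; the caveats you raise are defects of the statement rather than of your argument, and should be presented as missing hypotheses rather than as ambient conventions of the paper.
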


\begin{pg}\label{pg:generated_ideal_and_congruence}
 For a subset $J$ of $B$, we define the equivalence relation $\sim^{J}$ on $B$ by $a\sim^Jb$ if and only if either $a\=b$ or $a,b\in JB=\{c\in B|c\=md\text{ with }m\in J, d\in B\}$. Let $\sim^J_\N$ be the linear extension to $\N[A]$. The \emph{ideal generated by $J$} is the set
 \[
  \igen{J}_B \ = \ \{ \ b\in B \ | \ b\=\sum c_{1,k}\sim^{J}_\N \quad\dotsb\quad \sim^{J}_\N \sum d_{n,k}\= a\text{ with }a\in JB\text{ and }c_{i,k},d_{i,k}\in B \ \}.
 \]
 For a set $J=\{a_i\sim b_i\}_{i\in I}$, we define the equivalence relation $\sim^J$ on $B$ by $c\sim^J d$ if and only if $c\=d$ or $c=fa_i$ and $d=fb_i$ resp.\ $c=fb_i$ and $d=fa_i$ for some $i\in I$ and $f\in B$. The \emph{congruence generated by $J$} is the relation $\sim_J$ on $B$ with $a\sim_J b$ if and only if there is a sequence $a\=\sum c_{1,k}\sim^{J}_\N \quad\dotsb\quad \sim^{J}_\N \sum d_{n,k}\= b$ for some $c_{i,k},d_{i,k}\in B$.

 We leave it to the reader to verify that $\igen J_B$ is the smallest ideal of $B$ that contains $J$ and that $\sim_J$ is the smallest congruence that contains $J$.
\end{pg}

\subsection{Ideals for blueprints with a zero}
\label{section:ideals_with_zero}

Axiom \textup{(I3)} of an ideal can be reformulated in a more convenient way if the blueprint has a zero. 

\begin{lemma}\label{lemma_ideals_in_bp_with_0}
 Let $B$ be a blueprint with a zero $0$. Then a subset $I\subset B$ is an ideal if and only if it satisfies the following three properties.
 \begin{enumerate}
  \item\label{bp0ideal1} $IB\subset I$.
  \item\label{bp0ideal2} $0\in B$.
  \item\label{bp0ideal3} If $a+\sum b_j\=\sum c_k$ and $b_j,c_k\in I$, then $a\in I$.
 \end{enumerate}
\end{lemma}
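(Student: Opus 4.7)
My plan is to establish the two implications separately; in both directions, the key ingredient is the fact (Lemma \ref{lemma_bp-fact}\eqref{bp-fact4}) that the chosen zero $0$ can be inserted into or removed from any formal sum.

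For the forward direction, assume $I$ is an ideal. Property \eqref{bp0ideal1} is axiom \textup{(I1)} and property \eqref{bp0ideal2} (read as $0\in I$) is the instance of \textup{(I2)} for the zero $0$. To verify \eqref{bp0ideal3}, suppose $a+\sum b_j\=\sum c_k$ with $b_j,c_k\in I$; I shall exhibit a chain witnessing $a\sim_I 0$ and then invoke \textup{(I3)}. The chain
$$ a \ \= \ a+\sum_j 0 \ \sim^I_\N \ a+\sum_j b_j \ \= \ \sum_k c_k \ \sim^I_\N \ \sum_k 0 \ \= \ 0 $$
does the job: the three $\=$-steps use Lemma \ref{lemma_bp-fact}\eqref{bp-fact4} to absorb zeros together with the given relation, and each $\sim^I_\N$-step is valid because the paired summands either agree (the two copies of $a$) or both lie in $I$ (since $0\in I$ and $b_j,c_k\in I$). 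As $0\in I$, axiom \textup{(I3)} gives $a\in I$.

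For the converse, assume \eqref{bp0ideal1}--\eqref{bp0ideal3}. Axiom \textup{(I1)} is literally \eqref{bp0ideal1}. For \textup{(I2)}, let $e\in B$ be any zero; two applications of Lemma \ref{lemma_bp-fact}\eqref{bp-fact4} yield $e\=0+e\=0$, and then \eqref{bp0ideal3} with $\sum b_j=\zero$ (vacuously in $I$) and $\sum c_k$ the single summand $0\in I$ forces $e\in I$.

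The substantive task is \textup{(I3)}. Suppose $a\sim_I b$ with $b\in I$, so there is a chain $a\=s_1\sim^I_\N t_1\=s_2\sim^I_\N\cdots\sim^I_\N t_n\=b$. At each index $i$, the relation $s_i\sim^I_\N t_i$ partitions the paired summands into those that coincide and those lying on both sides in $I$; this lets me write $s_i=u_i+x_i$ and $t_i=u_i+y_i$ where $x_i,y_i$ are formal sums of elements of $I$. The plan is to telescope in $B$: starting from $a\=u_1+x_1$, I add $y_1$ to both sides and use $u_1+y_1=t_1\=s_2=u_2+x_2$ to obtain $a+y_1\=u_2+x_1+x_2$; iterating this step, and using $t_n=u_n+y_n\=b$ at the end, produces
$$ a+y_1+\cdots+y_n \ \= \ b+x_1+\cdots+x_n. $$
Every summand of the $x_i$ and $y_j$ lies in $I$ and $b\in I$ by hypothesis, so condition \eqref{bp0ideal3} gives $a\in I$. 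The main obstacle is this telescoping argument, which relies on axiom \textup{(A4)} of the pre-addition to carry spectator summands across the successive $\=$ and $\sim^I_\N$ steps; once it is in place, all three ideal axioms follow.
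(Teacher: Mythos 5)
Your proof is correct and takes essentially the same route as the paper's: the forward direction uses the identical chain $a\=a+\sum 0\sim^I_\N a+\sum b_j\=\sum c_k\sim^I_\N\sum 0\=0$ together with \textup{(I3)}, and the converse handles \textup{(I3)} by the same decomposition of each $\sim^I_\N$-step into a shared part plus summands lying in $I$, then accumulating the $I$-summands along the chain (the paper adds all of them at once at the start rather than telescoping, a purely cosmetic difference) so that property \eqref{bp0ideal3} applies. Your slightly more careful derivation of \textup{(I2)} for an arbitrary zero $e$ from $0\in I$ via $e\=0$ is a detail the paper passes over silently.
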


\begin{proof}
 The first two properties coincide with the first two axioms of an ideal. We show that property \eqref{bp0ideal3} implies axiom \textup{(I3)}. Consider a sequence 
 $$ a \ \= \ \sum c_{1,k} \ \sim^I_\N \ \sum d_{1,k} \ \= \ \sum c_{2,k} \ \sim^I_\N \quad \dotsb \quad \sim^I_\N \ \sum d_{n,k} \ \= \ b $$
 with $b\in I$. Then we have for every $i\in\{1,\dotsc,n\}$ that $\sum c_{i,k}= \sum f_{i,j} + \sum \tilde c_{i,l}$ and $\sum d_{i,k}= \sum f_{i,j} + \sum \tilde d_{i,l}$ for certain $f_{i,j}\in B$ and $\tilde c_{i,l},\tilde d_{i,l}\in I$ by the definition of $\sim^I_\N$. Rewriting the above sequence yields 
 \begin{multline*}
 a \quad \= \quad \sum f_{1,j} + \sum \tilde c_{1,l} \quad \sim^I_\N \quad \sum f_{1,j} + \sum \tilde d_{1,l} \quad \= \quad \sum f_{2,j} + \sum \tilde c_{2,l} \quad \sim^I_\N \quad \dotsb \\
                \dotsb \quad \sim^I_\N \quad \sum f_{n,j} + \sum \tilde d_{n,l} \quad \= \quad b.
 \end{multline*}
 This gives rise to a sequence of additive relations
 \begin{align*}
  a + \sum \tilde d_{1,l} + \dotsb + \sum \tilde d_{n,l} \quad &\= \quad \sum f_{1,j} + \sum \tilde c_{1,l} + \sum \tilde d_{1,l} + \dotsb + \sum \tilde d_{n,l} \\
                                                               &\= \quad \sum f_{2,j} + \sum \tilde c_{1,l} + \sum \tilde c_{2,l} + \sum \tilde d_{2,l} + \dotsb + \sum \tilde d_{n,l} \\
                                                               &\ \ \vdots  \\
                                                               &\= \quad \sum f_{n,j} + \sum \tilde c_{1,l} + \dotsb + \sum \tilde c_{n,l} + \sum \tilde d_{n,l} \\
                                                               &\= \quad \sum b + \sum \tilde c_{1,l} + \dotsb + \sum \tilde c_{n,l}. \\
 \end{align*}
 We can apply property \eqref{bp0ideal3} to the first and last term in this sequence to conclude that $a\in I$. This establishes axiom \textup{(I3)}. 

 To derive the converse implication, consider an additive relation $a+\sum b_j\=\sum c_k$ with $b_j,c_k\in I$. Then axiom \textup{(I3)} applied to the sequence
 $$ a \ \= \ a + \sum 0 \ \sim^I_\N \ a+\sum b_j \ \= \ \sum c_k \ \sim^I_\N \ \sum 0 \ \= \ 0 $$
 yields $a\in I$. This establishes property \eqref{bp0ideal3}.
\end{proof}

\subsection{Congruences and ideals for monoids}
\label{section:congruences_and_ideals_for_monoids}

 Let $B=\bpquot A\cR$ be a monoid, i.e.\ $\cR=\{\sum a_i\=\sum a_i| a_i\in A\}$ is the trivial pre-addition. Then $I$ is an ideal of $B$ if and only if $I$ satisfies axiom \textup{(I1)}, i.e.\ if $IB\subset I$, since axioms \textup{(I2)} and \textup{(I3)} are automatically satisfied in the case of monoids. This is the same definition as in Deitmar's approach to $\Fun$-schemes (cf.\ \cite{Deitmar05}). Since property \textup{(C2)}$^*$ of a congruence is automatically satisfied by an equivalence relation, a congruence is the same as what is usually known as a congruence for semi-groups: a multiplicative equivalence relation (cf.\ \cite{Clifford-Preston1}).

 Let $B=\bpquot A\cR$ be a monoid with a zero, i.e.\ $\cR=\gen{0\=\zero}$. Then $I$ is an ideal of $B$ if and only if $IB\subset I$ and $0\in I$ since also in this case, axiom \textup{(I3)} is automatically satisfied as can be seen easily from Lemma \ref{lemma_ideals_in_bp_with_0}. Therefore our notion of an ideal coincides with the one introduced by Connes and Consani for monoids with a zero (cf.\ \cite{CC09}). Also in the case of monoids with a zero, property $\textup{(C2)}^*$ of a congruence is automatically satisfied and the notion of a congruence reduces to that of a multiplicative equivalence relation.

\subsection{Congruences and ideals for rings}
\label{section:congruences_and_ideals_for_rings}

 If $B$ is a ring, then it is clear by Lemma \ref{lemma_ideals_in_bp_with_0} that the definition of an ideal defined in this text coincide with the usual definition of an ideal. Further, it is well-known that every quotient of a ring can be described as the quotient structure on the set of coset with respect to an ideal of $R$. Thus every congruence $\sim$ on a ring $B$ is of the form $\sim_I$ for some ideal $I$ of $B$. We have the following compatibility with the functor $(\blanc)_\Z:\bp\to\Rings$.

\begin{lemma}\label{lemma:base_extension_of_congruences_and_ideals_to_z}
  Let $B=\bpquot A\cR$ be a blueprint and $\sim$ a congruence on $B$. Let $i:B\to B_\Z$ be the canonical map. Then 
 \[ 
  I_\Z(\sim) \ = \ \{\ a\in B_\Z \ | \ a=\sum i(c_k)-\sum i(d_k)\text{ for }c_k, d_k \in B\text{ with }c_k\sim d_k \ \} 
 \] 
 is an ideal of $B_\Z$ and $(B/\sim)_\Z\simeq B_\Z/I_\Z(\sim)$. If $\sim=\sim_I$ for an ideal $I$ of $B$, then $I_\Z(\sim_I)$ equals
 \[
  I_\Z \ = \ \{\ a\in B_\Z \ | \ a=\sum i(c_k)-\sum i(d_k)\text{ with }c_k, d_k \in I \ \}.
 \]

\end{lemma}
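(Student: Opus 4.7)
The plan is to verify the three claims in turn, relying on the universal property of $(\blanc)_\Z$ and the explicit sequence-characterization of the congruence $\sim_I$.

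First I would check that $I_\Z(\sim)$ is an ideal of $B_\Z$ by a routine verification. Reflexivity of $\sim$ gives $0 = i(c) - i(c) \in I_\Z(\sim)$; symmetry gives closure under negation; closure under addition is immediate from the explicit form (concatenate the index sets). For closure under scalar multiplication by $B_\Z$, note that by multiplicativity \textup{(C1)}$^*$ of $\sim$, if $c_k \sim d_k$ then $ec_k \sim ed_k$ for every $e \in B$, and then extend $\Z$-linearly to elements of $B_\Z$, using that sums of elements of $I_\Z(\sim)$ lie in $I_\Z(\sim)$.

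Next I would establish the isomorphism $(B/\sim)_\Z \simeq B_\Z/I_\Z(\sim)$ by comparing universal properties. The composition $B \to B/\sim \to (B/\sim)_\Z$ extends uniquely by Section~\ref{section:rings} to a ring morphism $\varphi: B_\Z \to (B/\sim)_\Z$; by construction $\varphi(i(c) - i(d)) = 0$ whenever $c \sim d$, so $\varphi$ kills $I_\Z(\sim)$ and descends to $\bar\varphi: B_\Z/I_\Z(\sim) \to (B/\sim)_\Z$. Conversely, the blueprint morphism $B \to B_\Z/I_\Z(\sim)$ has kernel containing $\sim$ because $c \sim d$ forces $i(c) - i(d) \in I_\Z(\sim)$; hence it factors through $B \to B/\sim$, and by adjointness extends uniquely to a ring morphism $\psi: (B/\sim)_\Z \to B_\Z/I_\Z(\sim)$. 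Both compositions agree with the identity on the images of $B$, which generate the target rings, so $\bar\varphi$ and $\psi$ are mutually inverse.

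For the last identification, the inclusion $I_\Z \subseteq I_\Z(\sim_I)$ is immediate: if $c, d \in I$, then $c \sim^I d$ by definition of $\sim^I$, so $c \sim_I d$. The reverse inclusion is the substantive step. Take $a = \sum_k (i(c_k) - i(d_k)) \in I_\Z(\sim)$ with $c_k \sim_I d_k$. For each $k$, unfold the definition of $\sim_I$ to obtain a sequence
\[
c_k \ \= \ \sum_\ell e_{1,\ell}^{(k)} \ \sim^I_\N \ \sum_\ell f_{1,\ell}^{(k)} \ \= \ \sum_\ell e_{2,\ell}^{(k)} \ \sim^I_\N \ \dotsb \ \= \ d_k.
\]
Each $\=$ becomes an honest equality in $B_\Z$ under $i$, so the difference $i(c_k) - i(d_k)$ telescopes to $\sum_{j,\ell} \bigl(i(e_{j,\ell}^{(k)}) - i(f_{j,\ell}^{(k)})\bigr)$, where for each index $(j,\ell)$ either $e_{j,\ell}^{(k)} = f_{j,\ell}^{(k)}$ (the summand vanishes) or else $e_{j,\ell}^{(k)}, f_{j,\ell}^{(k)} \in I$. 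Discarding the trivial terms and collecting the nontrivial ones puts $i(c_k) - i(d_k)$ into the form $\sum i(c'_m) - \sum i(d'_m)$ with $c'_m, d'_m \in I$, i.e.\ into $I_\Z$; summing over $k$ yields $a \in I_\Z$.

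The main obstacle is the last paragraph: it requires carefully unpacking the definition of $\sim_I$ through the linear extension $\sim^I_\N$ and recognizing that the telescoping argument in $B_\Z$ only works because relations of the form $\=$ descend to strict equalities in the ring $B_\Z$. The first two parts are essentially formal consequences of the adjunction $(\blanc)_\Z \dashv \fB$ and the congruence axioms.
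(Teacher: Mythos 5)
Your proof is correct, and two of its three parts run along the same lines as the paper's: the ideal verification (where you are actually more careful than the paper, which only checks that $I_\Z(\sim)$ is an additive subgroup and leaves closure under multiplication by $B_\Z$ implicit), and the final identification $I_\Z(\sim_I)=I_\Z$, where your telescoping argument through the sequences defining $\sim_I$ is exactly the detailed justification of the paper's one-line claim that $c\sim_I d$ forces $i(c)-i(d)\in I_\Z$. The genuine difference is the middle step: for $(B/\sim)_\Z\simeq B_\Z/I_\Z(\sim)$ the paper simply observes that both rings are quotients of the same semi-group ring $\Z[A]$ by the ideal generated by the elements $\sum a_i-\sum b_j$ with $\sum a_i\=\sum b_j$ and $a-b$ with $a\sim b$, so the isomorphism is immediate from matching presentations; you instead build mutually inverse ring morphisms from the universal properties of $B\to B/\sim$ and of $(\blanc)_\Z$ and check they are inverse on generators. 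The presentation argument is shorter and makes the identification of $I_\Z(\sim)$ with the image of that ideal transparent; your adjunction argument is more robust in that it never appeals to the explicit construction $B_\Z=\Z[A]/\cI(\cR)$, at the mild cost of invoking a factorization property of quotients by congruences (a morphism whose kernel contains $\sim$ factors through $B/\sim$) that the paper does not state as a lemma but which follows at once from Proposition \ref{prop:congruences_are_kernels} and the definition of $B/\sim$.
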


\begin{proof}
 We verify that $I_\Z(\sim)$ is an ideal. It contains the zero $0$ of $B_\Z$ since $0=i(1)-i(1)$. If $a,a'\in I_\Z(\sim)$, i.e.\ $a=\sum i(c_k)-\sum i(d_k)$ and $a'=\sum i(c_k')-\sum i(d_k')$ with $c_k\sim d_k$ and $c_k'\sim d_k'$, then $a-a'=\sum i(c_k)+\sum i(d_k')-\sum i(d_k)-\sum i(c_k')$ is an element of $I_\Z(\sim)$.

 The equality $(B/\sim)_\Z\simeq B_\Z/I_\Z(\sim)$ follows from the observation that both rings in question are quotients of $\Z[A]$ by the ideal generated by the elements of the form $\sum a_i-\sum b_j$ where $\sum a_i\= \sum b_j$ and $a-b$ where $a\sim b$.

 If $\sim=\sim_I$ for an ideal $I$ of $B$, then $c_k\sim d_k$ if and only if $i(c_k)$ and $i(d_k)$ represent the same coset of $I_\Z$, i.e.\ $i(c_k)+I_\Z=i(d_k)+I_\Z$. Therefore $I_\Z(\sim_I)$ is exhausted by differences $\sum i(c_k)-\sum i(d_k)$ with $c_k,d_k\in I$.
\end{proof}

\subsection{Congruences and ideals for semirings}
\label{section:congruences_and_ideals_for_semirings}
 
Let $B$ be a semiring and $0$ its zero. Since $B$ is proper, we write $=$ in place of $\=$, and since $B$ is a semiring, we identify $a+b$ with $c$ if $c$ is the unique element such that $a+b=c$. The conditions stated in Lemma \ref{lemma_ideals_in_bp_with_0} coincide with the axioms of what is called a $k$-ideal in semiring theory---a semiring ideal in the usual sense is a subset $I$ of $B$ such that $IB\subset I$, $0\in B$ and such that $a+b\in I$ whenever $a,b\in I$ (cf.\ \cite{LaTorre65}). For the purpose of scheme theory, however, the notion of a $k$-ideal is the appropriate one. 

We can reformulate the axioms of a congruence in an easier shape.

\begin{lemma}
 Let $B$ be a semiring. Then a relation $\sim$ on $B$ is a congruence if and only if for all $a,b,c,d\in B$ with $a\sim b$ and $c\sim d$, we have $ac\sim bd$ and $a+c\sim b+d$.
\end{lemma}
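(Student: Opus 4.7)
My plan is to prove the two implications separately. \emph{Forward direction.} Suppose $\sim$ is a congruence. Multiplicativity is axiom \textup{(C1)}$^*$ verbatim. For additivity, axiom \textup{(C1)} makes $\sim_\N$ a pre-addition on $\N[B]$, so $a \sim b$ and $c \sim d$ yield $a+c \sim_\N b+d$. Because $B$ is a proper semiring, the formal sums on either side evaluate to genuine elements $a+c,\, b+d \in B$, and the chain
\[
(a+c) \ \= \ a+c \ \sim_\N \ b+d \ \= \ (b+d)
\]
falls under axiom \textup{(C2)}$^*$, so $a+c \sim b+d$.

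\emph{Reverse direction.} Now assume multiplicativity and additivity of $\sim$. Condition \textup{(C1)}$^*$ is immediate, so I need only verify \textup{(C2)}$^*$. Given a sequence
\[
a \ \= \ \sum c_{1,k} \ \sim_\N \ \sum d_{1,k} \ \= \ \sum c_{2,k} \ \sim_\N \ \dotsb \ \sim_\N \ \sum d_{n,k} \ \= \ b,
\]
I first refine each $\sim_\N$ step into elementary ones, i.e.\ relations of the form $\sum c_{i,k} \sim_\N \sum d_{i,k}$ with $c_{i,k} \sim d_{i,k}$ for every $k$; this is possible because $\sim_\N$ is by definition generated by such elementary relations. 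I then collapse each formal sum to its value in $B$, writing $s_i = \sum c_{i,k}$ and $t_i = \sum d_{i,k}$, which are well-defined elements since $B$ is a semiring. Because $B$ is proper, the $\=$-relations in the chain become genuine equalities $a = s_1$, $t_i = s_{i+1}$, and $t_n = b$. Iterating the additivity hypothesis on the number of summands turns $c_{i,k} \sim d_{i,k}$ (all $k$) into $s_i \sim t_i$, and transitivity of $\sim$ strings these into the desired $a \sim b$.

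\emph{Main obstacle.} The only delicate point is the passage from formal sums in $\N[B]$ to honest elements of $B$; this is precisely where the semiring and properness hypotheses do the real work. Once this reduction is made, the two hypothesized closure properties, together with the fact that $\sim$ is an equivalence relation, suffice to verify \textup{(C1)}$^*$ and \textup{(C2)}$^*$ without further complication.
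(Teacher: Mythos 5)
Your proof is correct and follows essentially the same route as the paper: the forward direction applies \textup{(C2)}$^*$ to the short chain $a+c\=a+c\sim_\N b+d\= b+d$, and the converse breaks a general chain at the points where the semiring structure lets formal sums be evaluated in $B$, then handles a single $\sim_\N$ step by iterating additivity over the summands. The only cosmetic difference is your extra refinement of each $\sim_\N$ step into elementary ones, which is harmless (and in fact unnecessary, since for an equivalence relation $\sim$ the elementary relations already form an equivalence relation, so $\sim_\N$ consists exactly of them), whereas the paper simply splits the chain at the $\=$ steps.
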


\begin{proof}
 The lemma is proven if we have shown that the property that $a\sim b$ and $c\sim d$ implies $a+c\sim b+d$ is equivalent to property \textup{(C2)}$^*$ of a congruence. Assume $\sim$ is a congruence and let $e=a+c$ and $f=b+d$. Then the sequence $e=a+c\sim_\N b+d=f$ shows that $e\sim f$. 

 Conversely, assume $\sim$ satisfies the properties of the lemma and consider a sequence
 $$ a \ = \ \sum c_{1,k} \ \sim_\N \ \sum d_{1,k} \ = \ \sum c_{2,k} \ \sim_\N \quad \dotsb \quad \sim_\N \ \sum d_{n,k} \ = \ b. $$
 Since for $i=1,\dotsc,n-1$, there is a $b_i\in B$ such that $\sum d_{1,k} = b_1 = \sum c_{2,k}$, we can break the above sequence into pieces of the form 
 \[
  a \ = \ \sum c_{1,k} \ \sim_\N \ \sum d_{1,k} \ = \ b_1,  \qquad  b_1 \ = \ \sum c_{2,k} \ \sim_\N \ \sum d_{2,k} \ = \ b_2 \quad\dotsc \ ,
 \]
 which reduces the proof to show that $a = \sum c_{k} \sim_\N \sum d_{k}= b$ implies $a\sim b$. But this follows easily by an induction on the number of summands in $\sum c_k$ from the properties of the lemma.
\end{proof}


\subsection{Prime congruences and prime ideals}
\label{section:prime_congruences_and_prime_ideals}

In this section, we introduce prime ideals and prime congruences. Let $B=\bpquot A\cR$ be a blueprint throughout this section. 

An ideal $I$ of $B$ is \emph{proper} if $I\neq B$. A congruence $\sim$ on $B$ is proper if it has more than one equivalence classes.

\begin{lemma}\label{lemma:proper_congruence_and_proper_ideal}
 A congruence $\sim$ is proper if and only if its absorbing ideal $I_\sim$ is proper.
\end{lemma}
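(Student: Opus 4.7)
The plan is to prove both implications by contraposition, exploiting the very simple fact that membership in the absorbing ideal is controlled by the behaviour of $1$.

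First I would handle the direction ``$I_\sim$ proper $\Rightarrow$ $\sim$ proper'' by contrapositive. Suppose $\sim$ is not proper, so every two elements of $B$ are equivalent under $\sim$. Then for any $e \in B$ and any $b \in B$, the elements $eb$ and $e$ automatically satisfy $eb \sim e$, which means $e \in I_\sim$. Hence $I_\sim = B$, contradicting properness of $I_\sim$.

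Next I would handle the reverse direction by contrapositive as well. Suppose $I_\sim$ is not proper, i.e.\ $I_\sim = B$. Then in particular $1 \in I_\sim$, so $1 \cdot b \sim 1$ for every $b \in B$, i.e.\ $b \sim 1$ for every $b$. By transitivity, this says all elements of $B$ lie in a single equivalence class, so $\sim$ is not proper.

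The argument is essentially routine once one unpacks the definition of $I_\sim$; there is no real obstacle here. One minor subtlety worth noting is the case when $I_\sim$ is empty, which by Lemma \ref{lemma:absorbing_ideal_is_an_equivalence_class} is permitted: in that case $I_\sim \neq B$ (so $I_\sim$ is proper), and the first direction of the argument shows that $\sim$ must indeed be proper, consistent with the claim. So no separate treatment is needed.
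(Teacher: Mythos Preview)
Your proof is correct and follows essentially the same elementary idea as the paper's: both directions reduce to noting that if all elements are $\sim$-equivalent then every element lies in $I_\sim$, and conversely. The only minor difference is that the paper handles the direction ``$\sim$ proper $\Rightarrow$ $I_\sim$ proper'' by invoking Lemma~\ref{lemma:absorbing_ideal_is_an_equivalence_class} (that $I_\sim$ is an equivalence class when nonempty), whereas you argue directly via $1\in I_\sim$; your version is slightly more self-contained but not substantively different.
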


\begin{proof}
 Since $I_\sim$ equals an equivalence class of $\sim$ if it is not empty, everything is clear once we showed that $I_\sim$ is not empty if $\sim$ is proper. Assume $a\sim b$ for all $a,b\in B$. Then also $ab\sim a$ for all $a,b\in B$ and thus $a\in I_\sim$.
\end{proof}

\begin{df}\label{def_prime_ideal_and_prime_congruence} \ 
 \begin{enumerate}
  \item A \emph{prime ideal} is an ideal $\fp$ of $B$ such that the complement $B-\fp$ is a multiplicative subset. 
  \item A \emph{prime congruence} is a proper congruence $\sim$ on $B$ such that the quotient $B/\sim$ is an integral blueprint.
 \end{enumerate}
\end{df}

The following reformulations of the definition of prime ideals and prime congruences are immediate.

\begin{lemma}\label{lemma_prime_ideal_and_prime_congruence}\ 
 \begin{enumerate}
  \item An ideal $I$ is prime if and only if for all $a,b\in B$ such that $ab\in I$, either $a\in I$ or $b\in I$. 
  \item A proper congruence $\sim$ is prime if and only if for all $a,b,c\in B$ such that $ab\sim ac$, either $b\sim c$ or $a\sim e$ where $e\=\zero$ is a zero.\qed
 \end{enumerate}
\end{lemma}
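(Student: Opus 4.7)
The plan is to translate both characterizations through the canonical projection $p\colon B\to B/\!\sim$, using Proposition \ref{prop:congruences_are_kernels} to describe the quotient, together with Definition \ref{def_prime_ideal_and_prime_congruence}.

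Part (i) is essentially a direct unraveling of the definition. By definition $\fp$ is prime iff $B\setminus\fp$ is a multiplicative subset, i.e.\ a submonoid of $A$: it contains $1$ and is closed under multiplication. The condition $1\in B\setminus\fp$ is tantamount to $\fp$ being proper, since $1\in\fp$ together with axiom \textup{(I1)} would force $\fp\supset \fp B=B$. Closure of $B\setminus\fp$ under multiplication reads, by contraposition, as the stated implication $ab\in\fp\Rightarrow a\in\fp\text{ or }b\in\fp$. Hence the equivalence is immediate; the only care needed is to record that $I=B$ vacuously satisfies the implication, so the word "prime" implicitly carries "proper".

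For part (ii), recall that $B/\!\sim$ is proper by Proposition \ref{prop:congruences_are_kernels}, so "$\sim$ is prime" is equivalent to: $\sim$ is proper, $[1]\n=\zero$ in $B/\!\sim$, and every element of $B/\!\sim$ is either integral or a zero. For the forward direction, suppose $\sim$ is prime and $ab\sim ac$; this translates to $p(a)\cdot p(b)=p(a)\cdot p(c)$ in $B/\!\sim$. Since $B/\!\sim$ is integral, $p(a)$ is integral or a zero. If $p(a)$ is integral, injectivity of multiplication by $p(a)$ forces $p(b)=p(c)$, i.e.\ $b\sim c$. If $p(a)$ is a zero, i.e.\ $p(a)\=\zero$ in $B/\!\sim$, then by uniqueness of the zero in a proper blueprint (Lemma \ref{lemma_bp-fact}\eqref{bp-fact5}) we have $p(a)=p(e)$ for any $e\in B$ with $e\=\zero$ in $B$, giving $a\sim e$.

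For the converse, assume the product condition and $\sim$ proper; we verify $B/\!\sim$ is integral. It is already proper. If $[1]\=\zero$ in $B/\!\sim$ then by multiplicativity of the pre-addition every $[b]$ satisfies $[b]\=\zero$, collapsing $B/\!\sim$ and contradicting properness of $\sim$. Given any $[a]\in B/\!\sim$, suppose $p(a)p(b)=p(a)p(c)$. Then $ab\sim ac$, and the hypothesis gives either $b\sim c$, witnessing injectivity of multiplication by $p(a)$, or $a\sim e$ with $e\=\zero$ in $B$, whence $p(a)=p(e)\=\zero$ exhibits $p(a)$ as a zero. Hence every element of $B/\!\sim$ is integral or a zero.

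The main delicacy lies in the forward direction of (ii), specifically the case $p(a)$ is a zero of $B/\!\sim$: rewriting this as "$a\sim e$ for a zero $e$ of $B$" tacitly uses that the unique zero of $B/\!\sim$ is represented by some $e\in B$ with $e\=\zero$ already in $B$, which is automatic when $B$ has a zero (e.g., $B\in\bp_0$). Outside that setting the clause should be read as asserting that $[a]$ is a zero in $B/\!\sim$.
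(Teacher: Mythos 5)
Your proof is correct and is precisely the direct unraveling of Definition \ref{def_prime_ideal_and_prime_congruence} through the projection $p:B\to B/\sim$ (using Proposition \ref{prop:congruences_are_kernels} and the uniqueness of zeros in proper blueprints) that the paper treats as immediate, offering no separate argument. Your closing caveat on part (ii) is also well taken: the clause ``$a\sim e$ with $e$ a zero'' is only literally available when the zero of $B/\sim$ is represented by a zero of $B$ (automatic if $B$ has a zero), a point the paper leaves implicit and which resurfaces in the extra zero hypotheses of Proposition \ref{prop_inverse_images}.
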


\begin{lemma}\label{lemma:_absorbing_ideal_of_a_prime_congruence_is_prime}
 If $\sim$ is a prime congruence, then its absorbing ideal $I_\sim$ is a prime ideal.
\end{lemma}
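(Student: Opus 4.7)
The plan is to work in the quotient $B/\sim$, which is integral by the definition of a prime congruence, and to exploit this integrality to verify the defining property of a prime ideal. Two preliminaries come for free: the absorbing ideal $I_\sim$ is an ideal by Proposition \ref{prop:vanishing_ideals_are_ideals}, and it is a proper ideal by Lemma \ref{lemma:proper_congruence_and_proper_ideal} applied to the hypothesis that $\sim$ is proper. Thus by Lemma \ref{lemma_prime_ideal_and_prime_congruence}(i) the entire content of the claim reduces to showing that $ab\in I_\sim$ implies $a\in I_\sim$ or $b\in I_\sim$.

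Let $p\colon B\to B/\sim$ be the canonical projection. First I would observe that if $I_\sim$ is empty, the hypothesis $ab\in I_\sim$ is vacuous and we are done; so assume $I_\sim$ is non-empty. Then by Lemma \ref{lemma:absorbing_ideal_is_an_equivalence_class} it is a single equivalence class of $\sim$, and unwinding the definition of $I_\sim$ shows that its image $\bar e:=p(I_\sim)$ is an absorbing element of $B/\sim$, with the characterization $a\in I_\sim\iff p(a)=\bar e$. Since $B/\sim$ is proper, $\bar e$ is the unique absorbing element of $B/\sim$ (absorbing elements in a proper blueprint are unique, as noted in the discussion preceding Lemma \ref{lemma:absorbing_elements_in_the_absorbing_ideal}).

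Now suppose $ab\in I_\sim$, so that $p(a)p(b)=\bar e$. Since $B/\sim$ is integral, each of $p(a)$ and $p(b)$ is either integral or a zero. If $p(a)$ is a zero, then $p(a)$ is itself absorbing, and uniqueness gives $p(a)=\bar e$, hence $a\in I_\sim$. The main computational step is the case where $p(a)$ is integral: for every $c\in B/\sim$ we have
\[
 p(a)\cdot\bigl(p(b)\,c\bigr) \;=\; \bigl(p(a)p(b)\bigr)\,c \;=\; \bar e\cdot c \;=\; \bar e \;=\; p(a)\cdot p(b),
\]
and injectivity of multiplication by $p(a)$ yields $p(b)\,c=p(b)$ for all $c$. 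Thus $p(b)$ is absorbing, so $p(b)=\bar e$ and $b\in I_\sim$.

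The point that most needs care is the identification $I_\sim=p^{-1}(\bar e)$: Lemma \ref{lemma:absorbing_elements_in_the_absorbing_ideal} is stated assuming $B$ itself has an absorbing element, which need not hold here. The remedy is to work directly in $B/\sim$ using Lemma \ref{lemma:absorbing_ideal_is_an_equivalence_class} together with the observation that $a\in I_\sim$ is exactly the condition that $p(a)$ be absorbing in $B/\sim$. Once this is in place, the dichotomy \emph{integral vs.\ zero} provided by integrality of $B/\sim$ finishes the argument essentially by one line of computation, and Lemma \ref{lemma_prime_ideal_and_prime_congruence}(i) converts the conclusion into the statement that $I_\sim$ is a prime ideal.
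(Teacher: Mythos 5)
Your argument is correct, but it follows a genuinely different route from the paper's. The paper never passes to the quotient: from $ab\in I_\sim$ it deduces $a\cdot b\sim a\cdot(ab)$ directly from the definition of the absorbing ideal, applies the reformulation of prime congruences in Lemma \ref{lemma_prime_ideal_and_prime_congruence}(ii) to conclude that either $b\sim ab$ or $a\sim e$ for a zero $e$, and then uses that $ab$ and $e$ lie in $I_\sim$ together with Lemma \ref{lemma:absorbing_ideal_is_an_equivalence_class} to get $b\in I_\sim$ or $a\in I_\sim$. You instead work in $B/\sim$ with the definition of primality (integrality of the quotient), which obliges you to identify $I_\sim$ as the preimage of the unique absorbing element $\bar e$; you rightly flag this as the delicate step, since Lemma \ref{lemma:absorbing_elements_in_the_absorbing_ideal} does not apply verbatim, and your substitute --- Lemma \ref{lemma:absorbing_ideal_is_an_equivalence_class}, properness of $B/\sim$ from Proposition \ref{prop:congruences_are_kernels}, and uniqueness of absorbing elements in a proper blueprint --- is sound. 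Your zero/integral dichotomy for $p(a)$ is exactly the paper's alternative ``$a\sim e$ or cancel'' transported downstairs, so in effect you re-derive the needed instance of Lemma \ref{lemma_prime_ideal_and_prime_congruence}(ii) inside the quotient. What the paper's route buys is brevity (no quotient, no fibre identification); what yours buys is the transparent analogy with ring theory (the quotient by a prime congruence is integral, and the class of its absorbing element pulls back to a prime ideal), together with explicit treatment of properness of $I_\sim$ and of the case $I_\sim=\emptyset$, which the paper's proof leaves implicit.
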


\begin{proof}
 Let $ab\in I_\sim$. Then $ab\sim a(ab)$ by the definition of the absorbing ideal. Since $\sim$ is prime, either $b\sim ab$ or $a\sim e$ with $e\=\zero$. Since both $ab$ and $e$, if it exists, are elements in $I_\sim$, either $a$ or $b$ is so, too. This shows that $B-I_\sim$ is a multiplicative subset and thus that $I_\sim$ is prime.
\end{proof}

 Consequently, an ideal $I$ is prime if the quotient $B/I$ is an integral blueprint. More precisely, an ideal $I$ is prime if and only if the quotient $B/I$ is without (non-trivial) zero-divisors. An integral blueprint is clearly without zero divisors, but the contrary is not true, in contrast to the situation for rings: the monoid $B=\{0,e,1\}$ with zero $0\=\zero$ and idempotent element $e^2=e$ is without zero divisors, but not integral. Moreover, $I=\{0\}$ is a prime ideal, but $\sim_I$ is not a prime congruence.

 Let $f:B\to C$ be a morphism of blueprints and $\sim$ a congruence on $C$. The inverse image of $\sim$ is the relation $\sim^*=f^{-1}(\sim)$ on $B$ that is defined by $a\sim^* b$ if and only if $f(a)\sim f(b)$.

\begin{prop}\label{prop_inverse_images}
 Let $f:B\to C$ be a morphism of blueprints. 
 \begin{enumerate}
  \item\label{inverse1} Let $I$ be an ideal of $C$. Then $f^{-1}(I)$ is an ideal of $B$. If $I$ is prime, then $f^{-1}(I)$ is prime.
  \item\label{inverse2} Let $\sim$ be a congruence on $C$. Then $f^{-1}(\sim)$ is a congruence of $B$. If $\sim$ is prime and if either $B$ is with a zero or $C$ is without a zero, then $f^{-1}(\sim)$ is prime.
  \item\label{inverse3} Let $\sim$ be a congruence on $C$ and $I_\sim$ its absorbing ideal. Assume that both $B$ and $C$ have a zero. Then $f^{-1}(I_\sim)$ equals the absorbing ideal of $f^{-1}(\sim)$.
 \end{enumerate}
\end{prop}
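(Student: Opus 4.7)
My plan splits along the three parts. Parts (i) and (ii) are direct axiom verifications with a short primality argument, while part (iii) requires exploiting the zero of $B$ to push $\sim^*$-information through $f$ back to a relation involving the zero of $C$.

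For (i), I would verify (I1)--(I3) for $f^{-1}(I)$ in turn. Axiom (I1) is immediate from multiplicativity of $f$. Axiom (I2) follows because any zero $e\=\zero$ of $B$ is sent by $f$ to a zero $f(e)\=\zero$ of $C$, which lies in $I$ by (I2) for $I$, so $e\in f^{-1}(I)$. For (I3) I would take a witnessing sequence
\[ a \ \= \ \sum c_{1,k} \ \sim^{f^{-1}(I)}_\N \ \dotsb \ \sim^{f^{-1}(I)}_\N \ \sum d_{n,k} \ \= \ b \]
with $b\in f^{-1}(I)$ and apply $f$ termwise: whenever $c_{i,k}\sim^{f^{-1}(I)}d_{i,k}$ either $f(c_{i,k})=f(d_{i,k})$ or both land in $I$, so the image is a valid $\sim^I_\N$-chain from $f(a)$ to $f(b)\in I$, forcing $f(a)\in I$ by (I3) for $I$. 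Primality is a one-line observation: if $C-I$ is multiplicative, then its preimage $B-f^{-1}(I)$ is too, by multiplicativity of $f$.

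For (ii), the congruence axioms (C1)$^*$ and (C2)$^*$ for $\sim^*:=f^{-1}(\sim)$ are established exactly as in Proposition \ref{prop:kernels_are_congruences}, by applying $f$ to each term of a witnessing sequence. For primality I would use the characterization in Lemma \ref{lemma_prime_ideal_and_prime_congruence}(ii). Suppose $ab\sim^* ac$; then $f(a)f(b)\sim f(a)f(c)$, so either $f(b)\sim f(c)$ (giving $b\sim^* c$) or $f(a)\sim e'$ for some $e'\=\zero$ in $C$. If $C$ has no zero, no such $e'$ exists, so the first alternative always holds. If instead $B$ has a zero $0_B$, then $f(0_B)\=\zero$ in $C$, and in the quotient $C/\sim$, which is proper because it is integral, both $[e']$ and $[f(0_B)]$ are zeros, hence equal by Lemma \ref{lemma_bp-fact}(v); thus $e'\sim f(0_B)$, yielding $f(a)\sim f(0_B)$ and $a\sim^* 0_B$. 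Properness of $\sim^*$ in the latter sub-case follows because $[1]$ and $[f(0_B)]$ are distinct in $C/\sim$ (since $1\n=\zero$ there), whence $1\not\sim^* 0_B$.

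For (iii), both inclusions are direct. The inclusion $f^{-1}(I_\sim)\subset I_{\sim^*}$ is immediate: if $f(a)c\sim f(a)$ for every $c\in C$, the special case $c=f(b)$ gives $ab\sim^* a$ for every $b\in B$. The reverse inclusion is where the hypothesis on $B$ enters: for $a\in I_{\sim^*}$, take $b=0_B$ to get $a\cdot 0_B\sim^* a$; by Lemma \ref{lemma_bp-fact}(iv), $a\cdot 0_B\= 0_B$, and Lemma \ref{lemma:easy_properties_of_congruences}(i) then gives $a\sim^* 0_B$, i.e., $f(a)\sim f(0_B)$. Since $f(0_B)$ is a zero of $C$, Lemma \ref{lemma_bp-fact}(iv) gives $f(0_B)\cdot c\= f(0_B)$ for every $c\in C$, so $f(a)c\sim f(0_B)c\sim f(0_B)\sim f(a)$, proving $f(a)\in I_\sim$.

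The main obstacle is the primality step in (ii): the asymmetric hypothesis ``$B$ has a zero or $C$ has none'' is precisely what is needed to eliminate the second alternative in Lemma \ref{lemma_prime_ideal_and_prime_congruence}(ii), and in the ``$B$ with a zero'' sub-case one must pass to the proper quotient $C/\sim$ in order to identify two a priori distinct zeros of $C$. Once this is understood, the same mechanism of ``pulling $\sim$ back via a chain and then pushing forward with $f$'' handles the remaining bookkeeping uniformly.
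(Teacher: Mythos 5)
Your proposal is correct and takes essentially the same route as the paper: parts (i) and (ii) are verified axiom by axiom exactly as in the paper's proof, with the same case split for primality of $f^{-1}(\sim)$ (where you usefully spell out, via the proper quotient $C/\sim$ and uniqueness of zeros, the paper's terse identification $f(e')\=\zero\=e$, and you even note properness of $f^{-1}(\sim)$, which the paper leaves implicit). The only variation is in part (iii), where you prove the equality by a direct double inclusion using the zero of $B$ as a test element, while the paper instead invokes that the absorbing ideal is the equivalence class of a zero and that preimages of $\sim$-classes are classes of $f^{-1}(\sim)$; both arguments are valid and of comparable length.
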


We need to make an additional assumption on the existence of zeros in parts \eqref{part2} and \eqref{part3} of the proposition as the following examples show. Concerning \eqref{inverse2}, let $A=\{0,1\}$ and define $B=\bpgenquot{A}{\emptyset}$ and $C=\bpgenquot{A}{0\=\zero}$. The identity morphism on $A$ induces a morphism $f:B\to C$ of blueprints. Then the minimal congruence $\sim$ on $C$ with two equivalence classes $\{0\}$ and $\{1\}$ is a prime congruence while $f^{-1}(\sim)$ is not. Concerning \eqref{part3}, consider the inclusion $\{1\}\to\{0,1\}$ of monoids and let $\sim$ be the minimal congruence on $C$. Then the vanishing ideal of $\sim$ is $\{0\}$ while the vanishing ideal of its inverse image is $\{1\}$.

\begin{proof}
 We prove \eqref{part1} of the proposition. We show that $J=f^{-1}(I)$ satisfies the three axioms of an ideal. Let $a\in J$ and $b\in B$, then $f(ab)=f(a)f(b)\in I$ and thus $ab\in J$. This shows axiom \textup{(I1)}. If $e\=\zero$ is a zero of $B$, then $f(e)\=\zero$ is  a zero of $C$ and thus contained in $I$, which means that $e\in J$. This shows axiom \textup{(I2)}. Given a sequence
 \[
   a \ = \ \sum c_{1,k} \ \sim^J_\N \quad \dotsb \quad \sim^J_\N \ \sum d_{n,k} \ = \ b  
 \]
 in $B$ with $b\in J$, then we have 
 \[
   f(a) \ = \ \sum f(c_{1,k}) \ \sim^I_\N \quad \dotsb \quad \sim^I_\N \ \sum f(d_{n,k}) \ = \ f(b)  
 \]
 in $C$ with $f(b)\in I$. Therefore $f(a)\in I$ and $a\in J$. This shows axiom \textup{(I3)}.

 Suppose that $I$ is prime. If $ab\in J$, then $f(ab)\in I$ and either $f(a)\in I$ or $f(b)\in I$. This means that either $a\in J$ or $b\in J$. Thus $J$ is prime.

 We proceed with \eqref{part2}. Let $\sim$ be a congruence on $C$ and let $\sim^*=f^{-1}(\sim)$ be its inverse image. We verify the two axioms of a congruence. If $a\sim^*b$ and $c\sim^*d$, then $f(a)\sim^*f(b)$ and $f(c)\sim^*f(d)$. Therefore $f(ac)=f(a)f(c)\sim f(b)f(d)=f(bd)$ and thus $ac\sim^* bd$. This shows property \textup{(C1)}$^*$ of a congruence. Given a sequence
 \[
   a \ = \ \sum c_{1,k} \ \sim^*_\N \quad \dotsb \quad \sim^*_\N \ \sum d_{n,k} \ = \ b  
 \]
 in $B$, then we have 
 \[
   f(a) \ = \ \sum f(c_{1,k}) \ \sim_\N \quad \dotsb \quad \sim_\N \ \sum f(d_{n,k}) \ = \ f(b)  
 \]
 in $C$. Therefore $f(a)\sim f(b)$, which means that $a\sim^* b$. This shows property \textup{(C2)}$^*$ of a congruence.

 Suppose that $\sim$ is prime and that $B$ is with a zero or $C$ is without a zero. If $ab\sim^*ac$, then $f(a)f(b)=f(ab)\sim f(ac)=f(a)f(c)$. Therefore either $f(b)\sim f(c)$ or $f(a)\sim e$ where $e$ is a zero. In the former case, we have $b\sim^* c$, in the latter case, $B$ has a zero $e'$ by assumption, which maps to $f(e')\=\zero\=e$; this means that $f(a)\sim f(e')$ and $a\sim^* e'$. Thus $\sim^*$ is prime.

 We prove \eqref{part3}. Let $\sim^*=f^{-1}(\sim)$ be the inverse image of $\sim$. Then the absorbing ideal $I_{\sim^*}$ of $\sim^*$ equals the equivalence class of a zero $e\=\zero$. On the other hand, it is clear that the inverse image of an equivalence class of $\sim$ is an equivalence class of $\sim^*$. Since $f(e)$ is a zero in $C$ and therefore contained in $\in I_\sim$, we have that $f^{-1}(I_\sim)=I_{\sim^*}$. This completes the proof of the proposition.
\end{proof}


\subsection{Maximal congruences and maximal ideals}
\label{section:maximal_congruences_and_maximal_ideals}

A \emph{maximal congruence} is a proper congruence that is not contained in any larger proper congruence.  A \emph{maximal ideal} is a proper ideal that is not contained in any larger proper ideal. 

\begin{prop}
 A maximal ideal is a prime ideal. If $B$ is with a zero, then a maximal congruence is a prime congruence.
\end{prop}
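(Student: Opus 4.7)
The plan is to prove both parts by contrapositive, mimicking the classical ring-theoretic arguments but being careful with the subtler congruence/ideal axioms of blueprints.

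For the first assertion, I would take a maximal ideal $\fm$ and show directly that $B-\fm$ is multiplicatively closed. Given $a,b\in B$ with $ab\in\fm$ and assuming $b\notin\fm$, I would form the ``colon'' set
\[
 \fm' \ = \ \{\, c\in B \ \mid \ cb\in\fm \,\}.
\]
Axioms \textup{(I1)} and \textup{(I2)} for $\fm'$ follow from a short calculation (using that zeros are absorbing, via Lemma \ref{lemma_bp-fact}\eqref{bp-fact4}, so that $eb$ is a zero whenever $e$ is). The crucial check is \textup{(I3)}: if $c\sim_{\fm'} c'$ via a sequence $c\=\sum c_{1,k}\sim^{\fm'}_\N\cdots\sim^{\fm'}_\N\sum d_{n,k}\= c'$ with $c'\in\fm'$, then multiplying the whole sequence by $b$ turns each $\sim^{\fm'}$ step into a $\sim^\fm$ step, so $cb\sim_\fm c'b$ and $c'b\in\fm$ forces $cb\in\fm$. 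Then $\fm\subsetneq\fm'$ (since $a\in\fm'\setminus\fm$), so by maximality $\fm'=B$, whence $1\in\fm'$ and $b\in\fm$, contradicting the choice of $b$.

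For the second assertion, I would take a maximal congruence $\sim$ on a blueprint $B$ with zero $0$, and suppose by contrapositive that $\sim$ is not prime: by Lemma \ref{lemma_prime_ideal_and_prime_congruence}(ii) there exist $a,b,c\in B$ with $ab\sim ac$, $b\not\sim c$, and $a\not\sim e$ for every zero $e$. Define a new relation
\[
 x\sim' y \quad \Longleftrightarrow \quad ax\sim ay.
\]
I would verify the two axioms of Lemma \ref{lemma:equivalent_definition_of_a_congruence}: \textup{(C1)}$^*$ follows from multiplying relations and chaining $axz\sim ayz\sim ayw$; \textup{(C2)}$^*$ is obtained by multiplying an arbitrary $\sim'$-sequence by $a$ term-wise to obtain a $\sim$-sequence between $ax$ and $ay$. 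Clearly $\sim\ \subseteq\ \sim'$, and $b\sim' c$ while $b\not\sim c$, so the containment is strict. The properness of $\sim'$ is exactly where the zero of $B$ enters: if $\sim'$ were improper, then $1\sim' 0$, i.e.\ $a\sim a\cdot 0$, and since $a\cdot 0\=0$ forces $a\cdot 0\sim 0$ by Lemma \ref{lemma:easy_properties_of_congruences}(i), we would get $a\sim 0$, contradicting the assumption that $a$ is not related to any zero. This contradicts the maximality of $\sim$.

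The main obstacle in both parts is verifying axiom \textup{(C2)}$^*$/\textup{(I3)}, because these are formulated via the intricate alternating sequences of $\=$ and $\sim^{\bullet}_\N$ rather than a simple closure condition. The arguments above resolve this uniformly by multiplying such a sequence term-wise by the element $b$ (respectively $a$); the axioms \textup{(A4)} and \textup{(A5)} of the pre-addition then guarantee that both the $\=$-steps and the linear-extension steps survive the multiplication, converting a chain for $\sim'$ (or $\sim_{\fm'}$) into a chain for $\sim$ (or $\sim_\fm$). The hypothesis that $B$ has a zero in part (ii) is used precisely to turn ``$a$ not related to a zero'' into ``$1\not\sim' 0$'' and thereby ensure that $\sim'$ remains proper; without a zero, this step would fail, explaining why the hypothesis is necessary in the statement.
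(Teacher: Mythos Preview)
Your argument is correct in both parts, but the route differs from the paper's. The paper argues by \emph{enlarging}: for the ideal statement it forms the ideal $\langle \fm\cup\{a\}\rangle_B$ generated by $\fm$ and the outside element $a$, uses maximality to write $1$ as an element of this ideal via a defining sequence, and then multiplies that sequence by $b$ to force $b\in\fm$; for the congruence statement it forms the congruence generated by $\sim$ together with the single new relation $b\sim c$, deduces $1\sim_J e$, and multiplies the witnessing sequence by $a$. You instead argue by a \emph{colon/transport} construction: the set $\fm'=\{c\mid cb\in\fm\}$ and the relation $x\sim' y\Leftrightarrow ax\sim ay$. Both strategies hinge on the same core manoeuvre---multiplying an alternating $\=/\sim^\bullet_\N$ sequence termwise by a fixed element to convert it into a sequence for the original ideal or congruence---so the ``hard'' step (verifying \textup{(I3)} resp.\ \textup{(C2)}$^*$) is handled identically. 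Your approach has the mild advantage that the object you construct is a congruence/ideal by direct verification, avoiding any appeal to the description of generated ideals and congruences in paragraph \ref{pg:generated_ideal_and_congruence}; the paper's approach is slightly more symmetric with how such arguments are usually phrased in ring theory. One cosmetic point: in your \textup{(I2)} check it is quicker to note that any zero $e$ already lies in $\fm$ by \textup{(I2)} for $\fm$, whence $eb\in\fm$ by \textup{(I1)}, so $e\in\fm'$---no need to invoke Lemma \ref{lemma_bp-fact}\eqref{bp-fact4}.
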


\begin{proof}
 Let $\fm$ be a maximal ideal and assume $ab\in\fm$, but $a\notin\fm$. We have to show that $b\in\fm$. Let $J=\fm\cup\{a\}$, then $\igen J_B=B$ by the maximality of $\fm$. Thus there is a sequence 
 \[
  1 \ \= \ \sum c_{1,k} \ \sim^{J}_\N \quad\dotsb\quad \sim^{J}_\N  \ \sum d_{n,k} \ \= \ f
 \]
 where $f\in JB=\fm\cup aB$. If $c\sim^J d$, then $bc\sim^\fm bd$ since $ab\in\fm$. Thus multiplying the above sequence by $b$ yields
 \[
  b \ \= \ \sum bc_{1,k} \ \sim^\fm_\N \quad\dotsb\quad \sim^\fm_\N  \ \sum bd_{n,k} \ \= \ bf
 \]
 where $bf\in\fm$. This means that $b\in \fm$ what we had to show. Thus $\fm$ is prime.

 Let $e\=\zero$ be a zero of $B$ and $\sim$ a maximal congruence. Assume that $ab\sim ac$, but $b\nsim c$. We have to show that $a\sim e$. Let $J=\{d\sim' d'|d\sim d'\text{ or }d=b\text{ and }d'=c\}$. By the maximality of $\sim$, we have $1\sim_J e$. Thus there is a sequence 
 \[
  1 \ \= \ \sum c_{1,k} \ \sim^{J}_\N \quad\dotsb\quad \sim^{J}_\N  \ \sum d_{n,k} \ \= \ e.
 \]
 If $d\sim^J d'$, then $ad\sim ad'$ since $ab\sim ac$. Multiplying the above sequence by $a$ yields
 \[
  a \ \= \ \sum ac_{1,k} \ \sim^\fm_\N \quad\dotsb\quad \sim^\fm_\N  \ \sum ad_{n,k} \ \= \ ae \ \= e
 \]
 and therefore $a\sim e$, which was to be shown. Thus $\sim$ is a prime congruence.
\end{proof}


\section{Blue schemes}
\label{section:blue_schemes}


\subsection{Definition}
\label{section:definition_of_blue_schemes}

As in classical scheme theory, we will define the spectrum of a blueprint to be a topological space together with a structure sheaf, which is a sheaf in $\bp$ in this case. A blue scheme is a space that is locally isomorphic to spectra of blueprints. We begin with defining the category of blueprinted spaces, in which we embed the category of spectra of monoids and the category of blue schemes.

\begin{pg}
 A \emph{blueprinted space} is a topological space $X$ together with a sheaf $\cO_X$ in $\bp$. A \emph{morphism of blueprinted spaces} is a continuous map $\varphi:X\to Y$ together with a morphism $\varphi^\#:\cO_Y\to\cO_X$ of sheaves. 

 By Proposition \ref{prop:colimits_of_directed_diagrams_in_bp}, colimits of directed systems of blueprints exist, and we can define the \emph{stalk $\cO_{X,x}$ of $\cO_X$ at $x\in X$} as the colimit of all open subsets $U$ of $X$ that contain $x$ together with the inclusion maps. A morphism $\varphi:X\to Y$ of blueprinted spaces induces blueprint morphisms $\varphi^\#_x: \cO_{Y,\varphi(x)}\to\cO_{X,x}$ between stalks for every $x\in X$.

 A blueprinted space $X$ is \emph{locally blueprinted}, if all stalks are \emph{local blueprints}, i.e.\ if $\cO_{X,x}$ has a unique maximal ideal $\fm_x$ for every $x\in X$. A morphism $\varphi:X\to Y$ of locally blueprinted spaces is \emph{local} if for every $x\in X$ and $y=\varphi(x)$, the morphism $\varphi^\#_x: \cO_{Y,y}\to\cO_{X,x}$ between stalks maps the maximal ideal $\fm_y$ of $\cO_{Y,y}$ to the maximal ideal $\fm_x$ of $\cO_{X,x}$.
 
 For a prime ideal $\fp$ of a blueprint $B$, we denote by $B_\fp$ the localization of $B$ at $S=B-\fp$.
\end{pg}

\begin{df}\label{def:spectrum_of_a_blueprint}
 The \emph{spectrum $\Spec B$ of a blueprint $B$} is the blueprinted space whose underlying set is $\Spec B=\{\fp\subset B\text{ prime ideal}\}$. The topology of $\Spec B$ is generated by the sets $U_h=\{\fp\in\Spec B|h\notin \fp\}$ where $h$ ranges through $B$. Note that $U_0=\emptyset$ and $U_1=\Spec B$. Since $U_h\cap U_g=U_{gh}$, the open sets $U_h$ form a basis of the topology. The \emph{structure sheaf $\cO_X$ of $X=\Spec B$} is defined by
 \[
  \cO_X(U) \ = \ \left\{\ s:U\to\coprod_{\fp\in U}B_\fp \ \left| \ \begin{array}{c}\text{For all }\fp\in U,\ s(\fp)\in B_\fp\text{ and there are}\\ 
                                                                                    h\in (B-\fp),\ a\in B\text{ and }n\in\N\text{ such that}\\
                                                                                    U_h\subset U\text{ and for all }\fq\in U_h,\ s(\fq)=\frac{a}{h^n}\text{ in }B_\fq.
                                                                   \end{array}\ \right.\right\}
 \]
 for any open subset $U$ of $B$ where we apply the usual convention that $\cO_X(\emptyset)$ is the terminal object $\bpgenquot{\{0\}}{0\=\zero}$ of $\bp$.
\end{df}

\begin{pg}
 In other words, $\cO_X(U)$ is the set of locally represented sections on $U$. The sheaf $\cO_X$ is indeed a sheaf in $\bp$ since $\cO_X(U)$ caries the following structure for all open subsets $U$ of $X$: for two sections $s,t\in \cO_X(U)$, the section $st$ sends $\fp\in U$ to $s(\fp)\cdot t(\fp)$ in $B_\fp$, and we have $\sum s_i\=\sum t_j$ in $\cO_X(U)$ if and only if $\sum s_i(\fp)\=\sum t_j(\fp)$ in $B_\fp$ for all $\fp\in\cO_X(U)$.

 It is clear that $\cO_{X,\fp}=B_\fp$ (the proof is completely analogous to the case of usual schemes, cf.\ \cite[Prop.\ 2.2 (a)]{Hartshorne}). Since $B_\fp$ is a local blueprint with maximal ideal $\fp B_\fp$, the spectrum $\Spec B$ of $B$ is a locally blueprinted space. 
\end{pg}

\begin{df}
 An \emph{affine blue scheme} is a locally blueprinted space that is isomorphic to the spectrum of a blueprint. A \emph{blue scheme} is a locally blueprinted space that has a covering by affine blue schemes. An open subset that is isomorphic to an affine blue scheme is called an \emph{affine open subscheme}. A \emph{morphism of (affine) blue schemes} is a local morphism of locally blueprinted spaces. We denote the \emph{category of blue schemes} by $\BSch$.
\end{df}

\begin{pg}
 We need more insight into the structure of blue schemes to prove that the affine open subschemes of a blue scheme form a basis of the topology and that a morphism of blue schemes is locally described by morphisms of blueprints. We postpone the proof of these facts to Section \ref{section:affine_open_subschemes}.

 Note that as in the case of usual schemes, a subset $V$ of $\Spec B$ is closed if and only if there is an ideal $I$ of $B$ such that $V$ equals $V(I)=\{\fp\in\Spec B|I\subset \fp\}$ (cf.\ \cite[p.\ 70]{Hartshorne}).
 In particular, the complement of $U_h$ is $V(\igen{h})$. If we define the \emph{radical of $I$} as the ideal 
 \[
  \Rad(I) \ = \ \{ \ a\in B \ | \ \exists n\in \N\text{ such that }a^n\in I \ \},
 \]
 then $V(I)=V(\Rad(I))$.
\end{pg}

\begin{lemma}
 For all $h\in B$, the subspace $U_h$ of $\Spec B$ is compact.
\end{lemma}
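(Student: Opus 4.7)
The plan is to mimic the classical proof for affine schemes, but careful attention has to be paid to the non-ring-theoretic nature of ideals and their radicals in the blueprint setting. Since the sets $U_g$ form a basis for the topology on $\Spec B$, it suffices to show that every cover of $U_h$ by basic open sets admits a finite subcover. So suppose $U_h \subset \bigcup_{\alpha \in A} U_{g_\alpha}$.

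First I would translate this inclusion into ideal-theoretic language. A point $\fp \in \Spec B$ lies in $U_h$ iff $h \notin \fp$, and it lies in some $U_{g_\alpha}$ iff $\fp$ fails to contain $\{g_\alpha\}_{\alpha \in A}$. Hence the inclusion is equivalent to the statement that every prime ideal $\fp$ containing $I = \igen{\{g_\alpha\}_{\alpha \in A}}_B$ also contains $h$. Equivalently, $h \in \bigcap_{\fp \supset I}\fp$.

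Next I would establish the blueprint analogue of the classical formula
\[
  \bigcap_{\fp \supset I,\ \fp\text{ prime}} \fp \quad = \quad \Rad(I),
\]
using Zorn's lemma in the usual way: if $h \notin \Rad(I)$, then the multiplicative subset $S = \{h^n \mid n \geq 0\}$ is disjoint from $I$, and by Zorn there exists an ideal $\fp$ containing $I$ that is maximal among those disjoint from $S$. A short verification using Lemma~\ref{lemma_prime_ideal_and_prime_congruence}(i) and the description of generated ideals in~\ref{pg:generated_ideal_and_congruence} shows that such a $\fp$ is prime. This is the step I expect to be the main technical obstacle, since the classical argument uses additive manipulations that here must be translated into sequences of pre-additive and $\sim^I_\N$-relations as in Lemma~\ref{lemma:equivalent_definition_of_a_congruence}.

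Granted this, we conclude $h^n \in I$ for some $n \in \N$. Now I would invoke the explicit description of $I = \igen{\{g_\alpha\}}_B$ from~\ref{pg:generated_ideal_and_congruence}: the element $h^n$ is certified to lie in $I$ by a finite sequence
\[
 h^n \ \= \ \sum c_{1,k} \ \sim^{J}_\N \ \quad\dotsb\quad \ \sim^{J}_\N \ \sum d_{m,k} \ \= \ f
\]
with $f \in JB$, where $J = \{g_\alpha\}$. Every such sequence involves only finitely many summands, and each summand only involves finitely many generators, so only finitely many $g_{\alpha_1}, \dotsc, g_{\alpha_r}$ actually appear. Hence $h^n \in \igen{g_{\alpha_1}, \dotsc, g_{\alpha_r}}_B$. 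Finally, any prime $\fp \in U_h$ has $h \notin \fp$, so $h^n \notin \fp$ (as $B-\fp$ is multiplicatively closed), and therefore $\fp$ cannot contain all of $g_{\alpha_1}, \dotsc, g_{\alpha_r}$. This gives $U_h \subset \bigcup_{j=1}^r U_{g_{\alpha_j}}$, proving compactness.
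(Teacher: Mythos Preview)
Your approach is essentially identical to the paper's: reduce to a cover by basic opens, pass to complements to obtain $V(\igen{\{g_\alpha\}}) \subset V(\igen{h})$, deduce $h^n \in \igen{\{g_\alpha\}}_B$, and then observe that the defining sequence for this membership (from paragraph~\ref{pg:generated_ideal_and_congruence}) involves only finitely many of the $g_\alpha$. The paper simply asserts the step $V(J)\subset V(\igen{h}) \Rightarrow h\in\Rad(J)$ without comment, whereas you correctly flag that the blueprint analogue of $\Rad(I)=\bigcap_{\fp\supset I}\fp$ needs an argument; your Zorn-based outline is the right one, and the primality verification can indeed be carried out by the same multiplication trick used in the paper's proof that maximal ideals are prime (multiply the sequence witnessing $h^m\in\igen{\fp\cup\{x\}}_B$ by $y$ to get $yh^m\in\fp$, then multiply the sequence for $h^n$ by $h^m$).
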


\begin{proof}
 Let $U_h=\bigcup_{i\in I} U_i$ be a covering. Since a basis of the topology is given by subsets of the form $U_g$, we may assume that $U_i=U_{h_i}$ for certain elements $h_i\in B$. By taking complements, the condition $U_h=\bigcup_{i\in I} U_i$ becomes equivalent to $V(\igen{\{h_i\}_{i\in I}})=\bigcap_{i\in I}V(\igen{h_i})\subset V(\igen{h})$. This, in turn, means nothing else than $h\in\Rad(\igen{\{h_i\}_{i\in I}})$ resp.\ $h^n\in\igen{\{h_i\}_{i\in I}}$ for some $n\in \N$. By the definition of the ideal generated by $\{h_i\}_{i\in I}$ (cf.\ paragraph \ref{pg:generated_ideal_and_congruence}), this means that we have a sequence
 \[
  h^n \ \= \ \sum c_{1,k} \ \sim^J_\N \quad\dotsb\quad \sim^J_\N \ \sum d_{n,k} \ \= \ b
 \]
 where $J=\{h_i\}_{i\in I}B$, $b\in J$ and $c_{i,k}, d_{i,k}\in B$. Since there are only finitely elements involved in this sequence, there is a finite subset $\{h_i\}_{i\in I_0}$ of $\{h_i\}_{i\in I}$ such that 
 \[
  h^n \ \= \ \sum c_{1,k} \ \sim^{J_0}_\N \quad\dotsb\quad \sim^{J_0}_\N \ \sum d_{n,k} \ \= \ b
 \]
 with $J_0=\{h_i\}_{i\in I_0}B$ and $b\in J_0$. This means that $h^n\in \igen{\{h_i\}_{i\in I_0}}$ and $U_h=\bigcup_{i\in I_0}U_{h_i}$. Thus $U_h$ is compact.
\end{proof}

\begin{pg}
 A morphism $f:B\to C$ of blueprints induces a morphism $f^\ast:\Spec C\to \Spec B$ of locally blue\-printed spaces by taking the inverse image $f^\ast(\fq)=f^{-1}(\fq)$ of prime ideals $\fq$ of $C$. This map is continuous and induces a morphism of sheaves (cf.\ \cite[Prop.\ 2.3]{Hartshorne} for the analogous case of usual schemes). In particular, if $\fq\subset C$ is a prime ideal and $\fp=f^{-1}(\fq)$ is the inverse image of $\fq$, then $f_\fq:B_\fp\to C_\fq$ is a local morphism, i.e.\ $f(\fp B_\fp)\subset \fq C_\fq$. This defines a contravariant functor 
 \[
  \Spec: \ \bp \ \longrightarrow \ \left\{\begin{array}{c}
                                           \text{locally blueprinted spaces}\\\text{and local morphisms}
                                          \end{array} \right\}.
 \]
 This functor is, in contrast to usual scheme theory, not full as the following example shows. More precisely, we see that the functor of global sections is in general not a left-inverse of $\Spec$, and non-isomorphic blueprints can give rise to isomorphic spectra. We will investigate the precise connection between $\Spec$ and the functor of global sections in the next section.
\end{pg}

\begin{ex}\label{ex:spec_and_global_sections}
 Let $B$ be the blueprint associated to the multiplicative subset $A$ of $\Z[S,T]$ that is generated by $\{T,1-T, TS, (1-T)S\}$ (cf.\ Section \ref{section:subsets_of_semirings}). We abbreviate: $h_1=T$, $h_2=1-T$, $a_1=TS$ and $a_2=(1-T)S$. Then the underlying monoid $A$ of $B$ is $\F_1[h_1,h_2,a_1,a_2]/\sim$ where $\F_1=\{0,1\}$ and $\sim$ is the monoid congruence generated by $a_1h_2=a_2h_1$. The pre-addition of $B$ is generated by $h_1+h_2\=1$. 

 To determine the prime ideals of $B$, we note that a prime ideal of $B$ is in particular a prime ideal of the monoid $A$ (with a zero $0$) and a prime ideal of $A$ lifts to a prime ideal of $\F_1[h_1,h_2,a_1,a_2]$. Since prime ideals of $\F_1[h_1,h_2,a_1,a_2]$ are all of the form $\gen{J}$ with $J\subset\{h_1,h_2,a_1,a_2\}$, every prime ideal of $B$ must also be generated by a subset of $\{h_1,h_2,a_1,a_2\}$. One easily verifies case by case that the set of prime ideals of $B$ is $\Spec B=\{(0),\fp_1,\fp_2,\fp_a\}$ where $\fp_1=(h_1,a_1)$, $\fp_2=(h_2,a_2)$ and $\fp_a=(a_1,a_2)$.

 We construct the following global section. Consider the open subsets $U_{h_1}=\{(0),\fp_a,\fp_2\}$ and $U_{h_2}=\{(0),\fp_a,\fp_1\}$, which cover $X=\Spec B$. We define the section $s:X\to\coprod_{\fp\in X} B_\fp$ by
 $s(\fp)=\frac{a_1}{h_1}$ if $\fp\in U_{h_1}$ and $s(\fp)=\frac{a_2}{h_2}$ if $\fp\in U_{h_2}$. Note that in both $B_{(0)}$ and $B_{\fp_a}$, we have $\frac{a_1}{h_1}=\frac{a_2}{h_2}$ since $a_1h_2=a_2h_1$ in $B$; thus $s$ is indeed an element of $\cO_X(X)$.

 As we will verify in the following section, $\Gamma B=\cO_X(X)$ caries the natural structure of a blueprint and the association $a\to (s_a:\fp\to b)$ defines an inclusion $B\to\Gamma B$. In $\Gamma B$, we have the section $s$ as constructed above. It satisfies $s\cdot s_{h_1}=s_{a_1}$ and $s\cdot s_{h_2}=s_{a_2}$. But there is no $b\in B$ such that $bh_1=a_1$ and $bh_2=a_2$. Thus $B\to\Gamma B$ cannot be an isomorphism.

 This shows that the global section functor is not a left-inverse of $\Spec$. Moreover, we will see in the following section that the morphism $B\to\Gamma B$ induces an isomorphism $\Spec \overline B\stackrel\sim\to\Spec B$. Thus $\Spec$ is not full.
\end{ex}


\subsection{Global sections and globalization}
\label{section:global_sections}

As demonstrated in Example \ref{ex:spec_and_global_sections}, the global sections $\Gamma B=\Gamma(X,\cO_X)$ of $X=\Spec B$ are in general not equal to $B$. We will show in this section that $\Gamma$ is an idempotent endofunctor on $\bp$, i.e.\ $\Gamma\Gamma B=\Gamma B$. In particular, $\Spec$ restricts to a fully faithful functor from the essential image of $\Gamma$ to the category of locally blueprinted spaces with local morphisms, and the global section functor is its left-inverse.

\begin{pg}
 Let $B$ be a blueprint and $h\in B$. We define a map $\sigma_h:B[h^{-1}]\to \cO_X(U_h)$ as follows. Since for all $\fp\in U_h$, $h$ is invertible in $B_\fp$, we have canonical maps $B[h^{-1}]\to B_\fp$. An element of $B[h^{-1}]$ is of the form $\frac{a}{h^n}$ for some $a\in B$ and $n\in \N$, and we denote the image of $\frac{a}{h^n}$ in $B_\fp$ by the same symbol $\frac{a}{h^n}$. For an element $b=\frac{a}{h^n}\in B[h^{-1}]$ we define the section $\sigma_h(b)=s_{b}:U_h\to \coprod_{\fp\in U_h}$ as $s_b(\fp)=\frac{a}{h^n}\in B_\fp$. 
\end{pg}

\begin{lemma}\label{lemma:blpr_injects_into_sections}
 Let $B$ be a proper blueprint. Then the map $\sigma_h:B[h^{-1}]\to \cO_X(U_h)$ is an injective morphism of blueprints for every $h\in B$.
\end{lemma}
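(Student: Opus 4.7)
The plan is to verify that $\sigma_h$ is a well-defined morphism of blueprints, and then establish injectivity through the compactness of $U_h$ combined with a Nullstellensatz-type argument for blueprints.

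The first step is routine: the section $\sigma_h(\frac{a}{h^n})$ is manifestly locally represented on $U_h$, and $\sigma_h$ is multiplicative and sends $1$ to $1$ by construction. Preservation of the pre-addition reduces to the observation that each canonical map $B[h^{-1}] \to B_\fp$ (for $\fp \in U_h$) is itself a morphism of blueprints, together with the fact that the pre-addition on $\cO_X(U_h)$ is defined pointwise in the stalks.

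For injectivity, suppose $\sigma_h(\frac{a}{h^n}) = \sigma_h(\frac{b}{h^m})$. After replacing $a, b$ with $ah^m, bh^n$, we may assume $m = n$ and it suffices to exhibit $k \in \N$ with $h^{k+n}a = h^{k+n}b$ in $B$; properness then yields $\frac{a}{h^n} = \frac{b}{h^n}$ in $B[h^{-1}]$. For each $\fp \in U_h$, the hypothesis supplies $t_\fp \in B \setminus \fp$ with $t_\fp h^n a = t_\fp h^n b$. Set $I = \{c \in B : c h^n a = c h^n b\}$. Then $I \not\subseteq \fp$ for every $\fp \in U_h$, so the basic open sets $\{U_c\}_{c \in I}$ cover $U_h$. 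By compactness of $U_h$ (just proved), extract a finite subcover $U_{c_1}, \ldots, U_{c_r}$; taking complements in $\Spec B$ gives $V(\langle c_1, \dotsc, c_r\rangle) \subseteq V(h)$. The Nullstellensatz identity $\sqrt{J} = \bigcap_{\fp \supseteq J} \fp$, which for blueprints follows by a Zorn's lemma argument producing a prime disjoint from any specified multiplicative set, then yields $h^K \in \langle c_1, \dotsc, c_r\rangle_B$ for some $K$.

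The concluding step is to promote $h^K \in \langle c_1, \dotsc, c_r\rangle$ to $h^K \in I$. The natural route is to show $I$ is itself an ideal of $B$: axiom (I1) (absorption $IB \subseteq I$) is immediate, (I2) (containment of a zero, if $B$ has one) follows from Lemma~\ref{lemma_bp-fact} together with properness, and axiom (I3) is the technical crux. The key closure that drives (I3) is $c \in I \Rightarrow ch^n a \in I$, which one verifies by multiplying $ch^n a = ch^n b$ by $h^n a$ to obtain $(ch^n a)h^n a = ch^{2n}a^2 = ch^{2n}ab = (ch^n a)h^n b$; this allows any defining $\sim_I$-sequence $c \= \sum c_{1,l} \sim^I_\N \dotsb \= d$ to be propagated through multiplication by $h^n a$, yielding $ch^n a \sim_I dh^n a$. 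Combined with the symmetric sequence for $h^n b$ and the equality $dh^n a = dh^n b$ (since $d \in I$), one reaches $ch^n a \sim_I ch^n b$. The hard part of the proof -- where properness of $B$ must be invoked essentially -- is then to upgrade this $\sim_I$-relation to the actual monoid equality $ch^n a = ch^n b$, so that $c \in I$. Once (I3) is secured, $\langle c_1, \dotsc, c_r\rangle \subseteq I$, hence $h^K \in I$ as required.
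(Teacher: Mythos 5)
Your overall architecture (well-definedness, reduction to a common denominator, the covering of $U_h$ by the $U_{t_\fp}$, compactness, and a radical-membership step) is the Hartshorne-style route that the paper intends --- the paper's own proof is only the remark that the argument of \cite[Prop.~2.2(b)]{Hartshorne} carries over --- and those parts are fine, modulo the fact that the Nullstellensatz identity $\Rad(J)=\bigcap_{\fp\supseteq J}\fp$ is itself only asserted (it needs a Zorn argument in which an ideal maximal among those disjoint from $\{h^k\}$ is shown to be prime, along the lines of Section~\ref{section:maximal_congruences_and_maximal_ideals}). The genuine gap is exactly the step you yourself defer as ``the hard part'': upgrading $ch^na\sim_I ch^nb$ to the monoid equality $ch^na=ch^nb$. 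Properness cannot do this. Properness only turns a relation $x\=y$ between two \emph{single} terms into an equality, whereas $\sim_I$ is the congruence generated by $I$ and identifies much more. What the step really needs is cancellativity: writing the relevant relation as $eh^na+z\=w$ and $eh^nb+z\=w$ with $z,w$ built from elements of $I$, only axiom (A6) lets you cancel $z$ and then invoke properness.

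In fact your intermediate claim that $I=\{c\in B: ch^na=ch^nb\}$ is an ideal is false for general proper blueprints. Let $C$ be the quotient of the polynomial semiring over $B_1$ (Lescot's semiring $\{0,1\}$ with $1+1=1$, cf.\ Section~\ref{section:b_1-algebras}) in variables $x,y,e,u$ by the semiring congruence generated by $ux\equiv uy$ and $e+u\equiv u$. Every rewriting coming from these generators requires the presence of a monomial divisible by $u$, so the congruence class of any $u$-free monomial such as $e^kx$ is a singleton; hence $e^kx\neq e^ky$ for all $k\geq 0$. Now $u\in I=\{c: cx=cy\}$, and the relation $e+u=u$ together with property (iii) of Lemma~\ref{lemma_ideals_in_bp_with_0} would force $e\in I$ if $I$ were an ideal --- but $ex\neq ey$, so axiom (I3) fails for $I$. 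Worse, take $h=e$, $a=x$, $b=y$: any prime $\fp$ of $C$ with $e\notin\fp$ satisfies $u\notin\fp$ (apply property (iii) of Lemma~\ref{lemma_ideals_in_bp_with_0} to $\fp$ and $e+u\=u$), so $\sigma_e(x/1)$ and $\sigma_e(y/1)$ agree at every point of $U_e$, while no power of $e$ lies in $I$, i.e.\ $x/1\neq y/1$ in $C[e^{-1}]$. So in this example no argument can land $h^K$ in $I$; the difficulty you postpone is not a technicality but the precise point where the analogy with rings (where $I$ is the annihilator of $h^ma-h^nb$, an honest ideal) breaks down, and unless I misread the definitions it shows that properness alone does not suffice for the conclusion --- your proof strategy can only be completed under an additional hypothesis such as cancellativity.
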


\begin{proof}
 The proof is analogous to the case of usual schemes (cf.\ \cite[Prop\ 2.2 (b)]{Hartshorne}).
\end{proof}

\begin{pg}
 If $B$ is not proper, then $\sigma_h$ is not injective in general: for instance, the spectrum of a blueprint $B=\bpquot A\cR$ with $\cR=\N[A]\times\N[A]$ is the empty scheme whose global sections are the terminal blueprint $\bpgenquot{\{0\}}{0\=\zero}$, independently from $A$. If $A$ contains more than one element, then $\sigma_1$ is not injective. We saw also in Example \ref{ex:spec_and_global_sections} that $\sigma_1$ is in general not surjective. 

 The \emph{functor $\Gamma(\blanc,\cO)$ of global sections} associates to a blue scheme $X$ its global sections $\Gamma(X,\cO_X)=\cO_X(X)$ and to a morphism $\varphi:X\to Y$ the morphism $\varphi^\#(Y): \cO_Y(Y)\to\cO_X(X)$ between global sections. We denote the endofunctor on $\bp$ given as the composition of $\Spec$ with $\Gamma(\blanc,\cO)$ by $\Gamma$. A blueprint $B$ is called \emph{global} if it contained in the essential image of $\Gamma$, i.e.\ if there is a blueprint $C$ such that $B\simeq\Gamma C$. We call the morphism $\sigma=\sigma_1:B\to\Gamma B$ the \emph{globalization of $B$}.

 The most prominent examples of global blueprints are rings. Another class of global blue\-prints are local blueprints $B$: indeed, the only open subset of $\Spec B$ that contains the maximal ideal of $B$ is $\Spec B$ itself; thus $B=\Gamma B$. Since monoids (with a zero) are local blueprints (see \cite[Section 1.1]{LL09b} resp.\ \cite[Section 2.1.1]{CLS10}), they are global blueprints. 
\end{pg}

\begin{thm}\label{thm:global_sections_are_global_blueprints}
 Let $B$ be a blueprint and $\sigma:B\to \Gamma B$ its globalization. Then the induced morphism of spectra $\sigma^\ast:\Spec\Gamma B {\to}\Spec B$ is an isomorphism. 
\end{thm}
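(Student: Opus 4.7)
The strategy is to construct an explicit set-theoretic inverse $\tau:\Spec B\to\Spec\Gamma B$ to $\sigma^\ast$, then upgrade this to a homeomorphism by matching basic opens, and finally promote it to an isomorphism of locally blueprinted spaces by checking that the induced maps on stalks are isomorphisms.

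For each $\fp\in\Spec B$, the evaluation map $\ev_\fp:\Gamma B\to B_\fp$ sending $s\mapsto s(\fp)$ is a blueprint morphism, and the composition $\ev_\fp\circ\sigma:B\to B_\fp$ is the canonical localization. I define $\tau(\fp):=\ev_\fp^{-1}(\fp B_\fp)$, which is a prime ideal of $\Gamma B$ by Proposition \ref{prop_inverse_images}(i). The identity $\sigma^\ast\circ\tau=\id_{\Spec B}$ then follows immediately from $(\ev_\fp\circ\sigma)^{-1}(\fp B_\fp)=\fp$. For the other composition, I take $\fq\in\Spec\Gamma B$ and set $\fp:=\sigma^{-1}(\fq)$, and aim to prove $\fq=\tau(\fp)$. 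To do this I exploit the local representation $s=a/h^n$ (with $a\in B$ and $h\in B\setminus\fp$) available on a basic open neighbourhood $U_h$ of $\fp$: this gives $\ev_\fp(s)=a/h^n\in B_\fp$, so the condition $\ev_\fp(s)\in\fp B_\fp$ translates to $a\in\fp$. Combined with the local identity $\sigma(h)^n\cdot s = \sigma(a)$ on $U_h$ and the prime property of $\fq$, this should match up membership $s\in\fq$ with $\ev_\fp(s)\in\fp B_\fp$.

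For the topology, it suffices to show that $\tau$ and $\sigma^\ast$ are mutually inverse homeomorphisms on basic opens: $\sigma^\ast(U_{\sigma(h)})=U_h$ is immediate from the definition of $\sigma^\ast$, and the reverse inclusion $\tau(U_h)\subseteq U_{\sigma(h)}$ follows since $h\notin\fp$ implies $\sigma(h)(\fp)=h\notin\fp B_\fp$, hence $\sigma(h)\notin\tau(\fp)$. For the sheaf-theoretic part, I check that $\sigma$ induces an isomorphism $B_\fp\xrightarrow{\sim}(\Gamma B)_{\tau(\fp)}$ on stalks: surjectivity is easy since any $a/h\in B_\fp$ is the image of $\sigma(a)/\sigma(h)$; injectivity uses that $\ev_\fp\circ\sigma$ is the localization map, together with the definition of equality in $S^{-1}B$ and the fact that $\tau(\fp)$ is precisely the preimage of $\fp B_\fp$. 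Since the structure sheaves on both sides are determined by their values on basic opens and their stalks, the stalkwise isomorphism combined with the homeomorphism of underlying spaces yields the required isomorphism.

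The main obstacle is the inclusion $\tau(\fp)\subseteq\fq$, i.e.\ the implication that $\ev_\fp(s)\in\fp B_\fp$ forces $s\in\fq$. The subtlety is that the identity $\sigma(h)^n\cdot s=\sigma(a)$ only holds as sections on $U_h$, not globally on $\Spec B$, so one cannot directly conclude $s\in\fq$ from $\sigma(a)\in\fq$. The argument must therefore proceed locally or pass through the stalk $(\Gamma B)_\fq$, using that $s$ and $\sigma(a)/\sigma(h)^n$ agree there and that $\fq$ is recovered from its localization. This local-to-global transition—effectively showing that the ideals of $\Gamma B$ are detected stalkwise—is the technical core of the proof, and it is precisely what justifies why passing from $B$ to its globalization does not change the spectrum.
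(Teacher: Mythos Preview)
Your overall strategy is exactly the paper's: your map $\tau(\fp)=\ev_\fp^{-1}(\fp B_\fp)$ is the paper's $\sigma_\ast(\fp)$, and the paper proceeds, just as you do, by showing that $\sigma^\ast$ and $\sigma_\ast$ are mutually inverse, that $\sigma^\ast$ is a homeomorphism, and that the induced maps on stalks are isomorphisms. However, you correctly isolate the crux---the inclusion $\tau(\fp)\subseteq\fq$---and then do not actually prove it. Your proposed resolution (``pass through the stalk $(\Gamma B)_\fq$'') does not work as stated: the identity $\sigma(h)^n\cdot s=\sigma(a)$ you have is an equation in $\cO_X(U_h)$, not in $\Gamma B$, and there is no mechanism to transport it into $(\Gamma B)_\fq$ without already knowing something global about $s$.

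The missing idea is Lemma~\ref{lemma:local_representation_of_global_sections}: every $s\in\Gamma B$ admits a \emph{finite} system of local representatives $h_1,\dotsc,h_n,a_1,\dotsc,a_n$ with $X=\bigcup U_{h_i}$, $s(\fp')=a_i/h_i$ on $U_{h_i}$, and the compatibility $a_ih_j=a_jh_i$. If $\fp\in U_{h_1}$, this compatibility gives $h_1\cdot a_i/h_i=a_1$ in $B_{\fp'}$ for \emph{every} $\fp'\in U_{h_i}$ and every $i$, so $s_{h_1}\cdot s=s_{a_1}$ holds \emph{globally in $\Gamma B$}. Now the argument is immediate: since $h_1\notin\fp=\sigma^{-1}(\fq)$ we have $s_{h_1}\notin\fq$, hence $s\in\fq\iff s_{a_1}\in\fq\iff a_1\in\fp\iff s(\fp)\in\fp B_\fp$. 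This is the step your proposal gestures at but does not supply.

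The same gap recurs in your topology argument. You only match the opens $U_{\sigma(h)}\subset\Spec\Gamma B$ for $h\in B$ with the $U_h\subset\Spec B$; but the $U_{\sigma(h)}$ do not form a basis of $\Spec\Gamma B$, since $\sigma$ need not be surjective. To show $\sigma^\ast$ is open one must handle $U_s$ for arbitrary $s\in\Gamma B$, and the paper does this again via the finite local representation, obtaining $\sigma^\ast(U_s)=\bigcup_i U_{a_ih_i}$. Finally, your surjectivity claim for stalks is stated backwards (the map goes $B_\fp\to(\Gamma B)_\fq$, and one must show every $s/t\in(\Gamma B)_\fq$ is hit), and its proof again requires the local representation lemma.
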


Before we proceed to prove the theorem, we state the following immediate consequences.

\begin{cor}\label{cor:globalization_is_global}
 Every morphism from a blueprint $B$ into a global blueprint $C$ factors uniquely through the globalization $\sigma:B\to\Gamma B$. The blueprint $B$ is global if and only if $\sigma:B\to \Gamma B$ is an isomorphism. In particular, $\Gamma^2=\Gamma$ is an endofunctor on $\bp$. \qed
\end{cor}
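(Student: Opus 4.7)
The plan is to derive all three claims from Theorem \ref{thm:global_sections_are_global_blueprints} by first promoting the spectral isomorphism to an algebraic statement, and then using naturality of $\sigma$ in a reflector-style argument.

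First I would apply the global sections functor $\Gamma(\blanc,\cO)$ to the isomorphism $\sigma_B^\ast\colon\Spec\Gamma B\to\Spec B$ provided by the theorem. Since global sections is a functor on locally blueprinted spaces, its image is an isomorphism of blueprints $\Gamma(\sigma_B)\colon\Gamma B\to\Gamma\Gamma B$, natural in $B$. Next I would identify $\sigma_{\Gamma B}$ with $\Gamma(\sigma_B)$ as morphisms $\Gamma B\to\Gamma\Gamma B$. Using a local representation $s\vert_{U_h}=\sigma_{U_h}(a/h^n)$ of a section $s\in\Gamma B$ near $\fp=\sigma_B^{-1}(\fq)$, both $\sigma_{\Gamma B}(s)(\fq)=s/1$ and $\Gamma(\sigma_B)(s)(\fq)=\sigma_B(a)/\sigma_B(h)^n$ must coincide in the algebraic stalk $(\Gamma B)_\fq$; the required relation $\sigma_B(h)^n\cdot s=\sigma_B(a)$ holds as sections on $U_h$, and a compactness argument on $\Spec B$ (covering by finitely many affines $U_{h_\alpha}$ on which $s$ is represented by $a_\alpha/h_\alpha^{n_\alpha}$, with pairwise compatibility on overlaps) supplies a common $t\in\Gamma B\setminus\fq$ witnessing the equivalence in the localization.

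From these two ingredients the three assertions assemble formally. For (2), the backward direction is immediate; for the forward direction, if $B\simeq\Gamma D$ then $\sigma_B$ is identified with $\sigma_{\Gamma D}=\Gamma(\sigma_D)$, an isomorphism by the first step. The idempotence statement $\Gamma^2=\Gamma$ follows because $\Gamma B$ is tautologically global, so $\sigma_{\Gamma B}$ is an isomorphism by (2), naturally in $B$. For the universal factorization (1), given $f\colon B\to C$ with $C$ global, I would set $\bar f:=\sigma_C^{-1}\circ\Gamma(f)\colon\Gamma B\to C$, where $\sigma_C^{-1}$ exists by (2); naturality of $\sigma$ then yields $\bar f\circ\sigma_B=\sigma_C^{-1}\circ\sigma_C\circ f=f$. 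For uniqueness, given $g\circ\sigma_B=f$, applying $\Gamma$ and cancelling the isomorphism $\Gamma(\sigma_B)$ gives $\Gamma(g)=\Gamma(\bar f)$, and then the naturality identity $\sigma_C\circ g=\Gamma(g)\circ\sigma_{\Gamma B}$ combined with the isomorphisms $\sigma_{\Gamma B}$ and $\sigma_C$ forces $g=\bar f$.

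The main obstacle is the identification $\sigma_{\Gamma B}=\Gamma(\sigma_B)$. Although morally forced, the two maps are given by superficially different recipes (one via $s/1$ in the localization, one via the stalk map of $\sigma_B$), and the equality in the algebraic localization $(\Gamma B)_\fq$ demands producing a single element $t\in\Gamma B\setminus\fq$ for which the defining multiplicative relation holds in the monoid $\Gamma B$ itself, not merely on a neighborhood of $\fq$. Once this hurdle is cleared, everything else is bookkeeping with naturality squares.
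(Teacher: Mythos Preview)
Your argument is correct and is essentially the route the paper has in mind (the corollary is marked \qed\ with no proof given). The reflector-style bookkeeping in your final paragraph is exactly right: once one knows that $\sigma_{\Gamma B}$ is an isomorphism, claims (1)--(3) are formal consequences of naturality.

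The one place you overcomplicate matters is the identification $\sigma_{\Gamma B}=\Gamma(\sigma_B)$. No compactness argument or search for a witness $t$ is needed. Using the representation of Lemma~\ref{lemma:local_representation_of_global_sections}, one has $s_{h_i}\cdot s=s_{a_i}$ as an equality in $\Gamma B$ itself (not merely on $U_{h_i}$): for any $\fp'\in U_{h_j}$, $(s_{h_i}s)(\fp')=h_i\,a_j/h_j=a_i$ by the cross relations $a_ih_j=a_jh_i$. This is precisely the computation already carried out in the proof of Lemma~\ref{lemma:sigma*-is-a-bijection}. Hence if $\fp=\sigma^{-1}(\fq)\in U_{h_1}$, then in $(\Gamma B)_\fq$ one has directly $s/1=s_{a_1}/s_{h_1}=\sigma_\fq(a_1/h_1)=\sigma_\fq(s(\fp))$, which is $\Gamma(\sigma_B)(s)(\fq)$. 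So the ``main obstacle'' dissolves once you invoke that global identity.
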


Let $\bp_\gl$ be the full subcategory of $\bp$ whose objects are global blueprints and let $\bpspaces$ be the category of locally blueprinted spaces together with local morphisms.

\begin{cor}\label{cor:spec_is_fully_faithful_on_global_blueprints}
 The functor $\Spec:\bp\to\bpspaces$ restricts to a fully faithful embedding $\Spec:\bp_\gl\to\bpspaces$, and $\Gamma(\blanc,\cO):\bpspaces\to\bp_\gl$ is a left-inverse of $\Spec$.
\end{cor}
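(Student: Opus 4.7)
The plan is to deduce this corollary directly from Theorem~\ref{thm:global_sections_are_global_blueprints} and Corollary~\ref{cor:globalization_is_global}. For the left-inverse claim, given $B\in\bp_\gl$, Corollary~\ref{cor:globalization_is_global} says that the globalization $\sigma_B:B\to\Gamma B$ is an isomorphism, and naturality of $\sigma$ in $B$ identifies $\Gamma(\blanc,\cO)\circ\Spec$ with the identity functor on $\bp_\gl$. In particular, $\Gamma(\Spec B,\cO_{\Spec B})=\Gamma B\simeq B$ lies in $\bp_\gl$ whenever $B$ does, so the target category in the statement makes sense.

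To establish fully faithfulness, I would show that for $B,C\in\bp_\gl$ the assignment $f\mapsto\Spec f$ gives a bijection
\[ \Hom_{\bp}(B,C)\;\stackrel{\sim}{\longrightarrow}\;\Hom_{\bpspaces}(\Spec C,\Spec B). \]
Faithfulness is immediate: if $\Spec f=\Spec g$, then applying $\Gamma(\blanc,\cO)$ yields $\Gamma f=\Gamma g$, and precomposing with $\sigma_B^{-1}$ and postcomposing with $\sigma_C$ recovers $f=g$. For fullness, given a local morphism $\varphi:\Spec C\to\Spec B$, I would define $f:B\to C$ as the morphism corresponding, under the isomorphisms $B\simeq\Gamma B$ and $C\simeq\Gamma C$, to the global-section component $\varphi^\#(\Spec B):\Gamma B\to\Gamma C$, and then verify that $\Spec f=\varphi$.

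The verification $\Spec f=\varphi$ is the main obstacle and decomposes into two parts. For agreement on the underlying topological spaces, I would use that for every $\fq\in\Spec C$, the stalk morphism $\varphi^\#_\fq:B_{\varphi(\fq)}\to C_\fq$ is obtained from $\varphi^\#(\Spec B)=f$ by localization (by naturality of stalks with respect to restriction); the locality of $\varphi^\#_\fq$ then forces $f^{-1}(\fq)=\varphi(\fq)$, which is exactly $(\Spec f)(\fq)$. For the sheaf structure, both $\varphi^\#$ and $(\Spec f)^\#$ then induce the same stalk morphisms at every $\fq$, namely the localization of $f$; since a morphism of sheaves is determined by its induced maps on stalks (via the separation axiom), this gives $\varphi^\#=(\Spec f)^\#$.

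The trickiest point is the rigorous justification that the sheaf morphisms agree beyond the global sections; one must combine the locality of $\varphi$ at each prime with the fact that the basis $\{U_h\}_{h\in B}$ generates the topology of $\Spec B$ and that sections in $\cO_{\Spec C}$ are locally represented, so that equality of stalk-wise actions is enough to conclude equality of section-wise actions on this basis.
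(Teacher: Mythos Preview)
Your proposal is correct and follows essentially the same approach as the paper: the paper's proof establishes the left-inverse property on objects via Theorem~\ref{thm:global_sections_are_global_blueprints} and on morphisms by citing \cite[Prop.~2.3]{Hartshorne}, and what you have written is precisely a spelled-out version of that Hartshorne argument (points agree by locality of the stalk maps, then sheaf maps agree because they are determined by stalks, which in turn are determined by the global-section map via the universal property of localization). One minor remark: the paper's sentence ``if we show that the global section functor is a left-inverse of $\Spec$ \dots\ then it is clear that $\Spec$ is fully faithful'' is a bit elliptic, since a left-inverse alone only yields faithfulness; your explicit verification that $\Spec f=\varphi$ is exactly what closes the fullness gap that the paper leaves to the Hartshorne reference.
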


\begin{proof}
 If we show that the global section functor is a left-inverse of $\Spec$ restricted to $\bp_\gl$, then it is clear that $\Spec$ is fully faithful. On the level of objects, this follows from Theorem \ref{thm:global_sections_are_global_blueprints}. On the level of morphisms, it is the same argument as for usual schemes (cf.\ \cite[Prop.\ 2.3]{Hartshorne}).
\end{proof}

 The rest of this section is dedicated to the proof of Theorem \ref{thm:global_sections_are_global_blueprints}. The following technical statement is true for the same reasons as in usual scheme theory (cf.\ \cite[Prop\ 2.2 (b)]{Hartshorne}).

\begin{lemma}\label{lemma:local_representation_of_global_sections}
 Let $h\in B$, $X=\Spec B$ and $s\in\cO_X(U_h)$. Then there are finitely many elements $h_1,\dotsc,h_n, a_1,\dotsc, a_n\in B$ that satisfy the following properties:
 \begin{itemize}
  \item $U_h=\bigcup_{i=1}^n U_{h_i}$;
  \item $s(\fp)=\frac{a_i}{h_i}$ in $B_\fp$ for all $\fp\in U_{h_i}$ and all $i=1,\dotsc,n$;
  \item $a_ih_j=a_jh_i$ for all $i,j=1,\dotsc,n$. \hfill\qed
 \end{itemize}
\end{lemma}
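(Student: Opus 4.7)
The strategy mirrors Hartshorne's classical argument. By the definition of the structure sheaf, for each $\fp \in U_h$ there exist $h_\fp \in B - \fp$, $a_\fp \in B$ and $n_\fp \in \N$ with $U_{h_\fp}\subset U_h$ and $s(\fq) = a_\fp/h_\fp^{n_\fp}$ for every $\fq \in U_{h_\fp}$. Since $U_{h^n} = U_h$ for every $n\geq 1$, I first replace $h_\fp$ by $h_\fp^{n_\fp}$ while leaving $a_\fp$ unchanged; this brings the exponent down to $1$ without altering the open set on which the representation lives. The compactness lemma proved above now lets me pass to a finite subcover $U_h = U_{h_1} \cup \dotsb \cup U_{h_n}$ with $s(\fq) = a_i/h_i$ on $U_{h_i}$, which establishes the first two claims.

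For the compatibility $a_i h_j = a_j h_i$, observe that on the overlap $U_{h_i} \cap U_{h_j} = U_{h_i h_j}$ both $a_i/h_i$ and $a_j/h_j$ represent the same section $s|_{U_{h_i h_j}}$, hence agree in the stalk $B_\fq$ for every $\fq \in U_{h_i h_j}$. Separatedness of $\cO_X$ gives an injection $\Gamma(U_{h_i h_j}, \cO_X) \hookrightarrow \prod_{\fq \in U_{h_i h_j}} B_\fq$, and composing with the injection $\sigma_{h_i h_j}\colon B[(h_i h_j)^{-1}] \hookrightarrow \Gamma(U_{h_i h_j}, \cO_X)$ furnished by Lemma \ref{lemma:blpr_injects_into_sections} yields the equality $a_i h_j/(h_i h_j) = a_j h_i/(h_i h_j)$ already in the localization $B[(h_i h_j)^{-1}]$. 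Unwinding the equivalence defining the localization in Section \ref{section:localizations}, there exists $M_{ij}\in\N$ with
\[
 (h_i h_j)^{M_{ij}}\, h_j a_i \ = \ (h_i h_j)^{M_{ij}}\, h_i a_j
\]
as a genuine equation in $B$.

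Setting $M = \max_{i,j} M_{ij}$ (a finite maximum, as there are only finitely many pairs), I replace each $h_i$ by $h_i^{M+1}$ and each $a_i$ by $a_i h_i^M$. Then $U_{h_i^{M+1}}$ coincides with $U_{h_i}$, so the cover is unchanged, and $a_i h_i^M/h_i^{M+1} = a_i/h_i$ in every stalk $B_\fq$ with $h_i\notin\fq$, so the local presentation of $s$ is preserved. The required compatibility becomes $(a_i h_i^M)(h_j^{M+1}) = (a_j h_j^M)(h_i^{M+1})$, which rearranges to $(h_i h_j)^M\, h_j a_i = (h_i h_j)^M\, h_i a_j$ — exactly the identity just secured. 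I expect the main obstacle to be the middle paragraph: promoting pointwise agreement in the stalks to an equality in $B$ rests on Lemma \ref{lemma:blpr_injects_into_sections} (and hence on $B$ being proper), whereas the classical ring argument invokes the annihilator ideal of $h_j a_i - h_i a_j$ — a device unavailable in a blueprint because subtraction is missing, and which must be replaced by the sheaf-theoretic injection just described.
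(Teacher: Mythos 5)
Your argument is exactly the classical Hartshorne II.2.2(b) proof transported to blueprints (quasi-compactness of $U_h$, agreement on overlaps $U_{h_ih_j}$ promoted to an equality in $B[(h_ih_j)^{-1}]$ via Lemma \ref{lemma:blpr_injects_into_sections}, then the re-indexing $h_i\mapsto h_i^{M+1}$, $a_i\mapsto a_ih_i^M$), which is precisely the proof the paper intends, since it offers nothing beyond the citation ``for the same reasons as in usual scheme theory''. The only caveat --- that the injectivity step imports the properness hypothesis of Lemma \ref{lemma:blpr_injects_into_sections}, which the statement of the lemma does not carry --- is one you flag yourself, and it is inherited from the paper rather than introduced by your argument.
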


Let $X=\Spec B$. Recall that $\sigma: B\to\Gamma B$ maps $a\in B$ to the section $s_a:X\to\coprod_{\fp\in X}B_\fp$ with $\sigma(\fp)=\frac a1$ in $B_\fp$. The morphism $\sigma^\ast:\Spec \Gamma B\to \Spec B$ maps a prime ideal $\fq$ of $\Gamma B=\cO_X(X)$ to $\fp=\sigma^{-1}(\fq)$. We will prove in the following that the map 
\[
 \begin{array}{cccl}
  \sigma_\ast: & \Spec B & \longrightarrow & \Spec\Gamma B \\
               &  \fp    & \longmapsto     & \sigma_\ast(\fp) \ = \ \{ \ s\in\Gamma B \ | \ s(\fp)\in\fp B_{\fp} \ \}
 \end{array}
\]
 is an inverse of $\sigma^\ast$. It is easily verified that $\sigma_\ast(\fp)$ is indeed a prime ideal of $\Gamma B$. Clearly, we have $\sigma^{-1}(\sigma_\ast(\fp))=\fp$ for every prime ideal $\fp$ of $B$.

\begin{lemma}\label{lemma:sigma*-is-a-bijection}
 The maps $\sigma^\ast$ and $\sigma_\ast$ are mutually inverse bijections.
\end{lemma}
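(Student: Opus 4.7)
The plan is to establish the two composition identities separately. The identity $\sigma^\ast \circ \sigma_\ast = \id_{\Spec B}$ is already noted in the paragraph preceding the lemma, since $\sigma^{-1}(\sigma_\ast(\fp)) = \fp$ by direct inspection. So all the work lies in showing $\sigma_\ast \circ \sigma^\ast = \id_{\Spec \Gamma B}$: given a prime ideal $\fq \subset \Gamma B$, letting $\fp = \sigma^{-1}(\fq)$, we must prove that a section $s \in \Gamma B$ satisfies $s(\fp) \in \fp B_\fp$ if and only if $s \in \fq$.

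The tool I would use is Lemma \ref{lemma:local_representation_of_global_sections}: pick a finite cover $U_h = \bigcup_{i=1}^n U_{h_i}$ (with $h = 1$, so we cover all of $X$) and corresponding elements $a_i \in B$ such that $s(\fr) = a_i / h_i$ in $B_\fr$ for every $\fr \in U_{h_i}$, with the compatibility relations $a_i h_j = a_j h_i$ in $B$ for all $i,j$. The key algebraic identity I would prove first, and which I expect to be the main technical point, is the equality
\[
 s \cdot \sigma(h_i) \ = \ \sigma(a_i) \quad \textup{in }\Gamma B
\]
for each $i$. This is an equality of global sections, so it must be verified on every $B_\fr$; on $U_{h_j}$ one computes $s(\fr) \cdot h_i/1 = a_j h_i / h_j = a_i h_j / h_j = a_i/1 = \sigma(a_i)(\fr)$ using the compatibility relation and the fact that $h_j$ is a unit in $B_\fr$. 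The slightly delicate point is that the comparison has to happen on all of $X$, not merely on $U_{h_i}$, and it is precisely the coherence relations $a_i h_j = a_j h_i$ that make this work.

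With this identity in hand, the rest is short. For a given section $s$, choose any index $i$ with $\fp \in U_{h_i}$, so that $h_i \notin \fp$ and hence $\sigma(h_i) \notin \fq$. Then $s(\fp) = a_i/h_i$ in $B_\fp$, and since $h_i$ is invertible there, $s(\fp) \in \fp B_\fp$ if and only if $a_i \in \fp$, which in turn is equivalent to $\sigma(a_i) \in \fq$. By the key identity, $\sigma(a_i) \in \fq$ is the same as $s \cdot \sigma(h_i) \in \fq$; since $\sigma(h_i) \notin \fq$ and $\Gamma B \setminus \fq$ is multiplicative, this happens if and only if $s \in \fq$. Chaining these equivalences yields $s \in \sigma_\ast(\fp) \Leftrightarrow s \in \fq$, so $\sigma_\ast(\sigma^\ast(\fq)) = \fq$, completing the proof.
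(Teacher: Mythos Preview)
Your proof is correct and follows essentially the same approach as the paper's: both use Lemma \ref{lemma:local_representation_of_global_sections} to obtain the local data $(h_i,a_i)$, establish the key global identity $s\cdot\sigma(h_i)=\sigma(a_i)$ via the coherence relations $a_ih_j=a_jh_i$, and then use primality of $\fq$ together with $\sigma(h_i)\notin\fq$ to conclude. The only cosmetic difference is that the paper frames the argument as showing $\sigma^\ast$ is injective (any $\fq'$ over $\fp$ equals $\sigma_\ast(\fp)$), whereas you verify $\sigma_\ast\circ\sigma^\ast=\id$ directly; the computations are identical.
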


\begin{proof}
 Since $\sigma^{-1}(\sigma_\ast(\fp))=\fp$, the map $\sigma^\ast$ is surjective. We will show that $\fq=\sigma_\ast(\fp)$ is the only prime ideal of $\Gamma B$ that satisfies $\sigma^{-1}(\fq)=\fp$. This implies that $\sigma^\ast$ is injective and consequently bijective. The equality $\sigma^{-1}(\sigma_\ast(\fp))=\fp$ proves in turn that $\sigma_\ast$ is the inverse of $\sigma^\ast$.

 Let $s\in \Gamma B$, i.e.\ there are $h_1,\dotsc,h_n, a_1,\dotsc, a_n\in B$ such that $X=\bigcup_{i=1}^n U_{h_i}$ and such that $s(\fp')=\frac{a_i}{h_i}$ in $B_{\fp'}$ and $a_ih_j=a_jh_i$ for all $\fp'\in U_{h_i}$ and for all $i,j=1,\dotsc,n$ (see Lemma \ref{lemma:local_representation_of_global_sections}). We may assume that $\fp\in U_{h_1}$, i.e.\ $h_1\notin\fp$. Then $(s_{h_1} s)(\fp')=h_1\frac{a_i}{h_i}$ in $B(\fp')$ for all $\fp'\in U_{h_i}$. But $h_1\frac{a_i}{h_i}=a_1$ since $a_1h_i=a_ih_1$, and thus $s_{h_1} s=s_{a_1}=\sigma(a_1)$.

 If $\fq'$ is a prime ideal of $\Gamma B$ with $\sigma^{-1}(\fq')=\fp$, then $s\in\fq'$ if and only if $s_{h_1} s\in\fq'$ since $h_1\notin \fp$ by assumption and consequently $s_{h_1}\notin\fq'$. Since $\sigma^{-1}(\fq')=\fp$, the condition $s_{h_1} s\in\fq'$ is in turn equivalent to $a_1=s_{a_1}(\fp)=(s_{h_1}s)(\fp)\in\fp B_\fp$, or, again since $h_1\notin \fp$, to $\frac{a_1}{h_1}=s(\fp)\in\fp B_\fp$. This shows that $s\in\fq'$ if and only if $s\in\sigma_\ast(\fp)$. Therefore $\fq=\sigma_\ast(\fp)$ is the only prime ideal of $\Gamma B$ that satisfies $\sigma^{-1}(\fq)=\fp$, and the lemma is proven.
\end{proof}

\begin{lemma}
 Let $X=\Spec B$, $Y=\Spec\Gamma B$ and $\varphi=\sigma^\ast: Y\to X$.
 \begin{enumerate}
  \item If $h\in B$, then $\varphi^{-1}(U_h)=U_{s_h}$.
  \item For all $s\in \Gamma B$, there are $h_1,\dotsc,h_n$ such that $\varphi(U_s)=\bigcup_{i=1}^n U_{h_i}$.
 \end{enumerate}
 Consequently, $\varphi$ is a homeomorphism.
\end{lemma}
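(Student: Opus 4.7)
The plan is to derive the two set-theoretic identities (i) and (ii) directly from the definitions and then deduce the homeomorphism claim formally, using that $\varphi$ is already known to be a bijection by Lemma \ref{lemma:sigma*-is-a-bijection}.

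Part (i) should follow by an immediate unwinding of definitions. For a prime ideal $\fq$ of $\Gamma B$, the condition $\varphi(\fq)\in U_h$ means $h\notin\sigma^{-1}(\fq)$, which by the definition of $\sigma$ is equivalent to $s_h=\sigma(h)\notin\fq$, i.e.\ $\fq\in U_{s_h}$. I would present this as a single chain of equivalences, nothing more being required.

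For part (ii), my plan is to apply Lemma \ref{lemma:local_representation_of_global_sections} to obtain finitely many $h_1,\dotsc,h_n,a_1,\dotsc,a_n\in B$ with $X=\bigcup_{i=1}^n U_{h_i}$, with $s(\fp)=a_i/h_i$ in $B_\fp$ for $\fp\in U_{h_i}$, and with the compatibility $a_ih_j=a_jh_i$. Then I would compute, for $\fp\in X$: we have $\fp\in\varphi(U_s)$ iff $\sigma_\ast(\fp)\in U_s$ (by Lemma \ref{lemma:sigma*-is-a-bijection}), iff $s(\fp)\notin\fp B_\fp$ by the definition of $\sigma_\ast$. Picking any $i$ with $\fp\in U_{h_i}$, the element $h_i$ is a unit in $B_\fp$, so the last condition reduces to $a_i\notin\fp$, i.e.\ $\fp\in U_{h_i}\cap U_{a_i}=U_{h_ia_i}$. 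Hence $\varphi(U_s)=\bigcup_{i=1}^n U_{h_ia_i}$, which yields the required expression with generators in $B$.

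For the final conclusion, the bijectivity of $\varphi$ is given, part (i) shows that preimages of a subbasis of $X$ are open in $Y$, so $\varphi$ is continuous, and part (ii) shows that images of the basic opens $U_s$ (which form a basis of $Y$) are open in $X$, so $\varphi$ is an open map. Combined, $\varphi$ is a homeomorphism. I do not expect a real obstacle in this proof; the only genuinely substantive calculation is the local-to-global manipulation in (ii), which is entirely mechanical once Lemma \ref{lemma:local_representation_of_global_sections} has been invoked and the inverse description of $\varphi$ via $\sigma_\ast$ is used.
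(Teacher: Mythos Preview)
Your proposal is correct and follows essentially the same route as the paper: part (i) is the same one-line unwinding, and for part (ii) both you and the paper invoke Lemma \ref{lemma:local_representation_of_global_sections} and arrive at $\varphi(U_s)=\bigcup_{i=1}^n U_{a_ih_i}$. The only cosmetic difference is that the paper takes continuity of $\varphi=\sigma^\ast$ for granted (as it is induced by a blueprint morphism) and uses only (ii) to conclude openness, whereas you use (i) explicitly to verify continuity; both are fine.
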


\begin{proof}
 We know from Lemma \ref{lemma:sigma*-is-a-bijection} that $\varphi$ is a continuous bijection. Part \eqref{part2} implies that $\varphi$ is an open map. Thus it is clear that $\varphi$ is homeomorphism once \eqref{part2} is proven. Part \eqref{part1} follows from
 \[
  \varphi^{-1}(U_h) \ = \ \{ \ \fq\in Y \ | \ h\notin\sigma^{-1}(\fq) \ \} \ = \ \{ \ \fq\in Y \ | \ s_h\notin\fq \ \} \ = \ U_{s_h}.
 \]
 
 We prove part \eqref{part2}. Let $s\in\Gamma B$, i.e.\ there are $h_1,\dotsc,h_n, a_1,\dotsc, a_n\in B$ such that $X=\bigcup_{i=1}^n U_{h_i}$ and such that $s(\fp)=\frac{a_i}{h_i}$ in $B_{\fp}$ and $a_ih_j=a_jh_i$ for all $\fp\in U_{h_i}$ and for all $i,j=1,\dotsc,n$ (see Lemma \ref{lemma:local_representation_of_global_sections}). Then
 \[
  U_s \ = \ \{ \ \fq\in Y \ | \ s\notin \fq \ \} \ = \ \{ \ \fq\in Y \ | \ s(\fp)\notin\fp B_\fp\text{ where }\fp=\varphi(\fq) \ \}.
 \]
 For $i=1,\dotsc,n$, we have thus $\fq\subset U_{s_{h_i}}\cap U_s$ if and only if $h_i\notin\fp$ and $\frac{a_i}{h_i}=s(\fp)\notin\fp B_\fp$ where $\fp=\varphi(\fq)$, or, equivalently, $a_ih_i\notin\fp$. Therefore $\varphi(U_{s_{h_i}}\cap U_s)=U_{a_ih_i}$ and $\varphi(U_s)=\bigcup_{i=1}^nU_{a_ih_i}$.
\end{proof}

\begin{lemma}\label{lemma:isomorphic_stalks_for_sigma*}
 Let $X=\Spec B$, $Y=\Spec\Gamma B$ and $\varphi=\sigma^\ast: Y\to X$. Let $\fq\in Y$ and $\fp=\varphi(\fq)$. Then the induced morphism of stalks $\varphi_\fq: \cO_{X,\fp}\to\cO_{Y,\fq}$ is an isomorphism.
\end{lemma}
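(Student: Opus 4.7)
The strategy is to identify $\varphi_\fq$ with a concrete map and exhibit an explicit inverse. Under the identifications $\cO_{X,\fp}=B_\fp$ and $\cO_{Y,\fq}=(\Gamma B)_\fq$, the morphism $\varphi_\fq$ is the map $\iota:B_\fp\to(\Gamma B)_\fq$ induced by $\sigma:B\to\Gamma B$, sending $\tfrac{a}{h}$ to $\tfrac{\sigma(a)}{\sigma(h)}$. This is well-defined because the identity $\fp=\sigma^{-1}(\fq)$, established in Lemma \ref{lemma:sigma*-is-a-bijection}, ensures that $\sigma$ carries $B\setminus\fp$ into $\Gamma B\setminus\fq$.

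For the inverse, I use the evaluation morphism $\mathrm{ev}_\fp:\Gamma B\to B_\fp$ sending $s\mapsto s(\fp)$; this is a blueprint morphism because pre-additions in $\Gamma B$ are detected pointwise on stalks, and $\mathrm{ev}_\fp\circ\sigma$ agrees with the canonical localization map $B\to B_\fp$. For any $t\in\Gamma B\setminus\fq=\Gamma B\setminus\sigma_\ast(\fp)$, the very definition of $\sigma_\ast$ gives $t(\fp)\notin\fp B_\fp$; since $\fp B_\fp$ is the maximal ideal of the local blueprint $B_\fp$ and an element $\tfrac{a}{h}\in B_\fp$ is a unit precisely when $a\notin\fp$, the image $t(\fp)$ is a unit of $B_\fp$. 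By the universal property of the localization $(\Gamma B)_\fq$, the morphism $\mathrm{ev}_\fp$ factors as a blueprint morphism $\widetilde{\mathrm{ev}}:(\Gamma B)_\fq\to B_\fp$ that sends $\tfrac{s}{t}$ to $s(\fp)\cdot t(\fp)^{-1}$.

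It remains to verify that $\iota$ and $\widetilde{\mathrm{ev}}$ are mutually inverse. The composition $\widetilde{\mathrm{ev}}\circ\iota$ is the identity on $B_\fp$, since $\sigma(a)(\fp)=\tfrac{a}{1}$ in $B_\fp$ for every $a\in B$. The reverse composition requires more: given $\tfrac{s}{t}\in(\Gamma B)_\fq$, I need $\iota(\widetilde{\mathrm{ev}}(\tfrac{s}{t}))=\tfrac{s}{t}$. The key input is Lemma \ref{lemma:local_representation_of_global_sections}, which, combined with the algebraic manipulation already carried out in the proof of Lemma \ref{lemma:sigma*-is-a-bijection}, yields for any section $s\in\Gamma B$ and any point $\fp\in X$ elements $a\in B$ and $h\in B\setminus\fp$ with $\sigma(h)\cdot s=\sigma(a)$ in $\Gamma B$ and $s(\fp)=\tfrac{a}{h}$ in $B_\fp$. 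Applying this to $t\in\Gamma B\setminus\fq$ produces $a',h'\in B\setminus\fp$ with $\sigma(h')\cdot t=\sigma(a')$ and $t(\fp)=\tfrac{a'}{h'}$, where $a'\notin\fp$ because $t(\fp)$ is a unit. Hence $\tfrac{s}{t}=\tfrac{\sigma(ah')}{\sigma(ha')}$ in $(\Gamma B)_\fq$; its image under $\widetilde{\mathrm{ev}}$ is $\tfrac{ah'}{ha'}\in B_\fp$, which $\iota$ sends back to $\tfrac{\sigma(ah')}{\sigma(ha')}=\tfrac{s}{t}$.

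Mutually inverse blueprint morphisms are blueprint isomorphisms, proving the lemma. I expect the main technical point to be exploiting the compatibility relations $a_ih_j=a_jh_i$ in Lemma \ref{lemma:local_representation_of_global_sections} in order to upgrade the local stalk identification $s(\fp)=\tfrac{a}{h}$ to a global identity $\sigma(h)\cdot s=\sigma(a)$ in $\Gamma B$; without this one cannot represent an arbitrary element of $(\Gamma B)_\fq$ as a fraction with numerator and denominator in the image of $\sigma$, and the surjectivity side of the inverse argument collapses.
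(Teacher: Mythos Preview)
Your proof is correct, but it takes a genuinely different route from the paper's. The paper argues injectivity and surjectivity of $\varphi_\fq$ separately: for injectivity it uses the previously established homeomorphism $\varphi:Y\to X$ to transport an equality of germs in $\cO_{Y,\fq}$ back to $\cO_{X,\fp}$; for surjectivity it unwinds the definition of a germ in $\cO_{Y,\fq}$ through two layers of local representation to exhibit it as $s_{ag}/s_{bh}$. You instead construct an explicit two-sided inverse via the evaluation morphism $\mathrm{ev}_\fp:\Gamma B\to B_\fp$ and the universal property of localization, then verify the two compositions are identities. Both arguments ultimately hinge on the same identity $\sigma(h)\cdot s=\sigma(a)$ extracted from Lemma~\ref{lemma:local_representation_of_global_sections} (via the computation in the proof of Lemma~\ref{lemma:sigma*-is-a-bijection}). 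Your approach buys a cleaner, more self-contained argument that does not invoke the homeomorphism lemma and produces the inverse explicitly; the paper's approach is terser but leans more heavily on the surrounding lemmas.
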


\begin{proof}
 Note that $\cO_{X,\fp}\simeq B_\fp$ and $\cO_{Y,\fq}\simeq(\Gamma B)_\fq$. With these identifications, $\varphi_\fq: \cO_{X,\fp}\to\cO_{Y,\fq}$ is nothing else than the localization $\sigma_\fq:B_\fp\to (\Gamma B)_\fq$ of $\sigma$ that maps ${a}/{h}$ to ${s_a}/{s_h}$. 

 We show that $\varphi_\fq$ is injective. Assume ${s_a}/{s_h}={s_{a'}}/{s_{h'}}$ in $\cO_{Y,\fq}$. Then there is an open neighbourhood $U$ of $\fq$ such that ${s_a}/{s_h}={s_{a'}}/{s_{h'}}$ in $\cO_Y(U)$. Applying $\sigma^{-1}$ yields that $a/h=a'/h'$ in $\cO_X(\varphi(U))$, and thus $a/h=a'/h'$ in $\cO_{X,\fp}$.

 We show that $\varphi_\fq$ is surjective. Let $s\in\cO_{Y,\fq}$. Then there is an open neighbourhood $U$ of $\fq$ such that $s\in \cO_Y(U)$ and $s(\fq')=r(\fp')/t(\fp')$ for all $\fq'\in U$ where $\fp'=\varphi(\fq')$,$r\in\Gamma B$ and $t\in(\Gamma B-\fq)$. The sections $r$ and $t$ are in turn locally represented by elements of $B$, i.e.\ there are $a,b\in B$ and $g,h\in(B-\fp)$ such that $r(\fp')=a/h$ and $t(\fp')=b/g$ for all $\fp$ in a suitably small neighbourhood $V$ of $\fp$ with $\varphi^{-1}(V)\subset U$. This means that $s=(s_a/s_h)/(s_b/s_g)=s_{ag}/s_{bh}$ in $\varphi^{-1}(V)$. Thus $s=\varphi_\fq(ag/bh)$.
\end{proof}

We have shown that $\sigma^\ast:\Spec \Gamma B\to\Spec B$ is a homeomorphism that induces isomorphisms on stalks. Therefore $\sigma^\ast$ is an isomorphism of blue schemes. This completes the proof of Theorem \ref{thm:global_sections_are_global_blueprints}.\qed


\subsection{Affine open subschemes}
\label{section:affine_open_subschemes}

Recall that an affine open subscheme of a blue scheme is an open subset that is isomorphic to the spectrum of a blueprint. In this section, we show that the affine open subschemes of a blue scheme form a basis for its topology. Consequently, we will see that every morphism of blue schemes is locally algebraic, i.e.\ induced by blueprint morphisms. 

Let $B$ be a blueprint and $X=\Spec B$. For a multiplicative subset $S$ of $B$, we denote the canonical morphism that sends $a\in B$ to $\frac a1\in S^{-1}B$ by $\iota:B\to S^{-1}B$. The induced morphism $\iota^\ast: \Spec S^{-1}B\to X$ of blue schemes maps a prime ideal $\fq$ of $S^{-1}B$ to the prime ideal $\fp=\iota^{-1}(\fq)$ of $B$, which is a prime ideal that has empty intersection with $S$. Thus the image of $\iota^\ast$ is contained in $U_S=\{\fp\in X|\fp\cap S=\emptyset\}$. Indeed, $U_S$ equals the image of $\iota^\ast$ as the following lemma shows.

\begin{lemma}\label{lemma:prime_ideals_of_localizations}
 Let $S$ be a multiplicative subset of $B$ and $\iota:B\to S^{-1}B$ the canonical morphism. Then the induced map $\iota^\ast:\Spec S^{-1}B\to U_S$ is a bijection.
\end{lemma}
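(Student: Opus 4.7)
The plan is to exhibit an explicit inverse to $\iota^\ast$ on $U_S$. Given a prime ideal $\fp$ of $B$ disjoint from $S$, I will define
\[
 S^{-1}\fp \ = \ \Bigl\{\ \tfrac{a}{s} \in S^{-1}B \ \Bigl|\ a \in \fp,\ s \in S\ \Bigr\}
\]
and show that $\fp \mapsto S^{-1}\fp$ gives a two-sided inverse of $\iota^\ast$.

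First I would verify that $S^{-1}\fp$ is a prime ideal of $S^{-1}B$. The key observation is that every element of $S^{-1}B$ has the shape $a/s$ and that $s/1$ is a unit in $S^{-1}B$ with inverse $1/s$, so $a/s \in S^{-1}\fp$ if and only if $a/1 \in S^{-1}\fp$. The ideal axioms (I1) and (I2) follow immediately from the corresponding properties of $\fp$. For axiom (I3), given a sequence realizing $\tfrac{a}{s} \sim^{S^{-1}\fp}_\cR \tfrac{b}{t}$ in $S^{-1}B$ with $b \in \fp$ (after multiplying by units, we may assume the right-hand side is $b/1$ and the left is $a/1$), clearing denominators using the definition of $S^{-1}\cR$ produces a sequence in $B$ witnessing $a' \sim^\fp_\cR b'$ for some $a' = ta$ with $t \in S$; axiom (I3) for $\fp$ then puts $a'$ in $\fp$, whence $a/1 \in S^{-1}\fp$ since $t \notin \fp$. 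Primality is straightforward: if $\tfrac{a}{s}\tfrac{b}{r} \in S^{-1}\fp$, then (using that $s,r$ become units) $ab/1 \in S^{-1}\fp$, and by the unit argument again together with the primality of $\fp$ we conclude $a/s \in S^{-1}\fp$ or $b/r \in S^{-1}\fp$.

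Next I would check the two composition identities. For $\iota^\ast \circ (S^{-1}(-)) = \id_{U_S}$, note $\iota^{-1}(S^{-1}\fp) = \{a \in B \mid a/1 \in S^{-1}\fp\}$; the inclusion $\fp \subset \iota^{-1}(S^{-1}\fp)$ is trivial, and conversely if $a/1 \in S^{-1}\fp$, the unit trick above produces $t \in S$ with $ta \in \fp$, and since $\fp \cap S = \emptyset$ and $\fp$ is prime, $a \in \fp$. For $(S^{-1}(-)) \circ \iota^\ast = \id_{\Spec S^{-1}B}$, given a prime $\fq$ of $S^{-1}B$, set $\fp = \iota^{-1}(\fq)$; the inclusion $S^{-1}\fp \subset \fq$ is immediate, and conversely any $\tfrac{a}{s} \in \fq$ satisfies $a/1 = (s/1) \cdot (a/s) \in \fq$, so $a \in \fp$ and $a/s \in S^{-1}\fp$.

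The main obstacle is the verification of axiom (I3) for $S^{-1}\fp$, because one has to translate a chain of $\sim^{S^{-1}\fp}_\N$-relations in $S^{-1}B$ back into a chain in $B$ that witnesses membership in $\fp$. This requires inserting common denominators at each step using the explicit description of the pre-addition $S^{-1}\cR$ given in Section \ref{section:localizations}, and then applying axiom (I3) for $\fp$. Everything else is a straightforward manipulation of fractions using the fact that elements of $S$ are units in $S^{-1}B$.
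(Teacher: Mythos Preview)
Your proposal is correct and follows essentially the same route as the paper: both construct the inverse map $\fp\mapsto S^{-1}\fp=\fp\,S^{-1}B$, verify it lands in $\Spec S^{-1}B$ (with the only nontrivial step being axiom \textup{(I3)}, handled by clearing denominators to produce a sequence in $B$ and invoking \textup{(I3)} for $\fp$), and then check that both compositions with $\iota^\ast$ are the identity. The only quibble is that your parenthetical ``after multiplying by units, we may assume the endpoints are $a/1$ and $b/1$'' is unnecessary and slightly imprecise---you cannot simply adjust the endpoints of the chain without touching the intermediate terms---but your subsequent ``clearing denominators'' step is exactly what is needed and subsumes this reduction anyway.
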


\begin{proof}
 Let $Y=\Spec S^{-1}B$ and $U=U_S$. We will show that the map $\iota_\ast: U\to Y$ that maps $\fp$ to $\iota_\ast(\fp)=\fp S^{-1}B$ is an inverse of $\iota^\ast$. We make regular use of the fact that for a prime ideal $\fp'$ of a blueprint $B'$ and a unit $s\in B^\times$, an element $a\in B$ is in $\fp'$ if and only if $sa$ is in $\fp'$.
 
 We show that $\iota_\ast$ is well-defined, i.e.\ that $\fp S^{-1}B$ is a prime ideal of $S^{-1}B$. Let $\fp$ be a prime ideal of $B$ such that $\fp\cap S=\emptyset$. We verify the axioms of a prime ideal for $\fq=\fp S^{-1}B$. If $a/s\in\fq$ and $b/t\in S^{-1}B$, then $a\in\fp$ and thus $(a/s)\cdot(b/t)=ab/st\in\fp S^{-1}B=\fq$. If $S^{-1}$ has a zero $e/s\=\zero$, then $e\=\zero$ in $B$ and thus $e\in\fp$. Consequently $e/s\in\fq$. Given a sequence
 \[
  \frac a{s_a} \ = \ \sum \frac{c_{1,k}}{s_{c_{1,k}}} \ \sim^\fq_\N \quad \dotsb \quad \sim ^\fq_\N \sum \frac{d_{n,k}}{s_{d_{n,k}}} \ = \ \frac b{s_b}
 \]
 in $S^{-1}B$ where $b/s_b\in\fq$, then we can multiply the sequence with a common denominator $s\in S$ and obtain the sequence
 \[
  a{s^a} \ = \ \sum {c_{1,k}}{s^{c_{1,k}}} \ \sim^\fp_\N \quad \dotsb \quad \sim ^\fp_\N \sum {d_{n,k}}{s^{d_{n,k}}} \ = \ b{s^b}
 \]
 in $B$ where the $s^x=s/s_x$ are elements of $S$ and $bs^b\in \fp$. Consequently, $as^a\in\fp$ and $a/s_a\in\fq$. This shows that $\fq$ is an ideal. It is prime since $(a/s)\cdot(b/t)\in\fq$ if and only if $ab\in\fp$, which implies $a\in\fp$ or $b\in\fp$ and, consequently, $a/s\in\fq$ or $b/t\in\fq$.

 We show that $\iota^{-1}(\iota_\ast(\fp))=\fp$ for every $\fp\in U$. Let $\fq=\iota_\ast(\fp)$, then 
 \[
  \iota^{-1}(\fq) \ = \ \{ \ a\in B \ | \ \frac a1\in\fq \ \} \ = \ \{ \ a\in B \ | \ a\in\fp \ \} \ = \ \fp.
 \]
 We show that $\iota_\ast(\iota^{-1}(\fq))=\fq$ for every $\fq\in Y$. Let $\fp=\iota^{-1}(\fq)$, then
 \[
  \iota_\ast(\fp) \ = \ \{ \ \frac as\in S^{-1}B \ | \ \frac as=\frac{a'}{1}\frac{s}{t}\text{ with }a'\in\fp \ \} \ = \ \{ \ \frac as \in S^{-1}B \ | \ a\in\fp \ \} \ = \ \fq.
 \]
 This proves the lemma.
\end{proof}

Let $X=\Spec B$. Every open subset $U$ of $X$ inherits a blue scheme structure from $X$ by restricting the topology and the structure sheaf of $X$ to $U$, i.e.\ $\cO_U(V)=\cO_X(V)$ for all open subsets $V$ of $U$.

\begin{thm}\label{thm:basis_opens_are_affine}
 Let $X=\Spec B$ and $h\in B$. Then the canonical morphism $\iota: B\to B[h^{-1}]$ induces an isomorphism $\iota^\ast:\Spec B[h^{-1}]\to U_h$. 
\end{thm}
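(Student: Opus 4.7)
My plan is to exhibit $\iota^\ast$ as a homeomorphism onto $U_h$ that induces isomorphisms on all stalks; since blue schemes are locally blueprinted spaces, this is equivalent to $\iota^\ast$ being an isomorphism in $\BSch$. Applying Lemma \ref{lemma:prime_ideals_of_localizations} to the multiplicative subset $S=\{h^n\}_{n\geq 0}$ of $B$, and noting that the complement of a prime ideal is multiplicative (so $\fp\cap S=\emptyset$ is equivalent to $h\notin\fp$), I would conclude that $\iota^\ast$ yields a bijection from $\Spec B[h^{-1}]$ onto $U_S=U_h$, with inverse $\iota_\ast$ given by $\fp\mapsto\fp B[h^{-1}]$.

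To promote the bijection to a homeomorphism, I would examine the image of basis opens. A generic element of $B[h^{-1}]$ has the shape $a/h^n$, and from the explicit description of $\iota_\ast(\fp)$ in the proof of Lemma \ref{lemma:prime_ideals_of_localizations} one checks that $a/h^n\notin\iota_\ast(\fp)$ iff $a\notin\fp$. Hence $\iota^\ast(U_{a/h^n})=U_a\cap U_h$, which is open in $U_h$. Together with the continuity built into $\iota^\ast$ as the spectrum map of a blueprint morphism, this shows that $\iota^\ast$ is a homeomorphism onto $U_h$ equipped with its subspace topology.

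For the stalks, I would identify $\cO_{X,\fp}\simeq B_\fp$ and $\cO_{\Spec B[h^{-1}],\fq}\simeq(B[h^{-1}])_\fq$ for any $\fq\in\Spec B[h^{-1}]$ with $\fp=\iota^{-1}(\fq)\in U_h$. Since $S\subseteq B\setminus\fp$, both blueprints satisfy the same universal property, namely being initial among blueprints under $B$ in which every element of $B\setminus\fp$ becomes a unit; this provides a canonical isomorphism $B_\fp\simeq(B[h^{-1}])_\fq$, and this canonical isomorphism is precisely the stalk map induced by $\iota^\ast$. The main delicate point is this localization-of-localization identification, but it is routine via the universal property developed in Section \ref{section:localizations}. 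Combining the stalk isomorphisms with the homeomorphism yields that $\iota^\ast$ is an isomorphism of locally blueprinted spaces, completing the proof.
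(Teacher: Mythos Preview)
Your proposal is correct and follows essentially the same approach as the paper: apply Lemma~\ref{lemma:prime_ideals_of_localizations} with $S=\{h^n\}_{n\geq0}$ for the bijection, compute images of basis opens to upgrade to a homeomorphism, and then identify stalks via the universal property of localization. The only cosmetic difference is that the paper first rewrites $U_{a/h^n}=U_{a/1}$ before computing the image, whereas you work directly with $a/h^n$; the conclusions agree since $U_a\cap U_h$ is just $U_a$ viewed inside $U_h$.
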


\begin{proof}
 Let $Y=\Spec B[h^{-1}]$. Applying Lemma \ref{lemma:prime_ideals_of_localizations} to $S=\{h^i\}_{i\geq0}$ yields that $\varphi=\iota^\ast$ is a continuous bijection between the points of $Y$ and $U_h$. Given a basis open $U_{a/h^n}$ of $Y$, we first note that $U_{a/h^n}=U_{a/1}$ since $1/h^n$ is invertible in $B[h^{-1}]$. Using that $\varphi:Y\to U_h$ is a bijection, we obtain
 \[
  \varphi(U_{a/1}) \ = \ \{ \ \varphi(\fq)\in U_h \ | \ \fq\in Y\text{ and }\frac a1\notin\fq \ \} \ = \ \{ \ \fp \in U_h \ | \ a\notin\fp \ \} \ = \ U_a
 \]
 which is an open subset of $U_h$. Thus $\varphi$ is an open map and, consequently, a homeomorphism.

 To complete the proof of the theorem, we have to show that $\varphi^\#:\cO_{U_h}\to\cO_Y$ is an isomorphism of sheaves. This follows once we have shown that the morphisms $\varphi_\fq:\cO_{U_h,\fp}\to\cO_{Y,\fq}$ are isomorphisms of stalks for all $\fq\in Y$ and $\fp\varphi(\fq)$. Note that $\cO_{U_h,\fp}\simeq B_\fp$ and $\cO_{Y,\fq}\simeq B[h^{-1}]_\fq$ in a canonical way and that $\varphi_\fq$ is nothing else than the localization $\iota_\fq: B_\fp\to B[h^{-1}]_\fq$ of $\iota: B\to B[h^{-1}]$. Since both $B_\fp$ and $B[h^{-1}]_\fq$ satisfy the universal property that every morphism $g: B\to C$ with $g(B-\fp)\subset C^\times$ factors uniquely through $B\to B_\fp$ resp.\ $B\to B[h^{-1}]_\fq$, the morphism $\iota_\fq: B_\fp\to B[h^{-1}]_\fq$ is an isomorphism.
\end{proof}

This theorem has some immediate consequences.

\begin{cor}
 Let $h\in B$. Then $\cO_X(U_h)\simeq\Gamma B[h^{-1}]$.\qed
\end{cor}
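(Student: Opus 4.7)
The plan is to read this off directly from Theorem \ref{thm:basis_opens_are_affine}. That theorem shows the canonical morphism $\iota: B\to B[h^{-1}]$ induces an isomorphism of blue schemes $\iota^\ast:\Spec B[h^{-1}]\to U_h$, where $U_h$ carries the blue scheme structure inherited as an open subspace of $X=\Spec B$ (so that $\cO_{U_h}(V)=\cO_X(V)$ for open $V\subset U_h$, in particular $\cO_{U_h}(U_h)=\cO_X(U_h)$).

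Since $\iota^\ast$ is an isomorphism of locally blueprinted spaces, the induced morphism $(\iota^\ast)^\#$ on global sections is an isomorphism of blueprints
\[
 \cO_X(U_h) \ = \ \cO_{U_h}(U_h) \ \stackrel{\sim}{\longrightarrow} \ \cO_{\Spec B[h^{-1}]}\bigl(\Spec B[h^{-1}]\bigr) \ = \ \Gamma B[h^{-1}],
\]
which is the desired identification.

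There is no real obstacle here; the only subtlety worth mentioning is that the right-hand side is the blueprint $\Gamma B[h^{-1}]$ of global sections of $\Spec B[h^{-1}]$ (which may differ from $B[h^{-1}]$ itself when $B[h^{-1}]$ is not global, cf.\ Example \ref{ex:spec_and_global_sections}), so one should resist the temptation to conclude $\cO_X(U_h)\simeq B[h^{-1}]$. The content of the corollary is precisely the translation of the scheme-theoretic isomorphism of Theorem \ref{thm:basis_opens_are_affine} into a statement about sections, and this translation is functorial via the global sections functor $\Gamma(\blanc,\cO)$ of Section \ref{section:global_sections}.
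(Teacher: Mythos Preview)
Your argument is correct and is exactly the intended one: the paper records this corollary with a bare \qed{} immediately after Theorem \ref{thm:basis_opens_are_affine}, so it is meant to be read off by applying the global sections functor to the isomorphism $\iota^\ast:\Spec B[h^{-1}]\stackrel\sim\to U_h$. Your remark distinguishing $\Gamma B[h^{-1}]$ from $B[h^{-1}]$ is apt and consistent with the paper's Example \ref{ex:spec_and_global_sections}.
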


\begin{cor}\label{cor:affine_opens_are_a_basis}
 The affine open subschemes of a blue scheme form a basis for its topology.
\end{cor}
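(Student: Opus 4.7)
The plan is to combine the definition of a blue scheme with Theorem \ref{thm:basis_opens_are_affine}. Let $X$ be a blue scheme, $U \subset X$ an open subset, and $x \in U$ a point. The goal is to produce an affine open subscheme $V$ of $X$ with $x \in V \subset U$.

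First, I would invoke the defining property of a blue scheme: there exists an affine open subscheme $W$ of $X$ containing $x$, say $W \simeq \Spec B$ for some blueprint $B$. Replacing $U$ by $U \cap W$ (still an open neighborhood of $x$ in $W$), we reduce to the affine situation $X = \Spec B$. By Definition \ref{def:spectrum_of_a_blueprint}, the sets of the form $U_h = \{\fp \in \Spec B \mid h \notin \fp\}$ for $h \in B$ form a basis of the topology on $\Spec B$, so we can choose some $h \in B$ with $x \in U_h \subset U \cap W$.

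Now I would apply Theorem \ref{thm:basis_opens_are_affine}: the canonical morphism $B \to B[h^{-1}]$ induces an isomorphism $\Spec B[h^{-1}] \to U_h$ of blue schemes (where $U_h$ carries the restricted structure sheaf from $W$). Consequently $U_h$ is an affine open subscheme of $W$, and hence of $X$, satisfying $x \in U_h \subset U$.

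The proof contains no real obstacle since everything has been set up already; the content of the corollary is essentially a direct packaging of Theorem \ref{thm:basis_opens_are_affine} together with the fact that the basic opens $U_h$ form a topological basis on each affine piece. The only point that requires a moment's thought is the passage from the ambient blue scheme $X$ to an affine chart $W \ni x$, which is handled by the definition of a blue scheme.
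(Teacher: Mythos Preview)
Your proof is correct and follows essentially the same approach as the paper: reduce to the affine case via the definition of a blue scheme, then use that the $U_h$ form a basis of $\Spec B$ together with Theorem~\ref{thm:basis_opens_are_affine} to conclude. The paper's version is just a more compressed statement of exactly this argument.
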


\begin{proof}
 Since the open subsets of the form $U_h$ with $h\in B$ form a basis for the topology of $X=\Spec B$, the corollary is true for affine blue schemes. A basis of a blue scheme is given by the union of bases of an open affine covering of the blue scheme. Therefore, the collection of all affine open subschemes is a basis for its topology.
\end{proof}

From this, we can conclude that morphisms between blue schemes are \emph{locally algebraic}, i.e.\ we have the following statement.

\begin{thm}\label{thm:morphisms_are_locally_algebraic}
 Let $\varphi: Y\to X$ be morphism of blue schemes. Then there are open affine coverings $X=\bigcup_{i\in I}U_i$ and $Y=\bigcup_{i\in I} V_i$ of $X$ resp.\ $Y$ where $U_i\simeq\Spec B_i$ and $V_i\simeq\Spec C_i$ for global blueprints $B_i$ and $C_i$ and morphisms $f_i: B_i\to C_i$ of blueprints such that $\varphi\vert_{V_i}=f_i^\ast:V_i\to U_i$.
\end{thm}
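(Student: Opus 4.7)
The plan is to combine two tools from earlier in the section: Corollary \ref{cor:affine_opens_are_a_basis}, which says affine open subschemes form a basis for the topology of any blue scheme, and Corollary \ref{cor:spec_is_fully_faithful_on_global_blueprints}, which says $\Spec:\bp_\gl\to\bpspaces$ is fully faithful with $\Gamma(\blanc,\cO)$ as left inverse. Once we cut the morphism $\varphi$ into pieces between affine opens, full faithfulness hands us the blueprint morphisms for free.

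First I would cover $X$ by affine open subschemes $\{U_\alpha\}_{\alpha\in A}$, which is possible by Corollary \ref{cor:affine_opens_are_a_basis}. Each $U_\alpha$ is isomorphic to $\Spec \tilde B_\alpha$ for some blueprint $\tilde B_\alpha$; replacing $\tilde B_\alpha$ by $B_\alpha:=\Gamma \tilde B_\alpha$ and using Theorem \ref{thm:global_sections_are_global_blueprints}, we may arrange that $U_\alpha\simeq\Spec B_\alpha$ with $B_\alpha$ global. Next, for each $\alpha$, the preimage $\varphi^{-1}(U_\alpha)$ is an open subspace of $Y$ and hence inherits the structure of a blue scheme; by Corollary \ref{cor:affine_opens_are_a_basis} applied to this blue scheme, I can cover $\varphi^{-1}(U_\alpha)$ by affine open subschemes $\{V_{\alpha,\beta}\}_{\beta\in B_\alpha}$, and again, by taking global sections, I may assume $V_{\alpha,\beta}\simeq\Spec C_{\alpha,\beta}$ with $C_{\alpha,\beta}$ global. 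Then the restriction $\varphi_{\alpha,\beta}:=\varphi\vert_{V_{\alpha,\beta}}\colon V_{\alpha,\beta}\to U_\alpha$ is a local morphism of locally blueprinted spaces between spectra of global blueprints.

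Now I apply Corollary \ref{cor:spec_is_fully_faithful_on_global_blueprints}: since $\Spec:\bp_\gl\to\bpspaces$ is fully faithful, the morphism $\varphi_{\alpha,\beta}$ is induced by a unique blueprint morphism $f_{\alpha,\beta}:B_\alpha\to C_{\alpha,\beta}$, namely the morphism on global sections $\Gamma(\varphi_{\alpha,\beta}^\#):\Gamma(U_\alpha,\cO_X)\to\Gamma(V_{\alpha,\beta},\cO_Y)$ read through the identifications $U_\alpha\simeq\Spec B_\alpha$, $V_{\alpha,\beta}\simeq\Spec C_{\alpha,\beta}$. Finally, I re-index by $I:=\{(\alpha,\beta)\mid \alpha\in A,\ \beta\in B_\alpha\}$, setting $U_i:=U_\alpha$, $B_i:=B_\alpha$, $V_i:=V_{\alpha,\beta}$, $C_i:=C_{\alpha,\beta}$ and $f_i:=f_{\alpha,\beta}$ for $i=(\alpha,\beta)$; then $Y=\bigcup_{i\in I}V_i$, the family $\{U_i\}_{i\in I}$ still covers $X$ (with repetitions, which is harmless), and $\varphi\vert_{V_i}=f_i^\ast$ by construction.

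The only potentially subtle point is the appeal to Corollary \ref{cor:spec_is_fully_faithful_on_global_blueprints}: one must know that $\varphi_{\alpha,\beta}$ is a local morphism of locally blueprinted spaces, not merely a morphism of blueprinted spaces. But this is automatic, since $\varphi$ was assumed local and locality is a stalkwise property that passes to the restriction of $\varphi$ to any open subspace $V_{\alpha,\beta}\subset Y$ mapping into $U_\alpha\subset X$. Thus no extra argument is needed and the theorem follows.
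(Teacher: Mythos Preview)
Your proof is correct and follows essentially the same approach as the paper's: cover $X$ by affine opens, refine $\varphi^{-1}(U_\alpha)$ by affine opens of $Y$, re-index, and invoke Corollary \ref{cor:spec_is_fully_faithful_on_global_blueprints} to extract the blueprint morphisms on global sections. You are slightly more explicit than the paper in two places---arranging globality of the $B_\alpha$ and $C_{\alpha,\beta}$ via Theorem \ref{thm:global_sections_are_global_blueprints}, and noting that locality passes to restrictions---but these are exactly the points the paper handles implicitly. (One small notational slip: you use $B_\alpha$ both for the blueprint and for the index set of $\beta$'s; you may want to rename the latter.)
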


\begin{proof}
 Given an open affine covering $X=\bigcup_{i\in I}U_i$, we can cover each open $\varphi^{-1}(U_i)$ by affine open subschemes $V_{i,j}$ of $Y$ by Corollary \ref{cor:affine_opens_are_a_basis}. If we define $U_{i,j}=U_i$, then $\varphi(V_{i,j})\subset U_{i,j}$ for each $i$ and $j$. After replacing $(i,j)$ by a new index $i$, we can assume thus that we have an open affine covering $X=\bigcup_{i\in I}U_i$ of $X$ and an open affine covering $Y=\bigcup_{i\in I} V_i$ of $Y$ with $\varphi(V_i)\subset U_i$. Let $B_i=\cO_X(U_i)$ and $C_i=\cO_Y(V_i)$, then these are global blueprints and we know from Corollary \ref{cor:spec_is_fully_faithful_on_global_blueprints} that $U_i\simeq\Spec B_i$, that $V_i\simeq \Spec C_i$ and that $f_i=\varphi^\#(U_i): B_i\to C_i$ satisfies that $\varphi\vert_{V_i}=f_i^\ast:V_i\to U_i$.
\end{proof}


\subsection{Fibre products}
\label{section:fibre_products}

In this section, we will show that the usual construction of fibre products of schemes is adaptable to blue schemes. We begin with explaining how to glue blue schemes along open subschemes.

\begin{pg}
 An \emph{open subscheme of a blue scheme $X$} is an open subset $U$ of $X$ together with the relative topology and the restriction of structure sheaf $\cO_X$ to $U$. A morphism of blue schemes $\varphi:Y\to X$ is an \emph{open immersion} if $U=\im\varphi$ is an open subscheme of $X$ and if $\varphi:Y\to U$ is an isomorphism of blue schemes.

 Let $X$ be a blue scheme and $\cU$ a \emph{based system of open subschemes}, i.e.\ a family of open subschemes $U_i$ of $X$ together with the inclusion maps $\varphi_{i,j}:U_i\hookrightarrow U_j$, which are open immersions, such that for every $U_i$ and $U_j$, there are $U_{k_1},\dotsc, U_{k_n}$ such that $U_i\cap U_j=U_{k_1}\cup\dotsb\cup U_{k_n}$ (where the intersection and union are taken inside $X$). Then $X$ is the colimit of $\cU$ in $\bpspaces$. If, conversely, $\cU$ is a system of blue schemes and open immersions, then the colimit $X=\colim\cU$ exists in $\bpspaces$ and $\cU$ is a based system of open subschemes for $X$. In particular, the family of all affine open subschemes of $X$ together with the inclusion maps is a based system of open subschemes. 
\end{pg}

\begin{lemma}
 Let $X$ be a blue scheme. Then there is a morphism $\gamma:X\to \Spec\Gamma(X,\cO_X)$ of blue schemes such that every morphism $\varphi:X\to Y$ into an affine blue scheme $Y\simeq\Spec B$ factors uniquely through $\gamma$.
\end{lemma}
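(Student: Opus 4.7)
The strategy is to construct $\gamma$ locally from an affine open covering, verify that the pieces glue, and then deduce the universal property from the fact that $\Spec$ is fully faithful on global blueprints (Corollary \ref{cor:spec_is_fully_faithful_on_global_blueprints}) together with Theorem \ref{thm:global_sections_are_global_blueprints}.

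First, I would define $\gamma$ at the level of points. For each $x\in X$, let $\rho_x:\Gamma(X,\cO_X)\to\cO_{X,x}$ be the canonical germ map and put $\gamma(x)=\rho_x^{-1}(\fm_x)$. By Proposition \ref{prop_inverse_images}(\ref{inverse1}), this is a prime ideal of $\Gamma(X,\cO_X)$, so $\gamma(x)\in\Spec\Gamma(X,\cO_X)$. Next, I choose an affine open covering $X=\bigcup V_j$ with $V_j\simeq\Spec C_j$ for global blueprints $C_j=\cO_X(V_j)$ (possible by Corollary \ref{cor:affine_opens_are_a_basis} together with Theorem \ref{thm:global_sections_are_global_blueprints}). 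The restriction map $\rho_j:\Gamma(X,\cO_X)\to C_j$ induces $\rho_j^\ast:\Spec C_j\to\Spec\Gamma(X,\cO_X)$, and by the explicit description of $\rho_j^\ast$ in terms of preimages of prime ideals, together with the compatibility of germ maps with restriction to $V_j$, we see $\gamma|_{V_j}=\rho_j^\ast$ under the identification $V_j\simeq\Spec C_j$. In particular, $\gamma|_{V_j}$ is continuous and local, and carries a canonical morphism of structure sheaves.

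Second, I would glue these local pieces into a morphism of blue schemes. On overlaps $V_j\cap V_k$, both $\gamma|_{V_j}$ and $\gamma|_{V_k}$ agree pointwise since the germ map $\rho_x$ does not depend on the chosen affine neighborhood of $x$. On the sheaf level, both descriptions factor through the common restriction $\Gamma(X,\cO_X)\to\cO_X(V_j\cap V_k)$ and so define the same morphism after covering $V_j\cap V_k$ by affine open subschemes and invoking Theorem \ref{thm:global_sections_are_global_blueprints} once more. Gluing along the based system of open subschemes (cf.\ the preceding paragraph in the excerpt) yields a well-defined local morphism $\gamma:X\to\Spec\Gamma(X,\cO_X)$. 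By construction, the induced map $\gamma^\#(X):\Gamma\Gamma(X,\cO_X)\to\Gamma(X,\cO_X)$ is inverse to the globalization $\sigma$ of $\Gamma(X,\cO_X)$, hence is an isomorphism.

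Third, for the universal property, suppose $\varphi:X\to Y=\Spec B$ is a morphism with $Y$ affine. Taking global sections gives a blueprint morphism $\varphi^\#(Y):\cO_Y(Y)\to\Gamma(X,\cO_X)$; since $\cO_Y(Y)\simeq\Gamma B$ is global and $Y\simeq\Spec\Gamma B$ via Theorem \ref{thm:global_sections_are_global_blueprints}, applying $\Spec$ produces $\psi:=(\varphi^\#(Y))^\ast:\Spec\Gamma(X,\cO_X)\to Y$. I would then verify $\psi\circ\gamma=\varphi$ by checking on each affine $V_j=\Spec C_j$: the morphism $\varphi|_{V_j}$ corresponds, via Theorem \ref{thm:morphisms_are_locally_algebraic}, to the blueprint morphism $\rho_j\circ\varphi^\#(Y):\cO_Y(Y)\to C_j$, and both $\psi\circ\gamma|_{V_j}$ and $\varphi|_{V_j}$ induce this same morphism on global sections, so they coincide by Corollary \ref{cor:spec_is_fully_faithful_on_global_blueprints}. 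Uniqueness of $\psi$ is then immediate: any other factorization $\psi'$ takes the same value on global sections (by applying $\Gamma(\blanc,\cO)$ to $\psi'\circ\gamma=\varphi$ and using that $\gamma^\#(X)$ is invertible), and since both source and target of $\psi'$ are spectra of global blueprints, Corollary \ref{cor:spec_is_fully_faithful_on_global_blueprints} forces $\psi'=\psi$.

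The main obstacle is the gluing step: one must check that the locally defined morphisms $\gamma|_{V_j}$ really patch on overlaps, because the overlaps $V_j\cap V_k$ need not themselves be affine. The remedy is to further cover each intersection by affine open subschemes and reduce to the affine comparison provided by Theorem \ref{thm:global_sections_are_global_blueprints} and Corollary \ref{cor:spec_is_fully_faithful_on_global_blueprints}; once this bookkeeping is in place, the remaining arguments are formal consequences of the adjunction between $\Spec$ and global sections on global blueprints.
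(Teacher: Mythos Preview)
Your approach is essentially the same as the paper's: both construct $\gamma$ by covering $X$ with affine opens $V_j\simeq\Spec C_j$, using the restriction maps $\Gamma(X,\cO_X)\to C_j$ to get local morphisms $V_j\to\Spec\Gamma(X,\cO_X)$, gluing, and reading off the factorization from the commutative triangles $B\to\Gamma(X,\cO_X)\to C_j$. You are considerably more careful than the paper about the gluing on overlaps and about uniqueness (the paper simply asserts both); one small caution is that your claim that $\gamma^\#(X)$ is the two-sided inverse of the globalization $\sigma$ amounts to asserting that $\Gamma(X,\cO_X)$ is itself global, which is not established in the paper for arbitrary blue schemes---but only $\gamma^\#(X)\circ\sigma=\id$ is needed to run the uniqueness argument, and that identity is immediate from your construction.
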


\begin{proof}
 By Theorem \ref{thm:morphisms_are_locally_algebraic}, there is a covering $X=\bigcup U_i$ by open affine subschemes $U_i\simeq\Spec B_i$ of $X$ and blueprint morphisms $B\to B_i$ such that $\varphi\vert_{U_i}=f_i^\ast:\Spec B_i\to\Spec B$. This means that we have commutative diagrams
 \[
  \xymatrix@C=4pc@R=2pc{\Gamma(X,\cO_X)\ar[d]_{\res_{X|U_i}}  & B \ar[l]_{\varphi^\#} \ar[dl]^{f_i}\\   B_i}
  \qquad\text{and}\qquad
  \xymatrix@C=4pc@R=2pc{\Spec\Gamma(X,\cO_X)  \ar[r]& \Spec B \\   U_i \ar[u]\ar[ur]_{f_i^\ast} \ar@{}[r]|\subset & X \ar[u]_\varphi}
 \]
 for every $i$. Since $X$ is covered by the open subschemes $U_i$, the morphisms $U_i\to \Spec\Gamma(X,\cO_X)$ glue to a morphism $\gamma:X\to \Spec\Gamma(X,\cO_X)$, which satisfies the claimed properties of the lemma. 
\end{proof}

\begin{cor}
 For every blue scheme $X$ and for every blueprint $B$, we have $\Hom(X,\Spec B)\simeq\Hom(B,\Gamma(X,\cO_X)$.\qed 
\end{cor}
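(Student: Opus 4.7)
The plan is to string together the preceding lemma with the results of Section \ref{section:global_sections}, since the corollary is essentially a direct consequence of combining them. The global section blueprint $\Gamma(X,\cO_X)$ will serve as the bridge between morphisms out of $X$ and morphisms into $\Gamma(X,\cO_X)$ from arbitrary (not necessarily global) blueprints $B$.

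First, I would apply the preceding lemma: since $\Spec B$ is by definition affine, every morphism $\varphi\colon X\to\Spec B$ factors uniquely through $\gamma\colon X\to \Spec\Gamma(X,\cO_X)$. This provides a natural bijection
\[
 \Hom_{\BSch}(X,\Spec B) \ \stackrel{\sim}{\longrightarrow}\ \Hom_{\BSch}\bigl(\Spec\Gamma(X,\cO_X),\,\Spec B\bigr).
\]
The right-hand side is now an affine-to-affine Hom, but $B$ need not be global, so we cannot yet apply Corollary \ref{cor:spec_is_fully_faithful_on_global_blueprints} directly. I would dispose of this by invoking Theorem \ref{thm:global_sections_are_global_blueprints}: the globalization morphism $\sigma\colon B\to\Gamma B$ induces an isomorphism $\sigma^\ast\colon\Spec\Gamma B\stackrel\sim\to\Spec B$, so composition with $\sigma^\ast$ identifies the above Hom-set with $\Hom_{\BSch}(\Spec\Gamma(X,\cO_X),\,\Spec\Gamma B)$.

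Now both source and target are spectra of global blueprints ($\Gamma(X,\cO_X)$ is global by Corollary \ref{cor:globalization_is_global}, and so is $\Gamma B$). Hence Corollary \ref{cor:spec_is_fully_faithful_on_global_blueprints} applies and yields a natural bijection
\[
 \Hom_{\BSch}\bigl(\Spec\Gamma(X,\cO_X),\,\Spec\Gamma B\bigr) \ \stackrel{\sim}{\longrightarrow}\ \Hom_{\bp}\bigl(\Gamma B,\,\Gamma(X,\cO_X)\bigr).
\]
Finally, I would invoke Corollary \ref{cor:globalization_is_global} one more time, applied on the algebraic side: every morphism from $B$ into a global blueprint factors uniquely through $\sigma\colon B\to\Gamma B$, so precomposition with $\sigma$ gives $\Hom_{\bp}(\Gamma B,\Gamma(X,\cO_X))\simeq\Hom_{\bp}(B,\Gamma(X,\cO_X))$. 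Chaining the four bijections and observing that each is natural in $X$ and $B$ (all the intermediate isomorphisms come from universal properties) gives the desired identification.

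There is essentially no obstacle here; the proof is a formal consequence of assembling the universal properties in the correct order. The only small care to take is to make sure one does not forget the globalization step for $B$: if one were to try to apply $\Spec$-fullness directly to $\Spec B$ without first replacing it with $\Spec\Gamma B$, the argument would break, since $\Spec$ is only fully faithful on the subcategory $\bp_\gl$. This is precisely why Theorem \ref{thm:global_sections_are_global_blueprints} is needed as an intermediate step, rather than just Corollary \ref{cor:spec_is_fully_faithful_on_global_blueprints} alone.
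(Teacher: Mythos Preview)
Your overall strategy is sound and follows the natural path from the lemma to the corollary: reduce to the affine case via the universal property of $\gamma$, then invoke the globalization machinery of Section~\ref{section:global_sections}. The paper itself gives no proof beyond the \qed, so you are filling in details the paper leaves implicit.

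There is, however, a genuine gap in one step. You assert that $\Gamma(X,\cO_X)$ is global and cite Corollary~\ref{cor:globalization_is_global} for this. That corollary only concerns the endofunctor $\Gamma$ on $\bp$, i.e.\ it says $\Gamma C$ is global when $C$ is a \emph{blueprint} (so that $\Gamma C=\Gamma(\Spec C,\cO_{\Spec C})$). It does not say that the global sections $\Gamma(X,\cO_X)$ of an \emph{arbitrary} blue scheme $X$ are global; for non-affine $X$ this blueprint is not a priori of the form $\Gamma C$ for any blueprint $C$. Both your step~3 (applying Corollary~\ref{cor:spec_is_fully_faithful_on_global_blueprints}) and your step~4 (applying the universal property of globalization) rely on this unproven claim. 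Indeed, if you run your chain of bijections without it, you land at $\Hom(B,\Gamma(\Gamma(X,\cO_X)))$ rather than $\Hom(B,\Gamma(X,\cO_X))$, and the two agree precisely when $\Gamma(X,\cO_X)$ is global.

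The gap is fillable, but not by citation alone. One route is to verify directly that the map $\gamma^\#\colon \Gamma(\Gamma(X,\cO_X))\to\Gamma(X,\cO_X)$ induced by $\gamma$ on global sections is a left inverse to the globalization $\sigma_{\Gamma(X,\cO_X)}$; together with Lemma~\ref{lemma:blpr_injects_into_sections} this would force $\sigma_{\Gamma(X,\cO_X)}$ to be an isomorphism. Alternatively, one can bypass the issue entirely by exhibiting the two maps $f\mapsto(\Spec f)\circ\gamma$ and $\varphi\mapsto\varphi^\#$ (the latter read off from the commutative diagram in the proof of the lemma) and checking they are mutually inverse by hand, which is presumably what the paper's \qed gestures at.
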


With these facts at hand, we can adopt the usual construction of fibre products to blue schemes (e.g.\ cf.\ \cite[Thm.\ 3.3]{Hartshorne}).

\begin{prop}
 Given two morphisms $X\to Z$ and $Y\to Z$. Then the fibre product $X\times_ZY$ exists in $\BSch$. If $X\simeq\Spec B_1$, $Y\simeq\Spec  B_2$ and $Z\simeq\Spec  B_0$, then $X\times_ZY\simeq\Spec(B_1\otimes_{B_0}B_2)$.\qed
\end{prop}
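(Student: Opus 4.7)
The plan is to first treat the affine case by a direct Yoneda-style argument, and then bootstrap to the general case by the familiar gluing procedure of Hartshorne, using that affine open subschemes form a basis (Corollary \ref{cor:affine_opens_are_a_basis}) and that $\Hom(W, \Spec B) \simeq \Hom(B, \Gamma(W,\cO_W))$ for every blue scheme $W$ (the corollary preceding the proposition).

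For the affine case, I would argue as follows. Let $X = \Spec B_1$, $Y = \Spec B_2$, $Z = \Spec B_0$, so by Corollary \ref{cor:spec_is_fully_faithful_on_global_blueprints} the morphisms $X\to Z$ and $Y\to Z$ correspond to blueprint morphisms $B_0 \to B_1$ and $B_0 \to B_2$ making $B_1$ and $B_2$ into $B_0$-algebras. For any blue scheme $W$ with $C = \Gamma(W,\cO_W)$, applying the mapping-space corollary and the universal property of the tensor product in $\bp$ (Proposition 1.12.2 in the excerpt) gives a chain of natural bijections
\[
\Hom_{\BSch}(W, \Spec(B_1 \otimes_{B_0} B_2)) \ \simeq \ \Hom_{\bp}(B_1 \otimes_{B_0} B_2, C)
\]
\[
\simeq \ \Hom_{\bp}(B_1, C) \times_{\Hom_{\bp}(B_0, C)} \Hom_{\bp}(B_2, C) \ \simeq \ \Hom_{\BSch}(W, X) \times_{\Hom_{\BSch}(W, Z)} \Hom_{\BSch}(W, Y),
\]
which is exactly the universal property of the fibre product. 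Hence $\Spec(B_1 \otimes_{B_0} B_2)$ represents $X \times_Z Y$ in $\BSch$.

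For the general case, I would proceed in the standard two-step manner. First, assume $Z$ is affine. Cover $X$ and $Y$ by affine open subschemes $X = \bigcup_i U_i$ and $Y = \bigcup_j V_j$. By the affine case each product $U_i \times_Z V_j$ exists; for each pair of indices, the open subschemes $(U_i \cap U_{i'}) \times_Z (V_j \cap V_{j'})$ (themselves covered by affines, hence built as unions of products of affine opens) are naturally open subschemes of both $U_i \times_Z V_j$ and $U_{i'} \times_Z V_{j'}$. These gluing data satisfy the cocycle condition because on triple intersections all three restrictions represent the same functor on test blue schemes $W$, and so yield a blue scheme $X \times_Z Y$ by the gluing of based systems of open subschemes described at the beginning of Section \ref{section:fibre_products}. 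The universal property is verified locally: any pair of morphisms $W \to X$, $W \to Y$ agreeing over $Z$ can be checked on an affine open cover of $W$ pulled back from the covers of $X$ and $Y$. Second, for general $Z$, cover $Z = \bigcup_k W_k$ by affines, let $X_k$, $Y_k$ be the preimages in $X$, $Y$, and form $X_k \times_{W_k} Y_k$ by the previous step; these glue along $X_k \cap X_{k'}$ mapping into $W_k \cap W_{k'}$ to produce $X \times_Z Y$.

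The main technical obstacle I expect is not the affine case (which is immediate from the representability framework set up in the previous sections) but the consistency of the gluing data in the general case. Concretely, one must verify that for open subschemes $U \subset X$ and $V \subset Y$ mapping to an affine $Z' \subset Z$, the subscheme $U \times_{Z'} V$ is naturally identified with an open subscheme of any larger fibre product containing it; this rests on the fact that open immersions are stable under fibre products, which in turn follows from the affine local description together with the observation that for $h \in B_1$ one has $B_1[h^{-1}] \otimes_{B_0} B_2 \simeq (B_1 \otimes_{B_0} B_2)[(h\otimes 1)^{-1}]$, combined with Theorem \ref{thm:basis_opens_are_affine}. Once this stability is in hand, the cocycle condition is automatic and gluing produces the required blue scheme.
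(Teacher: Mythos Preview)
Your proposal is correct and follows exactly the route the paper has in mind: the paper's ``proof'' is simply the sentence ``With these facts at hand, we can adopt the usual construction of fibre products to blue schemes (e.g.\ cf.\ [Hartshorne, Thm.\ 3.3])'' followed by a \qed, and you have faithfully unpacked that construction---the affine case via the adjunction $\Hom(W,\Spec B)\simeq\Hom(B,\Gamma(W,\cO_W))$ together with the universal property of $\otimes$, and the general case via Hartshorne's gluing argument. The one minor sloppiness is that for arbitrary (not necessarily global) $B_i$ the morphisms $\Spec B_i\to\Spec B_0$ correspond to maps $B_0\to\Gamma B_i$ rather than $B_0\to B_i$, but since $\Spec B_i\simeq\Spec\Gamma B_i$ this is harmless and the statement is to be read with given $B_0$-algebra structures on $B_1,B_2$.
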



\subsection{Residue fields}
\label{section:residue_fields}

 In this section, we define the residue field of a point of a blue scheme. 

\begin{lemma}
 Let $\fp$ be a prime ideal of $B$ and $S=B-\fp$. Then there is a unique morphism $B\to\kappa(\fp)$ into a blue field $\kappa(\fp)$ such that every morphism $B\to C$ that maps $\fp$ to a zero of $C$ and $S$ to the units of $C$ factors through $B\to\kappa(\fp)$. The blue field $\kappa(\fp)$ is isomorphic to both $B_\fp/\fp B_\fp$ and $(B/\fp)_{(\fp)}$.
\end{lemma}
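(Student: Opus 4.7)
The plan is to define $\kappa(\fp):=B_\fp/\fp B_\fp$ and establish three things: (a) this is a blue field, (b) the composition $B\to B_\fp\to\kappa(\fp)$ has the stated universal property, and (c) the resulting object coincides with $(B/\fp)_{(\fp)}$. Uniqueness of $B\to\kappa(\fp)$ is then automatic from the universal property.

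For (a), I would use that $B_\fp$ is a local blueprint with unique maximal ideal $\fp B_\fp$ (the statement recorded just before Definition \ref{def:spectrum_of_a_blueprint}). Passing to the quotient yields a proper blueprint by Proposition \ref{prop:congruences_are_kernels}, and the image of $\fp B_\fp$ becomes a single absorbing element, hence a zero in $\kappa(\fp)$ by Lemma \ref{lemma:absorbing_elements_in_the_absorbing_ideal} together with Lemma \ref{lemma_bp-fact}\eqref{bp-fact5}. Every element outside $\fp B_\fp$ is a unit of $B_\fp$ and its class remains a unit in $\kappa(\fp)$. Since $\fp$ is a prime ideal we have $1\notin \fp B_\fp$, so $1\n=\zero$ in $\kappa(\fp)$ by maximality; thus $\kappa(\fp)$ is a blue field.

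For (b), let $f\colon B\to C$ send $S$ into $C^\times$ and $\fp$ into the zeros of $C$. The universal property of localization (Section \ref{section:localizations}) produces a unique extension $\tilde f\colon B_\fp\to C$. Each generator $a/s$ of $\fp B_\fp$ is sent to $f(a)\cdot f(s)^{-1}$, which is an absorbing element of $C$ because $f(a)$ is a zero. Therefore $\tilde f(\fp B_\fp)$ is contained in the absorbing ideal of $\tilde f$, so by Proposition \ref{prop:congruence_of_an_ideal} the kernel congruence $\sim_{\tilde f}$ contains $\sim_{\fp B_\fp}$, and $\tilde f$ factors uniquely through $B_\fp\to\kappa(\fp)$. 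For (c), I would observe that the map $B\to B/\fp\to (B/\fp)_{(\fp)}$ also sends $\fp$ to a zero and $S$ to units, so by the universal property just established in (b) it factors uniquely through $\kappa(\fp)$; conversely $B\to\kappa(\fp)$ factors through $(B/\fp)_{(\fp)}$ by the analogous universal property of the latter (first quotient by the ideal $\fp$, then invert $S$). The uniqueness clauses force these two morphisms to be mutually inverse.

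The main obstacle I anticipate is bookkeeping in step (b): one has to verify that the extension $\tilde f$ indeed carries \emph{all} of $\fp B_\fp$ (not just its generators) into absorbing elements, so that Proposition \ref{prop:congruence_of_an_ideal} applies to descend $\tilde f$ to $\kappa(\fp)$. This requires tracking ideals and their associated congruences through both the localization and the quotient constructions, and is the only point where one must argue carefully rather than invoke a prior universal property directly.
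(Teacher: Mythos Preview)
Your approach is exactly the paper's, spelled out in detail; the paper itself simply asserts that both $B_\fp/\fp B_\fp$ and $(B/\fp)_{(\fp)}$ satisfy the stated universal property and that the former is a blue field, without further argument. One small remark on step~(a): the lemmas you cite do not actually show that the absorbing class in the quotient is a \emph{zero}---what does the job (and what the paper tacitly uses) is that when $B_\fp$ has a zero it lies in $\fp B_\fp$ and its image in the quotient remains a zero---but this subtlety is equally elided in the paper.
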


\begin{proof}
 It is clear that both $B_\fp/\fp B_\fp$ and $(B/\fp)_{(\fp)}$ together with their canonical epimorphisms $B\to B_\fp/\fp B_\fp$ resp.\ $B\to (B/\fp)_{(\fp)}$ satisfy the universal property of $\kappa(\fp)$. Thus they are isomorphic to $\kappa(\fp)$ and witness the existence of $\kappa(\fp)$. It is clear that $B_\fp/\fp B_\fp$ is a blue field.
\end{proof}

We call $\kappa(\fp)$ the \emph{residue field of $B$ at $\fp$}. Let $X$ is a blue scheme and $x$ a point of $X$. Let $\fm_x$ be the maximal ideal of $\cO_{X,x}$. Then we call $\kappa(x)=\cO_{X,x}/\fm_x$ the \emph{residue field of $X$ at $x$}.


\subsection{Subcategories of $\BSch$}
\label{section:subcategories_of_bsch}

In this section, we define certain subcategories of $\BSch$ that come from the subcategories $\bp_\canc$, $\bp_\proper$, $\bp_\inv$, $\bp_0$, $\cM$, $\cM_0$, $\SRings$ and $\Rings$ of $\bp$. In particular, we will identify the categories of usual schemes, $\cM$-schemes and $\cM_0$-schemes with subcategories of $\bp$, as explained in more detail in the following two sections.

\begin{pg}\label{pg:c-schemes}
 Given a property $\cE$ of blueprints that is stable under localization, we can define $\cE$-schemes as follows. A blue scheme $X$ is an $\cE$-scheme if it has an open affine cover $X=\bigcup U_i$ where $U_i\simeq\Spec B_i$ and all $B_i$ have property $\cE$. This definition is not independent of the chosen cover since we did not ask $\cE$ to hold for a blueprint if it holds for a localization (cf.\ Remark \ref{rem:spec_f1_times_f1}). But every open affine cover of an $\cE$-scheme has a refinement by spectra of blueprints with property $\cE$. A blue scheme $X$ is an $\cE$-scheme if and only if $X$ has a basis of affine open subschemes that are spectra of blueprints with $\cE$.

 If $\cC$ is a full subcategory of $\bp$ that contains all localizations, then a $\cC$-scheme is a blue scheme that has an open affine cover $X=\bigcup U_i$ where $U_i\simeq\Spec B_i$ and all $B_i$ are in $\cC$. This applies to all subcategories $\bp_\canc$, $\bp_\proper$, $\bp_\inv$, $\bp_0$, $\cM$, $\cM_0$, $\SRings$ resp.\ $\Rings$ of $\bp$. 
\end{pg}
 
\begin{pg}\label{pg:base_extensions_of_blue_schemes}
 Let $\cC$ be a full subcategory of $\bp$ that contains all localizations and let $\iota:\cC\to \bp$ be the inclusion functor. Assume that there is for every blueprint $B$ a blueprint $\cG(B)$ in $\cC$ and a morphism $f:B\to\cG(B)$ such that every morphism from $B$ into a blueprint in $\cC$ factors uniquely through $f$. This defines a functor $\cG:\bp\to\cC$, which is a left-adjoint of $\iota: \cC\to\bp$. 
\end{pg}

\begin{prop}\label{prop:base_extensions_of_blue_schemes}
 Let $\cC$ and $\cG$ be as above. Then we can associate to every blue scheme $X$ a $\cC$-scheme $\cG(X)$ and a morphism $\beta:\cG(X)\to X$ that satisfies the universal property that every morphism from a $\cC$-scheme into $X$ factors uniquely through $\beta$.
\end{prop}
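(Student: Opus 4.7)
The plan is to construct $\cG(X)$ by gluing the spectra $\Spec\cG(B_i)$ obtained from an affine open cover $\{U_i = \Spec B_i\}$ of $X$. First, for the affine case $X = \Spec B$, I would set $\cG(X) := \Spec \cG(B)$ and take $\beta$ to be the morphism induced by the unit $\eta_B: B \to \cG(B)$ of the adjunction. The universal property is then immediate from Corollary \ref{cor:spec_is_fully_faithful_on_global_blueprints}: a morphism from an affine $\cC$-scheme $\Spec C$ into $\Spec B$ corresponds to a blueprint morphism $B \to C$, which by the adjunction factors uniquely through $\eta_B$.

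The crucial intermediate step is to establish the compatibility of $\cG$ with localization: for $h \in B$, writing $h$ also for $\eta_B(h) \in \cG(B)$, I claim there is a canonical isomorphism $\cG(B[h^{-1}]) \simeq \cG(B)[h^{-1}]$. Both sides represent the same functor on $\cC$, namely those blueprint morphisms $B \to C$ (with $C$ in $\cC$) that send $h$ to a unit, and the right-hand side lies in $\cC$ by the hypothesis that $\cC$ is closed under localization. Combined with Theorem \ref{thm:basis_opens_are_affine}, this identifies $\beta^{-1}(U_h)$ inside $\Spec\cG(B)$ with $\Spec\cG(B[h^{-1}])$, so the construction respects restriction to basic open subschemes.

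With this at hand, I would globalize by choosing an affine open cover $X = \bigcup_i U_i$ with $U_i = \Spec B_i$, setting $W_i := \Spec\cG(B_i)$, and gluing the $W_i$ along preimages of the overlaps $U_i \cap U_j$. By Corollary \ref{cor:affine_opens_are_a_basis} every such overlap is covered by basic opens of both $U_i$ and $U_j$, and the localization compatibility yields canonical transition isomorphisms between the corresponding opens of $W_i$ and $W_j$. The cocycle condition follows from the uniqueness clause of the universal property of $\cG$, so we obtain a $\cC$-scheme $\cG(X)$ together with $\beta: \cG(X) \to X$ restricting to the affine morphisms on each $W_i$.

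To verify the universal property globally, given a morphism $\varphi: Y \to X$ from a $\cC$-scheme $Y$, I would invoke Theorem \ref{thm:morphisms_are_locally_algebraic} (together with the refinement remark in paragraph \ref{pg:c-schemes} to ensure the sections on $Y$ lie in $\cC$) to obtain compatible affine covers $Y = \bigcup V_k$, $X = \bigcup U'_k$ with $V_k = \Spec C_k$, $C_k \in \cC$, $U'_k = \Spec B'_k$, and $\varphi|_{V_k}$ induced by blueprint morphisms $f_k: B'_k \to C_k$. Each $f_k$ factors uniquely through $\eta_{B'_k}$, inducing morphisms $V_k \to \beta^{-1}(U'_k)$ that agree on overlaps by uniqueness and therefore glue to the desired unique lift of $\varphi$. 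The main obstacle I foresee is the localization compatibility in the second step; once that is settled, both the construction and the universal property reduce to affine-local statements assembled by routine gluing.
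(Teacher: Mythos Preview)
Your proposal is correct and follows essentially the same strategy as the paper: construct $\cG(X)$ affine-locally as $\Spec\cG(B_i)$, glue, and deduce the universal property from Theorem~\ref{thm:morphisms_are_locally_algebraic}. The only organizational differences are that the paper indexes the gluing by the based system of \emph{all} affine open subschemes (rather than a chosen cover with basic-open overlaps), and that where you isolate and prove the localization compatibility $\cG(B[h^{-1}])\simeq\cG(B)[h^{-1}]$, the paper instead invokes stability of open immersions under base change to obtain $\cG(U_i)\hookrightarrow\cG(U_j)$---which amounts to the same identification, so your more explicit treatment of this step is if anything a small gain in clarity.
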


\begin{proof}
 The $\cC$-scheme $\cG(X)$ is constructed as follows. Let $\cU$ be the based system of all affine open subschemes $U_i\simeq\Spec B_i$ of $X$ together with the inclusion maps $\varphi_{i,j}:U_i\to U_j$. The canonical morphisms $f_i: B_i\to\cG(B_i)$ define morphisms $\psi=f_i^\ast:\Spec\cG(B_i)\to\Spec B_i$ of blue schemes. Note that open immersions are stable under base extensions, i.e.\ if $\iota:U\to X$ is an open immersion and $\varphi:Y\to X$ is a morphism of blue schemes, then $U\times_X Y$ is $\varphi^{-1}(\iota(U))$ and the inclusion $\iota':\varphi^{-1}(\iota(U))\to Y$ is an open immersion. Thus the open immersions $\varphi_{i,j}:U_i\to U_j$ define open immersions $\cG(\varphi_{i,j}):\cG(U_i)\to\cG(U_j)$ and we obtain a system $\cG(\cU)$ of $\cC$-schemes and open immersions. Further we have commutative diagrams
 \[
  \xymatrix@R=1pc@C=3pc{\cG(U_i)\ar[dd]_{\cG(\varphi_{i,j})}\ar[r]^{\psi_i}  &  U_i\ar[dd]_{\varphi_{i,j}}\ar[dr]  \\
                                                                             &                                     & X \\
                        \cG(U_j)\ar[r]^{\psi_j}  &  U_j\ar[ur] }
 \]
 for every $i$ and $j$ such that there is an open immersion $\varphi_{i,j}:U_i\to U_j$. This defines a morphism $\beta:\colim \cG(\cU)\to X$ and we define $\cG(X)$ as $\colim \cG(\cU)$, which is a $\cC$-scheme. The claimed universal property of $\beta:\cG(X)\to X$ follows from Theorem \ref{thm:morphisms_are_locally_algebraic}.
\end{proof}

\begin{pg}
 A blue scheme is called \emph{cancellative / proper / with inverses / with a zero} if it has a cover by affine open subschemes that are spectra of blueprints that are cancellative / proper / with inverses / with a zero. 

 We denote the \emph{full subcategory of cancellative blue schemes} by $\BSch_\canc$. We have for every blueprint $B$ a universal morphism $B\to B_\canc$ into a cancellative blueprint (cf.\ paragraph \ref{pg:generated_pre-addition}). Thus we can apply Proposition \ref{prop:base_extensions_of_blue_schemes} to obtain for every blue scheme $X$ a universal morphism $X_\canc\to X$ from a cancellative blue scheme $X_\canc$ into $X$.
 
 We denote the \emph{full subcategory of proper blue schemes} by $\BSch_\proper$. Since we have universal morphisms $B\to B_\proper$ (cf.\ Lemma \ref{lemma_proper}), Proposition \ref{prop:base_extensions_of_blue_schemes} yields for every blue scheme $X$ a universal morphism $X_\proper\to X$ from a proper blue scheme $X_\proper$ into $X$.
 
 We denote the \emph{full subcategory of proper blue schemes with inverses} by $\BSch_\inv$. Since we have universal morphisms $B\to B_\inv$ (cf.\ Lemma \ref{lemma_inv} \eqref{part1}), Proposition \ref{prop:base_extensions_of_blue_schemes} yields for every blue scheme $X$ a universal morphism $X_\inv\to X$ from a proper blue scheme $X_\inv$ with inverses into $X$.
 
 We denote the \emph{full subcategory of proper blue schemes with a zero} by $\BSch_0$. Since we have universal morphisms $B\to B_0$ (cf.\ Lemma \ref{lemma_inv} \eqref{part2}), Proposition \ref{prop:base_extensions_of_blue_schemes} yields for every blue scheme $X$ a universal morphism $X_0\to X$ from a proper blue scheme $X_0$ with a zero into $X$.
\end{pg}


\subsection{Monoidal schemes}
\label{section:monoidal_schemes}

  A blue scheme $X$ is called a \emph{monoidal scheme (with a zero)} if it has an open affine cover $X=\bigcup U_i$ where $U_i\simeq\Spec B_i$ for monoids $B_i$ (with a zero). The embedding $\iota_\cM:\cM\to\bp$ of monoids into blueprints extends to an embedding of $\cM$-schemes (cf.\ \cite[Section 1.1]{LL09b} for a definition) into blue schemes whose essential image is the full subcategory of monoidal schemes. Thus there is no confusion if we call monoidal schemes $\cM$-schemes in accordance with paragraph \ref{pg:c-schemes}. We denote the full subcategory of $\cM$-schemes by $\Sch_\cM$.

 Similarly, the category of $\cM_0$-schemes (cf.\ \cite[Section 1.6]{LL09b} for a definition) embeds into $\BSch$, and its essential image is the full subcategory of monoidal schemes with a zero. Therefore, we may call a monoidal scheme with a zero an $\cM_0$-scheme. We denote the full subcategory of $\cM_0$-schemes by $\Sch_{\cM_0}$.

\begin{rem}\label{rem:spec_f1_times_f1}
 Note that in general not every open affine subscheme of an $\cM_0$-scheme with a zero is a spectrum of a monoid with a zero. To see this, consider the following example. The disjoint union $X=\Spec\Fun\amalg\Spec\Fun$ is a monoidal scheme with a zero since $\Fun=\{0,1\}$ is a monoid with a zero. The global sections of $X$ are $\Gamma(X,\cO_X)=\Fun\times\Fun$, which is not a monoid with a zero since it contains the additive relation $(0,1)+(1,0)\=(0,0)+(1,1)$. But $X=\Spec\Fun\times\Fun$ is an affine blue scheme.
\end{rem}


\subsection{Grothendieck schemes and semiring schemes}
\label{section:grothendieck_schemes}

 A \emph{Grothendieck scheme} is a blue scheme that has an open affine cover $X=\bigcup U_i$ where $U_i\simeq\Spec B_i$ for rings $B_i$. This is the same as a $\Rings$-scheme after paragraph \ref{pg:c-schemes}. The embedding $\fB:\Rings\to\bp$ of rings into blueprints extends to an embedding $\fB:\Sch\to\BSch$ of schemes (in the usual sense) into blue schemes whose essential image is the subcategory of Grothendieck schemes. Thus we may think of schemes as Grothendieck schemes, and we denote the category of Grothendieck schemes by $\Sch$. Since every blueprint has a universal morphism $B\to B_\Z$ to a ring, every blue scheme $X$ has a universal morphism from a Grothendieck scheme $X_\Z$ to $X$, following Proposition \ref{prop:base_extensions_of_blue_schemes}. In particular, we obtain the functor $(\blanc)_\Z:\BSch\to\Sch$, which is a right-adjoint to $\fB:\Sch\to\BSch$.

 Note that the composition $(\blanc)_\Z\circ\iota_{\cM}:\Sch_\cM\to\Sch$ coincides with the base extension functor $\blanc\otimes_\Fun\Z$ in $\Fun$-geometry (cf.\ \cite[Section 1.1]{LL09b}). The same is true for $(\blanc)_\Z\circ\iota_{\cM_0}:\Sch_{\cM_0}\to\Sch$ (cf.\ \cite[Section 1.6]{LL09b}).

 Similar to the case of Grothendieck schemes, we define a \emph{semiring scheme} as a blue scheme that can be covered by spectra of semirings. Since every blueprint has a universal morphism $B\to B_\N$ to a semiring, every blue scheme $X$ has a universal morphism from a semiring scheme $X_\N$ to $X$. 

 Note that there are familiar objects occurring in literature under a different name. 

 To\"en and Vaqui\'e apply their general machinery of associating a category of schemes to a symmetric monoidal category (with some extra properties, see \cite{ToenVaquie08}) to the category of $\N$-modules. They call the objects in the resulting category $\N$-schemes, and it seems likely that their category of $\N$-schemes is equivalent to the category of semiring schemes. A proof of this is desirable.

 Durov defines generalized schemes in \cite{Durov07}, which is a generalization of scheme theory in a different direction by considering monads on a category. The resulting structures are called generalized rings, and it is possible to consider semirings as generalized rings. Therefore generalized schemes in Durov's sense cover semiring schemes.

 As already mentioned in the introduction, Mikhalkin defines tropical schemes in \cite{Mikhalkin}. I expect that the connection between tropical schemes and semiring schemes will be clarified by a rigorous theory of congruence schemes for blueprints.


\subsection{Connes and Consani's $\Fun$-schemes revised}
\label{section:CC-schemes}

 From the viewpoint on blueprints as multiplicative maps $f:A\to R$ from a monoid $A$ to a semiring $R$ (cf.\ Section \ref{section:subsets_of_semirings}), a familiarity with the concept of an $\Fun$-scheme in the sense of Connes and Consani (see \cite{CC09}) is visible. While blueprints are the fusion of a monoid with a semiring and a blue scheme is the geometric object associated to the glued algebraic object, Connes and Consani associate geometric objects to monoids (namely, $\cM_0$-schemes) and rings (namely, Grothendieck schemes) and combine two of those geometric objects to define an $\Fun$-scheme in their sense. Though the ideas are very similar in thought, the resulting theories are very different. The a posteriori combination of both ideas seems to be important for later applications to zeta functions.

 We review Connes and Consani's $\Fun$-schemes, for short, Connes-Consani schemes, within the framework of blue schemes. As usual in algebraic geometry, we denote by $X(k)$ the morphism set $\Hom(\Spec k, X)$ where $X$ is a blue scheme and $k$ a blueprint. Note that if $k$ is a ring, then every morphism $\Spec k\to X$ factors through $X_\Z$; thus $X(k)=X_\Z(k)$. A \emph{Connes-Consani scheme} is a morphism $\varphi:X_\Z\to Y$ where $X$ is an $\cM_0$-scheme and $Y$ a scheme such that the induced map $\varphi(k):X(k)=X_\Z(k)\to Y(k)$ is a bijection for all fields $k$. 

 The importance of Connes-Consani schemes lies in the theory of their zeta functions, which exhibit information about the number of $\F_q$-rational points (cf.\ \cite{CC09, L10, Soule04}). The concept of Connes and Consani will apply to the more general class of \emph{blue Connes-Consani schemes}, i.e.\ morphisms $\varphi:X_\Z\to Y$ of blue schemes where $X$ is an $\cM_0$-scheme and $X$ an arbitrary blue scheme such that the induces map $X(k)\to Y(k)$ is a bijection for all fields $k$. 

 It is an interesting problem how the combinatorial zeta functions of (blue) Connes-Consani schemes connect to the zeta functions of Grothendieck schemes. Is there a definition of a zeta function for blue schemes that reproduces both kinds of zeta functions?

\begin{small}

\end{small}


\begin{thebibliography}{10}
\addcontentsline{toc}{section}{References}

\bibitem{Berkovich11} V.~Berkovich. \newblock {\em Analytic geometry over $\mathbb{F}_1$}. \newblock Slides, 2011. Online at\\ {\tt http://www.wisdom.weizmann.ac.il/$\sim$vova/Padova-slides$\underline\ $2011.pdf}.
\bibitem{Borceux94} F.~Borceux. \newblock {\em Handbook of categorical algebra. 2}, volume~51 of {Encyclopedia of Mathematics and its Applications}. \newblock Cambridge University Press, Cambridge, 1994.
\bibitem{Borger09} J.~Borger. \newblock {\em {$\Lambda$}-rings and the field with one element}. \newblock Preprint, {\tt arXiv:0906.3146}, 2009.
\bibitem{CLS10} C.~Chu, O.~Lorscheid and R.~Santhanam. \newblock {\em Sheaves and {$K$}-theory for {$\mathbb{F}_1$}-schemes.} \newblock To appear in Adv. Math., arXiv:1010.2896, 2011.
\bibitem{Clifford-Preston1} A.~H. Clifford and G.~B. Preston. \newblock {\em The algebraic theory of semigroups. {V}ol. {I}}. \newblock Mathematical Surveys, No. 7. American Mathematical Society, Providence, R.I., 1961.
\bibitem{CC08} A.~Connes, C.~Consani. \newblock {\em On the notion of geometry over $\Fun$}. \newblock Preprint, {\tt arXiv: 0809.2926v2} [math.AG], 2008.
\bibitem{CC09} A.~Connes and C.~Consani. \newblock {\em Schemes over {$\mathbb F_1$} and zeta functions.} \newblock {Compos. Math.}, 146(6):1383--1415, 2010.
\bibitem{Deitmar05} A.~Deitmar. \newblock {\em Schemes over {$\mathbb F\sb 1$}.} \newblock In {Number fields and function fields---two parallel worlds}, volume 239 of {\em Progr. Math.}, pages 87--100. Birkh\"auser Boston, Boston, MA, 2005.
\bibitem{Deitmar11} A.~Deitmar. \newblock {\em Congruence schemes.} \newblock Preprint, 2011. Online at {\tt arXiv:1102.4046}.
\bibitem{Durov07} N.~Durov. \newblock {\em A New Approach to Arakelov Geometry}. \newblock PhD Thesis, 2007. Online at {\tt arXiv:0704.2030}.
\bibitem{Haran07} M. J. S.~Haran. \newblock {\em Non-additive geometry}. \newblock Compos. Math. Vol.143 618--688, 2007.
\bibitem{Haran09} M. J. S.~Haran. \newblock {\em Non-Additive Prolegomena}. \newblock Preprint, {\tt arXiv:0911.3522}, 2009.
\bibitem{Hartshorne} R.~Hartshorne. \newblock {\em Algebraic Geometry}. \newblock Grad. Texts in Math. 52, Springer-Verlag, New York, 1977. 
\bibitem{KapranovSmirnov} M.~Kapranov, A.~Smirnov.\newblock {\em Cohomology determinants and reciprocity laws: number field case}.\newblock  Unpublished preprint.
\bibitem{Kato94} K.~Kato. \newblock {\em Toric singularities}. \newblock Amer. J. Math., Vol. 116, No. 5, 1073--1099, 1994.
\bibitem{LaTorre65} D.~R. LaTorre. \newblock {\em On $h$-ideals and $k$-ideals in hemirings.} \newblock {Publ. Math. Debrecen}, 12:219--226, 1965.
\bibitem{Lescot09} P.~Lescot. \newblock {\em Alg\`ebre absolue}. \newblock Ann. Sci. Math. Qu\'ebec, Vol. 33, No. 1, 63--82, 2009.
\bibitem{LL09} J.~L\'opez Pe\~na and O.~Lorscheid. \newblock {\em Torified varieties and their geometries over {${\bf F}\sb 1$}.} \newblock {\em Math. Z.}, 267(3):605--643, 2011.
\bibitem{LL09b} J.~L\'opez Pe\~na and O.~Lorscheid. \newblock {\em Mapping $\mathbb{F}_1$-land: an overview of geometries over the field with one element.} \newblock To appear in Noncommutative Geometry, Arithmetic, and Related Topics. Johns Hopkins University press, {\tt arXiv:0909.0069}, 2009.
\bibitem{L09} O.~Lorscheid. \newblock {\em Algebraic groups over the field with one element}. \newblock To appear in Math. Z., Online at {\tt arXiv:0907.3824}, 2009.
\bibitem{L10} O.~Lorscheid. \newblock {\em Functional equations for zeta functions of {$\mathbb F_1$}-schemes}. \newblock C. R. Acad. Sci. Paris, Ser. I 348, 1143–-1146, 2010.
\bibitem{Manin95} Y.~Manin.\newblock {\em Lectures on zeta functions and motives (according to {D}eninger and {K}urokawa)}.\newblock  Astrisque  No. 228  (1995), 4, 121--163. 
\bibitem{Mikhalkin} G.~Mikhalkin \newblock {\em Tropical geometry}. \newblock Unpublished notes. Online at \\ {\tt http://www.scribd.com/doc/47771116/Tropical-geometry-Grigory-Mikhalkin}.
\bibitem{Paugam09} F.~Paugam. \newblock {\em Global analytic geometry.} \newblock {J. Number Theory}, 129(10):2295--2327, 2009.
\bibitem{Soule04} C.~Soul{\'e}. \newblock {\em Les vari\'et\'es sur le corps \`a un \'el\'ement.} \newblock {Mosc. Math. J.}, 4(1):217--244, 312, 2004.
\bibitem{Tits56} J.~Tits. \newblock {\em Sur les analogues alg\'ebriques des groupes semi-simples complexes.} \newblock In {Colloque d'alg\`ebre sup\'erieure, tenu \`a {B}ruxelles du 19 au 22 d\'ecembre 1956}, Centre Belge de Recherches Math\'ematiques, pages 261--289. \'Etablissements Ceuterick, Louvain, 1957.
\bibitem{ToenVaquie08} B.~To\"en and M.~Vaqui\'e. \newblock {\em Au-dessous de $Spec\; \mathbb{Z}$}. \newblock Journal of K-Theory 1--64, 2008.

\end{thebibliography}
\end{document}